\newtheorem{thm}{Theorem}[section]
\newtheorem{lemma}{Lemma}[section]
\newtheorem{prop}[lemma]{Proposition}
\newtheorem{cor}[lemma]{Corollary}
\newtheorem{remark}[lemma]{Remark}
\theoremstyle{remark}
\numberwithin{equation}{section}
\newcommand{\II}{I\hspace{-0.075cm}I}
\newcommand{\III}{I\hspace{-0.075cm}I\hspace{-0.075cm}I}
\def\esmath{\ensuremath}
\def\NN{\esmath\mathbb N} 
\def\RR{\esmath\mathbb R} 
\def\CC{\esmath\mathbb C} 
\def\HH{\esmath\mathbb H}
\def\XX{\esmath\mathbb X}
\def\EE{\esmath\mathbb E}
\def\GG{\esmath\mathbb G}
\def\fh{\esmath\mathfrak h}
\newcommand{\p}{\partial}
\newcommand{\bn}{\begin{enumerate}}
\newcommand{\en}{\end{enumerate}}
\newcommand{\bi}{\begin{itemize}}
\newcommand{\ei}{\end{itemize}}
\newcommand{\bqq}{\begin{eqnarray*}}
\newcommand{\eqq}{\end{eqnarray*}}
\newcommand{\balg}{\begin{align*}}
\newcommand{\ealg}{\end{align*}}
\newcommand{\limt}[2]{\lim\limits_{{#1}\to{#2}}}
\newcommand{\sums}[2]{\sum\limits_{#1}^{#2}}
\newcommand{\interior}[1]{\mbox{\raisebox{0.2ex}{$\stackrel{\circ}{#1}$}}}
\newcommand{\verteq}[0]{\rotatebox{90}{$=$}}
\DeclareMathOperator{\tr}{Tr}
\DeclareMathOperator{\spec}{Spec}
\DeclareMathOperator{\rc}{Ric}
\DeclareMathOperator{\Rm}{Rm}
\begin{document}
\title[Stability of complex hyperbolic space under Ricci flow]{Stability of complex hyperbolic space under curvature-normalized Ricci flow}
\author{Haotian Wu}
\address{Department of Mathematics, The University of Texas at Austin}
\keywords{Stability; Ricci flow; complex hyperbolic space; maximal regularity; interpolation spaces}
\subjclass[2000]{53C44, 58J37}
\email{hwu@math.utexas.edu}
\begin{abstract}
Using the maximal regularity theory for quasilinear parabolic systems, we prove two stability results of complex hyperbolic space under the curvature-normalized Ricci flow in complex dimensions two and higher. The first result is on a closed manifold. The second result is on a complete noncompact manifold. To prove both results, we fully analyze the structure of the Lichnerowicz Laplacian on complex hyperbolic space. To prove the second result, we also define suitably weighted little H\"{o}lder spaces on a complete noncompact manifold and establish their interpolation properties.
\end{abstract}
\maketitle
\section{Introduction}
The Ricci flow, introduced by Hamilton \cite{H82}, is an important nonlinear geometric evolution equation, and it can be viewed as a dynamical system on the space of Riemannian metrics modulo diffeomorphisms. It is therefore interesting to study the \emph{dynamical} (also called geometric) stability of Ricci flow solutions. In particular, if $g_0$ is a fixed point of the Ricci flow, one may ask whether the solution $\tilde g(t)$ converges for all initial data $\tilde g_0$ that are sufficiently close to $g_0$ in some appropriate topology.

This question has been addressed for compact flat and Ricci-flat solutions of Ricci flow by Guenther, Isenberg, and Knopf \cite{GIK02}. They proved the \emph{linear} stability of Ricci-flat metrics, i.e., that the spectrum of the elliptic differential operator in the linearized equation at a Ricci-flat metric has the proper sign. Then using the maximal regularity theory developed by Da Prato and Grisvard \cite{DPG79}, they concluded the presence of a center manifold in the space of Riemannian metrics and the dynamical stability for any metric whose Ricci flow converges to a flat metric. The same authors \cite{GIK06} demonstrated the linear stability of homogeneous Ricci solitons. The convergence and stability of locally $\RR^N$-invariant solutions of Ricci flow was obtained by Knopf \cite{Kno09}. Williams \cite{MW1012} generalized Knopf's results on the volume-rescaled $\RR^N$-locally invariant solutions and the curvature-normalized Ricci flow.

In \cite{Sesum06}, \v{S}e\v{s}um strengthened the results of \cite{GIK02}. In particular, she proved that the variational stability of Ricci flow, which is defined by the nonpositivity of the second variation of Perelman's $\mathcal F$-functional, together with an integrability condition imply the dynamical stability. As a consequence, she obtained the dynamical stability for K3-surfaces. Using the spin$^{c}$ structure, Dai, Wang, and Wei \cite{DWW07} showed that K\"{a}hler-Einstein metrics with non-positive scalar curvature are stable as the critical points of the total scalar curvature functional. Combining their results with \v{S}e\v{s}um's theorem, the authors established the dynamical stability of compact K\"{a}hler-Einstein manifolds with non-positive scalar curvature \cite{DWW07} .

The stability question has also been addressed for the normalized flows. Ye \cite{Ye93} proved when the real dimension $n\geq 3$ and the metric has nonzero sectional curvature, closed Ricci-pinched solutions (defined in \cite{Ye93}) of the volume-normalized Ricci flow converge to an Einstein metric. Li and Yin \cite{LY10} obtained stability of the hyperbolic metric on $\HH^n$ under the curvature-normalized Ricci flow in dimension $n\geq 6$ assuming that the perturbation is small and decays sufficiently fast at spatial infinity. Schn{\"u}rer, Schulze, and Simon \cite{SSS11} established stability of the hyperbolic metric on $\HH^n$ under the scaled Ricci harmonic map heat flow in dimension $n\geq 4$. The proofs in these papers rely on showing that a perturbed solution converges exponentially fast to a stationary solution in the $L^2$-norm.

More recently, Bamler \cite{Bam1004} proved that every finite volume hyperbolic manifold (possibly with cusps) in dimension $n\geq 3$ is stable under the curvature-normalized Ricci flow. Later, Bamler \cite{Bam1011} obtained more general stability results for symmetric spaces of noncompact type under the curvature-normalized Ricci flow. His results make use of an improved $L^1$-decay estimate for the heat kernel in vector bundles as well as elementary geometry of negatively curved spaces.

In this paper, we study the stability of complex hyperbolic space under the curvature-normalized Ricci flow following the approach in \cite{GIK02, GIK06, Kno09}. We refer the interested reader to \cite[Section 2]{Kno09} for a detailed introduction on the maximal regularity theory. For our purpose, we apply the theory in three steps:
\bn 
\item Modify the Ricci flow so that a complex hyperbolic space is a fixed point of the modified flow.
\item Linearize the modified flow at a complex hyperbolic space, and study the spectrum of the elliptic operator in the linearized equation.
\item Set up Banach spaces of tensor fields with good interpolation properties, and apply Simonett's Stability Theorem, cf. Theorem \ref{stab}.
\en

Let $(\mathbb{CH}^m, g_B)$ denote the complete noncompact complex $m$-dimensional hyperbolic space equipped with the Bergman metric $g_B$. $g_B$ is K\"{a}hler-Einstein and has constant holomorphic sectional curvature $-c$ ($c>0$). Let $(M^{n},g_0)$ denote a smooth closed quotient of $(\mathbb{CH}^m, g_B)$, $M^{n}$ has real dimension $n=2m$. Let $\NN$ be the set of positive integers. We can now state the main theorems with respect to the spaces introduced in Section 4. 

\begin{thm}\label{thm1}
Let $m\in\NN$, $m\geq 2$, and $n=2m$. For each $\rho\in(0,1)$, there exists $\eta\in(\rho,1)$ such that the following is true for $(M^{n},g_0)$.

There exists a neighborhood $\mathcal U$ of $g_0$ in the $\fh^{1+\eta}$-topology such that for all initial data $\tilde g(0)\in\mathcal U$, the unique solution $\tilde g(t)$ of the curvature-normalized Ricci flow \eqref{eq:mRF} exists for all $t\geq 0$ and converges exponentially fast in the $\fh^{2+\rho}$-norm to $g_0$.
\end{thm}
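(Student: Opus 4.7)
The overall strategy follows the three-step scheme set out in the introduction. First, I would choose the curvature-normalization constant in \eqref{eq:mRF} so that the K\"{a}hler--Einstein metric $g_0$ is a stationary point, and then break the diffeomorphism invariance by composing with the DeTurck trick using background metric $g_0$. This yields a strictly quasilinear parabolic system whose linearization at $g_0$ takes the form $\partial_t h = -A h$, where $A = \Delta_L + \text{(lower-order algebraic term)}$ acts on symmetric 2-tensors and $\Delta_L$ is the Lichnerowicz Laplacian of $g_0$. Any short-time solution of the DeTurck-modified flow can be pulled back by a smoothly varying diffeomorphism to recover a solution of \eqref{eq:mRF}, so it suffices to study the modified flow.

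The central analytic task, and the principal obstacle, is to show that $A$ is a sectorial operator on $\fh^\rho(M^n; \text{Sym}^2 T^*M)$ with domain $\fh^{2+\rho}$ and with spectrum $\sigma(A)\subset\{z\in\CC : \text{Re}\,z\geq\delta\}$ for some $\delta>0$. For this I would exploit the K\"{a}hler structure on $g_0$: using the complex structure $J$, symmetric 2-tensors decompose orthogonally and $\Delta_L$-invariantly into a Hermitian part (identifiable with real $(1,1)$-forms) and an anti-Hermitian part (identifiable with real parts of $(2,0)$-forms). On each summand, $\Delta_L$ can be rewritten as a K\"{a}hler-type Bochner Laplacian plus an algebraic term involving the Riemann curvature of $g_B$, which is fully explicit because $g_B$ has constant holomorphic sectional curvature $-c$. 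The hardest step is to compute the eigenvalues of the algebraic curvature endomorphism on each summand carefully enough that, after combining with the nonnegative Bochner Laplacian and the shift contributed by the curvature-normalization constant, one obtains the required strict positivity. The complication is that the trace and trace-free parts of the Hermitian piece do not fully diagonalize $\Delta_L$, so a careful bookkeeping of curvature contributions in complex dimension $m\geq 2$ is where the real analytic work lies.

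Once sectoriality and the spectral bound for $A$ are in hand, the conclusion follows from Simonett's Stability Theorem \ref{stab} applied in the framework of continuous interpolation spaces. On the closed manifold $M^n$ the little H\"{o}lder spaces $\fh^{k+\alpha}$ form a continuous interpolation scale, and the standard identities set up in Section 4 give $(\fh^\rho,\fh^{2+\rho})_\theta=\fh^{\rho+2\theta}$. Taking $X_0=\fh^\rho$, $X_1=\fh^{2+\rho}$, and the intermediate space $X_\beta=\fh^{1+\eta}$ with $\beta=(1+\eta-\rho)/2\in(1/2,1)$ for a suitable $\eta\in(\rho,1)$, the quasilinear nonlinearity arising from $\text{Ric}(\tilde g)$ and the DeTurck vector field depends smoothly on $\tilde g$ and its first derivatives, hence is locally Lipschitz from a neighborhood of $g_0$ in $X_\beta$ into $X_0$. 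All hypotheses of Theorem \ref{stab} are then verified, and the theorem delivers both long-time existence of $\tilde g(t)$ for all initial data sufficiently close to $g_0$ in $\fh^{1+\eta}$ and the claimed exponential convergence $\tilde g(t)\to g_0$ in $\fh^{2+\rho}$.
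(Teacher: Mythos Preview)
Your overall architecture is correct and matches the paper's: set up the curvature-normalized Ricci--DeTurck flow so that $g_0$ is a fixed point, establish strict linear stability of the linearized operator $A=\Delta_L-2\lambda$ at $g_0$, and then invoke Simonett's Stability Theorem in the little H\"older scale with $\XX_0=\fh^\rho$, $\XX_1=\fh^{2+\rho}$, and intermediate space $\fh^{1+\eta}$. The interpolation identifications and the final application of Theorem~\ref{stab} are exactly what the paper does.

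Where you diverge is in the linear stability step. You propose to split symmetric $2$-tensors via the complex structure into $J$-Hermitian and $J$-anti-Hermitian parts, identify these with real $(1,1)$-forms and real parts of $(2,0)$-forms, and then run a K\"ahler-type Weitzenb\"ock argument on each summand. The paper instead uses Koiso's Bochner identity, valid on any Einstein manifold, to reduce the estimate on $(Ah,h)$ to a pointwise bound on the algebraic curvature action $R_S$ on $\mathcal S^2$, and then computes the full matrix of $R_S$ in an explicit real orthonormal basis adapted to $J$ (the $\gamma_I,\gamma_{\II},\gamma_{\III}$ blocks), finding its largest eigenvalue to be exactly $c$. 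This yields $(Ah,h)\leq -\tfrac{m-1}{2}c\,\Vert h\Vert^2$. Your K\"ahler decomposition is a legitimate alternative and conceptually cleaner from the complex-geometric viewpoint; it would recover the same eigenvalue information, since on a space of constant holomorphic sectional curvature the curvature endomorphism does in fact respect the Hermitian/anti-Hermitian splitting and its further refinement by trace. The paper's route, by contrast, is entirely elementary linear algebra once the curvature components are tabulated, and it makes the sharpness of the bound (vanishing at $m=1$) immediately visible from the eigenvector $X=(1,\ldots,1)$ of the block $A_m$.
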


\begin{thm}\label{thm2}
Let $m\in\NN$, $m\geq 2$, and fix $\tau>m/2$. For each $\rho\in(0,1)$, there exists $\eta\in(\rho,1)$ such that the following is true for $(\mathbb{CH}^m, g_B)$.

There exists a neighborhood $\mathcal U$ of $g_B$ in the $\mathfrak h^{1+\eta}_\tau$-topology such that for all initial data $\tilde g(0)\in\mathcal U$, the unique solution $\tilde g(t)$ of the curvature-normalized Ricci-DeTurck flow \eqref{eq:mRDF} exists for all $t\geq 0$ and converges exponentially fast in the $\mathfrak h^{2+\rho}_\tau$-norm to $g_B$. 
\end{thm}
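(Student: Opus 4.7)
\textbf{Proof plan for Theorem \ref{thm2}.} The plan is to execute the three-step program outlined in the introduction in the noncompact setting. First, I would modify the curvature-normalized Ricci flow via the DeTurck trick using $g_B$ as the background metric, breaking diffeomorphism invariance and making $g_B$ a genuine stationary point of the modified equation \eqref{eq:mRDF}. Write a solution as $\tilde g = g_B + h$ with $h$ a symmetric $(0,2)$-tensor; the PDE becomes a quasilinear parabolic system of the schematic form $\partial_t h = -A h + F(h)$, where $A$ is the linearization at $g_B$ and $F$ collects terms at least quadratic in $h$ and $\nabla h$.

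Second, I would analyze the linear operator $A$. Because $g_B$ is K\"ahler--Einstein, the DeTurck-modified linearization reduces to the Lichnerowicz Laplacian $\Delta_L$ shifted by the constant coming from the curvature normalization. Using the orthogonal decomposition of symmetric $(0,2)$-tensors adapted to the complex structure on $(\CC\HH^m, g_B)$ into Hermitian and anti-Hermitian, trace and trace-free, and primitive parts, I would compute the scalar curvature contributions of the Weitzenb\"ock formula on each summand. Combined with the classical fact that the Laplace--Beltrami operator on functions on $\CC\HH^m$ has $L^2$-spectrum bounded below by $m^2$ (with the Bergman normalization), this diagonalization will show that $-A$ has $L^2$-spectrum contained in the open left half-plane on each component. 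The weight $\tau > m/2$ then multiplies the essential spectrum by an exponential factor whose Agmon-type shift keeps the spectrum of $-A$ in $\{\mathrm{Re}\, z < -\delta\}$ for some $\delta > 0$ when the operator is conjugated by the weight and realized on the weighted little H\"older spaces $\fh^{k+\alpha}_\tau$ constructed in Section 4.

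Third, I would verify the hypotheses of Simonett's stability theorem (Theorem \ref{stab}) using the continuous interpolation scale furnished by the spaces $\fh^{k+\alpha}_\tau$. The quasilinear structure is handled by showing that the nonlinearity $F$ is locally Lipschitz from $\fh^{2+\rho}_\tau$ into $\fh^\rho_\tau$, and that $A\colon \fh^{2+\rho}_\tau \to \fh^\rho_\tau$ has maximal regularity, which follows from the fact that it is a uniformly strongly elliptic operator of the Lichnerowicz type on a manifold of bounded geometry together with the interpolation identity $(\fh^\rho_\tau, \fh^{2+\rho}_\tau)_\eta = \fh^{2\eta+\rho}_\tau$ for appropriate $\eta$. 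With the spectral bound from Step 2 and these functional-analytic ingredients, Simonett's theorem yields a neighborhood $\cv \subset \fh^{1+\eta}_\tau$ of $g_B$ from which the modified flow exists for all $t \geq 0$ and decays exponentially to $g_B$ in the $\fh^{2+\rho}_\tau$-norm.

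\textbf{Main obstacle.} The hardest step is the combined work of Step 2 and Step 3 in the noncompact weighted setting: on a closed quotient, the spectrum of $\Delta_L$ is discrete and compactness arguments close the spectral gap automatically, whereas on $\CC\HH^m$ the Lichnerowicz Laplacian has continuous essential spectrum meeting the imaginary axis, and strict negativity must be forced by the exponentially weighted norm. Establishing that the weight $\tau > m/2$ is enough to shift \emph{every} piece of the tensor spectral decomposition strictly to the left, while simultaneously ensuring that the weighted little H\"older spaces $\fh^{k+\alpha}_\tau$ form a continuous interpolation scale on which $A$ has maximal regularity, is the technical heart of the proof and is the reason for defining these spaces from scratch in Section 4.
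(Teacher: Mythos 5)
Your overall three-step framework (DeTurck trick, linearization, Simonett) matches the paper's strategy, and you correctly identify that the weighted spaces and their interpolation properties are the technical heart. However, the mechanism you propose for the spectral gap in Step 2 is wrong, and it contradicts the paper in a substantive way.

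You assert that on $\mathbb{CH}^m$ the shifted Lichnerowicz Laplacian $A = \Delta_L - 2\lambda$ has continuous essential spectrum meeting the imaginary axis, and that the role of the weight $\tau > m/2$ is to produce an Agmon-type shift that moves the spectrum into the open left half-plane after conjugation. Neither claim is correct. The paper's Proposition \ref{spec}, which rests on Koiso's Bochner formula \eqref{eq:AR} combined with an explicit eigenvalue computation of the curvature operator $R_S$ in a real orthonormal frame adapted to $J$ (Lemmas \ref{R-gamma} and \ref{eigenvalue}, giving the largest eigenvalue of $R_\gamma$ as $c$), shows that $(Ah,h) \leq -\tfrac{m-1}{2}c\,\|h\|^2$ for all compactly supported $h$, with no appeal to the spectrum of the Laplace--Beltrami operator on functions. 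Since this quadratic-form estimate is uniform, the $L^2$ self-adjoint realization of $A$ already has spectrum bounded away from zero by $\tfrac{m-1}{2}c > 0$ when $m \geq 2$; there is no essential spectrum touching the imaginary axis and nothing for a weight to shift. The actual role of the condition $\tau > m/2$ is entirely different: it is exactly what is needed so that the volume growth $V(B_R) \lesssim e^{mR}$ yields the continuous embedding $\fh^{k+\alpha}_\tau \hookrightarrow W_1^2(\mathbb{CH}^m)$ (Lemma \ref{dom}), which in turn allows the $L^2$-pairing identity \eqref{eq:AR} to be extended from $\mathcal S^2_c$ to the entire weighted little H\"older space by density of $C_c^\infty$ in $W_1^2$ (Lemma \ref{eqholds} and Corollary \ref{eqholds2}). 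In other words, the weight is not a spectral-shifting device but an integrability device: it guarantees that elements of the function spaces you actually work in are $L^2$-tensors to which the compactly supported estimate applies.

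Your proposed tensor decomposition into Hermitian/anti-Hermitian, trace/traceless, and primitive parts is a different route from the paper's concrete real-frame block decomposition of $R_\gamma$ into $A_m, B_m, C_m$, and could plausibly be made to work with more representation-theoretic input, but you would still need the Bochner-type identity to absorb the rough Laplacian; as written, the reliance on the scalar Laplacian lower bound $m^2$ and the Agmon weight argument is a dead end here and should be replaced by the Bochner-plus-curvature-operator estimate. The remaining functional-analytic items you mention (sectoriality via interior Schauder estimates on the annuli $A_N$ after multiplying by $e^{N\tau}$, analyticity of $\hat g \mapsto L_{\hat g}$, and the interpolation identity $(\fh^\rho_\tau, \fh^{2+\rho}_\tau)_\eta \cong \fh^{2\eta+\rho}_\tau$) are consistent with the paper's Lemmas \ref{bddop1}, \ref{bddop2} and Theorem \ref{interp}, so the verification of Simonett's hypotheses (B1)--(B7) would proceed as you indicate once the spectral step is repaired.
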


We note that Theorem \ref{thm1} is a slight improvement of \cite[Theorem 1.6]{DWW07}, which has convergence in the $C^k$-norm ($k\geq 3$) for initial data close in the $C^k$-topology as in \cite{Sesum06}. Our method takes optimal advantage of the smoothing properties of the underlying quasilinear parabolic operator of Ricci flow in continuous interpolation spaces. The results in \cite{DWW07} and \cite{Sesum06} are more general and are obtained by different techniques.

One may also compare Theorem \ref{thm2} with the $\mathbb{CH}^m$ case in \cite[Theorem 1.2]{Bam1011}, which imposes stronger assumptions on the initial metric and weaker assumptions on the perburbations at spatial infinity, and proves convergence of the Ricci flow solutions to the fixed point in the pointed Cheeger-Gromov sense. In constrast, our initial metric is close in a less regular topology, and we impose stronger assumptions on the perturbations at spatial infinity to prove convergence in a considerably stronger sense.

A major effort of this paper is to define suitably weighted little H\"{o}lder spaces $\mathfrak h^{k+\alpha}_{\tau}$ on the complete noncompact $\mathbb{CH}^m$ and to establish the interpolation properties of these weighted spaces in all complex dimensions, cf. Theorem \ref{interp}. The interpolation results are then used in the application of the maximal regularity theory to prove the dynamical stability result, cf. Theorem \ref{thm2}.

The idea of defining weighted spaces is natural when the underlying manifold is complete noncompact and the growth rate of the volume of geodesic balls on this manifold can be estimated from above. If these weighted spaces satisfy interpolation properties, then one can prove stability results for Ricci flow on this complete noncompact manifold using the maximal regularity theory. For example, this approach works on $\RR^n$ because the interpolation theory on $\RR^n$ (with the background metric chosen to be the Euclidean metric) is well known, see for example \cite{Lun09}. The stability theorems for Ricci flow on $\RR^n$ obtained this way complement the existing results in the literature \cite{Wu93, OW07, SSS08, KL11}. One may also adapt the definition of the weighted spaces in this paper to the (noncompact) real hyperbolic space $\HH^n$ ($n\geq 3$) and establish interpolation properties accordingly. In this way one can obtain stability results that complement those in \cite{LY10, SSS11}.

The paper is organized as follows. In Section 2, we set up notations and recall the linearization formulae for the (curvature-normalized) Ricci flow. In Section 3, we study the spectrum of the elliptic operator in the linearized equation in all complex dimensions, and as a consequence, we obtain strict linear stability of the curvature-normalized Ricci-DeTurck flow at a complex $m$-dimensional hyperbolic space for $m\geq2$. In Section 4, we define the Banach spaces $\fh^{k+\alpha}$ and $\mathfrak h^{k+\alpha}_\tau$, and state the interpolation theorem for $\mathfrak h^{k+\alpha}_\tau$, cf. Theorem \ref{interp}. The proof of Theorem \ref{interp} is technical, so we postpone it to Section \ref{sect7} to improve the readability of the paper. In Sections 5 and 6, we prove Theorems \ref{thm1} and \ref{thm2} respectively. In Appendix \ref{appxA}, we include some computations. In Appendix \ref{appxB}, we include Simonett's Stability Theorem for completeness.

\subsection*{Acknowledgments} I sincerely thank my advisor Prof. Dan Knopf for his valuable advice, generous support, kind encouragement, and good humor during my learning and research. I also thank Prof. Justin Corvino for critiquing an earlier version of the manuscript, and Jiexian Li for helpful discussions.

\section{Preliminaries}
Let $(M^n,g)$ be an $n$-dimensional Riemannian manifold. $g$ is said to be negatively curved \emph{Einstein} if $\rc(g)=-\lambda g=\frac{R}{n}g$, where $\lambda>0$ is a constant. For an Einstein metric the traceless Ricci tensor $\interior\rc:=\rc-\frac{R}{n}g$ vanishes, so in this case the Riemann curvature tensor $\Rm$ admits the decomposition
\begin{align*}\Rm=\frac{R}{2n(n-1)} g\owedge g+W,\end{align*}
where $\owedge$ stands for the Kulkarni-Nomizu product, and $W$ is the Weyl curvature tensor.

When $n=2$ and $3$, $W\equiv 0$, so the Einstein condition implies $\Rm=\frac{R}{2n(n-1)} g\owedge g$, which is equivalent to $g$ having constant sectional curvature. Conversely, constant sectional curvature implies $\interior\rc$ and $W$ vanish, in all dimensions. When $n\geq 4$,  $W$ vanishes if and only if $(M,g)$ is locally conformally flat.

Let $U$ be an open set of $M^n$. When possible, we take $U=M^n$, e.g., when $M^n=\mathbb{CH}^m$. We denote by $\mathcal T^2$ the vector space of covariant two-tensor fields over $U$. We denote by $\mathcal S^2$ ($\mathcal S^2_c$, $\mathcal S^2_+$, respectively) the vector space of \emph{symmetric} covariant two-tensor fields (with compact support, positive-definite, respectively) over $U$, and let $\mathcal S_2$ be the dual of $\mathcal S^2$. In the rest of the paper, the regularity of the tensor fields will either be specified or dictated by the context. We let $\bigwedge_2$ denote the vector space of alternating two-vector fields over $U$. $\bigwedge_2$ has real dimension $\frac{n(n-1)}{2}$, and $\mathcal S^2$ has real dimension $\frac{n(n+1)}{2}$. Since $\bigwedge_2$ is isomorphic to the space of anti-symmetric covariant two-tensor fields over $U$, $\mathcal T^2\cong\mathcal S^2\oplus \bigwedge_2$. We define $\Omega^1$ to be the space of one-forms over $U$.

We denote by $\mathcal L$ the Lie derivative, by $\delta=\delta_g:\mathcal S^2\to \Omega^1$ the divergence operator (with respect to $g$), and by $\delta^\ast=\delta^\ast_g:\Omega^1\to\mathcal S^2$ the formal $L^2$-adjoint of $\delta$. $d\mu_g$ denotes the volume form of $g$, and we write $d\mu$ whenever there is no ambiguity. $\sharp:\Omega^1\to\Gamma(TM)$ is the duality isomorphism induced by $g$.  Throughout the paper, we use the Einstein summation convention.

We denote by $\langle\cdot,\cdot\rangle$ the tensor inner product with respect to $g$. Given an orthonormal frame field $\{e_i\}_{i=1}^n$ on $U$, we use the convention
\begin{align*}
\left\langle e_i\otimes e_j,e_k\otimes e_\ell\right\rangle=\delta_{ik}\delta_{j\ell}. 
\end{align*}
Define
\begin{align*}
e_i\wedge e_j := e_i\otimes e_j - e_j\otimes e_i,\quad e_ie_j:= e_i\otimes e_j+e_j\otimes e_i,
\end{align*}
then the set 
\begin{align*}\beta=\left\{\frac{1}{\sqrt 2}e_i\wedge e_j: 1\leq i<j\leq n \right\}\end{align*}
forms an orthonormal frame field of $\bigwedge_2$ over $U$, and the set
\begin{align*}
 \gamma=\left\{\frac{1}{2}e_ie_i,\; \frac{1}{\sqrt 2}e_ie_j:1\leq i<j\leq n\right\}
\end{align*}
forms an orthonormal frame field of $\mathcal S_2$ over $U$.

The Riemann curvature tensor $\Rm$ induces by its symmetries an action $R_{\bigwedge}:\bigwedge_2\to\bigwedge_2$ defined by
\begin{align*}
 \langle R_{\bigwedge}(e_i\wedge e_j), e_k\wedge e_\ell \rangle = 4R(e_i,e_j,e_\ell,e_k).
\end{align*}
$\Rm$ also induces an action $R_S:\mathcal S_2\to\mathcal S_2$ defined by 
\begin{align*}
 \langle R_{S} (e_i e_j), e_p e_q \rangle & = R(e_i,e_p,e_q,e_j)+R(e_j,e_p,e_q,e_i)\\
&\quad+R(e_i,e_q,e_p,e_j)+R(e_j,e_q,e_p,e_i).
\end{align*}
Since $\mathcal S^2$ is dual to $\mathcal S_2$, $R_S$ acts on $\mathcal S^2$ by the same action: if $h,k\in\mathcal S^2$, then
\begin{align*}
 \langle R_S(h),k\rangle = 4 R_{ipqj}h^{ij}k^{pq}.
\end{align*}
We can represent $R_{\bigwedge}$ by a matrix $R_\beta$ in the $\beta$-basis, and $R_{S}$ by a matrix $R_\gamma$ in the $\gamma$-basis. Both $R_{\bigwedge}$ and $R_S$ encode the curvature information.

We now recall some well-known facts about the Ricci flow, see for example \cite{GIK02}. The Ricci flow is a one-parameter family of Riemannian metrics $g(t)$, $0\leq t < T\leq\infty$, evolving by
\begin{align}\label{eq:RF}
\frac{\p}{\p t}g&=-2\rc(g).
\end{align}
For $h\in\mathcal S^2$ sufficiently differentiable, the linearized Ricci flow is given by
\begin{align}
\frac{\p}{\p t}h&=\Delta_L h +\mathcal L_{(\delta G(h))^\sharp} g,
\end{align}
where $G(h)=h-\frac{1}{2}(\tr_g h)g$, and $\Delta_L$ is the Lichnerowicz Laplacian. In local coordinates, we have
\begin{align}\label{eq:lichlap}
(\Delta_L h)_{ij}&=(\Delta h)_{ij}+2R_{ipqj}h^{pq}-R_i^kh_{kj}-R_j^kh_{ki},
\end{align}
where $\Delta h=g^{ij}\nabla_i\nabla_j h$ is the rough Laplacian of $h$.
The Ricci-DeTurck flow is
\begin{align}\label{eq:RDF}
\frac{\p}{\p t}g&=-2\rc(g)-P_u(g),
\end{align}
where $P_u(g):=-2\delta^\ast(\tilde u\delta(G(g,u)))$, with $u\in\mathcal S^2_+$, $G(g,u)=u-\frac{1}{2}(\tr_gu)g$, and $\tilde u:\Omega^1\to\Omega^1$ given by $(\tilde u \beta)_j := g_{jk}u^{k\ell}\beta_\ell$. The Ricci-DeTurck flow is strictly parabolic.

Since we are interested in the dynamics of Ricci flow near a complex hyperbolic space, compact or noncompact, we choose to study the curvature-normalized Ricci flow
\begin{align}\label{eq:mRF}
 \frac{\p}{\p t}g=-2(\rc(g)+\lambda g),
\end{align}
so then $(\mathbb{CH}^m,g_B)$ or $(M^n, g_0)$ becomes a fixed point of this modified flow.
$(\mathbb{CH}^m,g_B)$ or $(M^n, g_0)$ is also a fixed point of the curvature-normalized Ricci-DeTurck flow
\begin{align}\label{eq:mRDF}
\frac{\p}{\p t}g=-2(\rc(g)+\lambda g)-P_{g_B}(g).
\end{align}
The right hand side of \eqref{eq:mRDF} is a strictly elliptic operator on $g$. Alternatively, on a closed manifold, one may choose to normalize the volume of the manifold and study the volume-normalized Ricci flow as in \cite{Ye93}. 

\section{Linearization and linear stability}
\subsection{Linearization in any complex dimension} Let $m\in\NN$ and consider $(\mathbb{CH}^m, g_B)$. Recall that $g_B$ is Einstein, $\rc(g_B)=-\lambda g_B$ ($\lambda >0$), and $g_B$ has constant holomorphic sectional curvature $-c$ ($c>0$). In particular, $(\mathbb{CH}^m, g_B)$ is a Riemannian manifold of real dimension $n=2m$ with quarter-pinched sectional curvature $K\in [-c,-c/4]$. The results in this section remain valid for any smooth closed quotient $(M^n,g_0)$ of $(\mathbb{CH}^m, g_B)$.

For $h\in\mathcal S^2$, the linearization of equation \eqref{eq:mRDF} at $g_B$ is
\begin{align}
\frac{\p}{\p t} h =  \Delta_L h-2\lambda h
\end{align}
by \cite[Proposition 3.2]{GIK02}.

Define the linear operator $A:\mathcal S^2\to\mathcal S^2$ by
\begin{align}
 A h := \Delta_L h-2\lambda h.
\end{align}
Equation \eqref{eq:lichlap} implies that $A$ is the rough Laplacian plus zero-order terms, so $A$ is a strictly elliptic and self-adjoint operator on $L^2(\mathcal S_c^2)$. Since $g_B$ is Einstein, the Ricci tensor (after raising one index) acts on $(1,1)$-tensors with eigenvalues $-\lambda$. So we have
\begin{align*}(Ah)_{ij}=(\Delta h)_{ij}+2R_{ipqj}h^{pq}.\end{align*}

For now we denote by $(M,g)$ an arbitrary complete Einstein manifold. $M$ is either noncompact, e.g., $(\mathbb{CH}^m,g_B)$, or closed, e.g., $(M^n,g_0)$. We let $(\cdot,\cdot)$ be the $L^2$-pairing on $S^2_c$ defined by
\begin{align*}
 (h,k) = \int_M\langle h, k\rangle d\mu.
\end{align*}
\begin{remark}
 Recall our convention that $\left\langle e_i\otimes e_j,e_k\otimes e_\ell\right\rangle = \delta_{ik}\delta_{j\ell}$, then 
\begin{align*}
 (h,k) = \int_M \langle h^{ij}e_ie_j,k^{pq}e_pe_q\rangle d\mu = 4\int_M h^{ij} k_{ij}d\mu.
\end{align*}
\end{remark}

For $h\in\mathcal S^2_c$ that is sufficiently differentiable, we have
\begin{align*}
(Ah,h)= & \int_{M}  \left\langle \Delta h, h\right\rangle d\mu+8\int_{M} R_{ipqj}h^{pq}h^{ij}d\mu\\
 = & \int_{M}  \left\langle \Delta h, h\right\rangle d\mu+2\int_{M} \langle R_S(h), h \rangle d\mu.
\end{align*}
Then we integrate by parts to get
\begin{align}\label{eq:AR0} 
(Ah,h) =& -\int_{M} \left\langle\nabla h,\nabla h\right\rangle d\mu + 2\int_{M} \langle R_S(h), h\rangle d\mu.
\end{align}

We let $|h|^2:=\langle h,h\rangle$, $\Vert h\Vert^2:=(h,h)$; for a function $f$, $\Vert f\Vert^2$ is its $L^2$-norm.
\begin{lemma}
On a negatively curved Einstein manifold $(M,g)$ with $\rc(g)=-\lambda g$ ($\lambda>0$), given $h\in\mathcal S^2_c$, define a covariant three-tensor by 
\begin{align*}
T_{ijk}:=\nabla_kh_{ij}-\nabla_ih_{jk}. 
\end{align*}
Then
\begin{align*}\Vert \nabla h\Vert^2=\frac{1}{2}\Vert T \Vert^2+\Vert\delta h \Vert^2+\lambda\Vert h\Vert^2+\int_{M} \langle R_S(h), h\rangle d\mu.\end{align*}
\end{lemma}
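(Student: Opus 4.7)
The identity is of Bochner--Weitzenb\"ock type. The plan is: (i) rewrite $|\nabla h|^2$ pointwise in terms of $|T|^2$ plus a cross term; (ii) integrate and use integration by parts together with the Ricci commutation identity to convert the cross term into $\|\delta h\|^2$ plus a Ricci contraction and a Riemann contraction; and (iii) invoke the Einstein condition to collapse the Ricci contraction into $\lambda\|h\|^2$, while identifying the Riemann contraction as $\int_M\langle R_S(h),h\rangle\,d\mu$ via the four-term definition of $R_S$ from Section~2.

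First I would expand
\[
|T|^2=(\nabla_k h_{ij}-\nabla_i h_{jk})(\nabla^k h^{ij}-\nabla^i h^{jk}).
\]
Two of the four resulting terms each equal $|\nabla h|^2$ (one directly, one after relabeling dummy indices), while the two cross terms coincide after relabeling and using $h_{ij}=h_{ji}$. This gives the pointwise identity
\[
|\nabla h|^2=\tfrac12|T|^2+(\nabla_k h_{ij})(\nabla^i h^{jk}).
\]
Integrating over $M$ contributes $\tfrac12\|T\|^2$ on the right; since $h\in\mathcal S^2_c$, I may integrate by parts in the cross term by viewing $V^k=h_{ij}\nabla^i h^{jk}$ as a compactly supported vector field and applying the divergence theorem, which yields $-\int_M h_{ij}\,\nabla_k\nabla^i h^{jk}\,d\mu$.

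Next I commute the two covariant derivatives: $\nabla_k\nabla^i h^{jk}=\nabla^i\nabla_k h^{jk}+[\nabla_k,\nabla^i]h^{jk}$. The first summand reduces to $\nabla^i(\delta h)^j$ up to a sign from the definition of $\delta$, and a second integration by parts, combined with $\nabla^i h_{ij}=-(\delta h)_j$, converts the resulting integral into $\|\delta h\|^2$. The commutator, by the Ricci identity for a $(2,0)$-tensor, contributes two Riemann curvature contractions with $h^{jk}$. One of these traces Riemann over the repeated index $k$, producing a Ricci tensor times $h$; under the Einstein hypothesis $\rc(g)=-\lambda g$ this collapses to a scalar multiple of $h^{ij}$, and pairing with $h_{ij}$ and integrating contributes $\lambda\|h\|^2$. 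The remaining Riemann term, after renaming dummies and using the symmetry of $h$ to pool equal contributions, matches the four-term index pattern that defines $\langle R_S(h),h\rangle$ in Section~2 and hence assembles into $\int_M\langle R_S(h),h\rangle\,d\mu$. Collecting all four contributions gives the claimed identity.

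I expect the main obstacle to be bookkeeping of signs and normalization factors. The sign of the Ricci commutator term must be reconciled with the Riemann sign convention implicit in Section~2 (so that the Einstein condition yields $+\lambda\|h\|^2$ and not $-\lambda\|h\|^2$), and the coefficient $4$ appearing in $\langle R_S(h),k\rangle=4R_{ipqj}h^{ij}k^{pq}$ must be matched against the factor relating the pointwise inner product on $\mathcal S_2$ to the raw index sum $h^{ij}k_{ij}$, so that the Riemann contraction appears with coefficient one. All other steps are routine covariant-derivative manipulations using only the compact support of $h$ and the Ricci identity.
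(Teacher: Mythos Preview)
Your proposal is correct and follows essentially the same route as the paper's own proof: expand $\|T\|^2$ to isolate the cross term $\int_M \nabla_k h_{ij}\nabla^i h^{jk}\,d\mu$, integrate by parts, commute derivatives via the Ricci identity, and split the result into the divergence piece, the Ricci piece (collapsed by the Einstein condition), and the Riemann piece identified with $\langle R_S(h),h\rangle$. Your closing remarks about matching the factor $4$ in the paper's inner-product convention $(h,k)=4\int_M h^{ij}k_{ij}\,d\mu$ against the definition of $R_S$ are exactly the bookkeeping the paper carries out, so nothing further is needed.
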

\noindent This is Koiso's Bochner formula \cite{Koi79}, and we include its proof here for completeness.
\begin{proof} $T_{ijk}:=\nabla_k h_{ij}-\nabla_ih_{jk}$, then
\begin{align*}
\Vert T\Vert^2&=\int_{M}4\left\langle \nabla_k h_{ij}-\nabla_i h_{jk},\nabla^k h^{ij}-\nabla^i h^{jk}\right\rangle d\mu\\
&=2\Vert\nabla h\Vert^2-8\int_{M}\nabla_kh_{ij}\nabla^ih^{jk}d\mu.
\end{align*}
We integrate by parts and commute the covariant derivatives to obtain
\begin{align*}
-4\int_{M}\nabla_kh_{ij}\nabla^ih^{jk}d\mu&=4\int_{M}h^i_{j}\nabla_k\nabla_ih^{jk}d\mu\\
&=4\int_{M}h^i_j(\nabla_i\nabla_kh^{jk}+R^j_{kip}h^{pk}+R^k_{kip}h^{jp})d\mu\\
&=-\Vert\delta h \Vert^2+4\int_{M} h^i_jR_{ip}h^{jp}d\mu+4\int_{M} h^i_jR^j_{kip}h^{pk}d\mu\\
&=-\Vert\delta h \Vert^2-\lambda\Vert h\Vert^2-\int_{M} \langle R_S(h), h\rangle d\mu.
\end{align*}
Rearranging the terms proves the lemma. \end{proof}

Therefore, equation \eqref{eq:AR0} becomes
\begin{align}\label{eq:AR}
(Ah, h)&=-\frac{1}{2}\Vert T \Vert^2-\Vert\delta h \Vert^2-\lambda\Vert h\Vert^2+\int_{M} \langle R_S(h), h\rangle d\mu.
\end{align}
\begin{remark}\label{realhyp}
Applying equation \eqref{eq:AR} on a closed Riemannian manifold of dimension $n\geq 3$ and constant sectional curvature $K=-1$ (after normalizing the metric), then
\begin{align*}\int_{M} \langle R_S(h), h\rangle d\mu=4\int_{M} -(g_{ij}g_{pq}-g_{iq}g_{pj})h^{pq}h^{ij}d\mu=
-\Vert\tr_g h\Vert^2+\Vert h\Vert^2.\end{align*}
Since in this case $\lambda = n-1$, then for $h\not\equiv 0$, equation \eqref{eq:AR} implies
\begin{align*}(Ah,h)\leq -(n-2)\Vert h\Vert^2.\end{align*}
Thus, the curvature-normalized Ricci flow is strictly linearly stable at a closed negatively curved space form. This was proved in \cite[Appendix A]{KY09}.
\end{remark}
\begin{remark}\label{teich}
In Remark \ref{realhyp}, the dimension assumption $n\geq 3$ is crucial for linear stability. On a real two-dimensional (complex one-dimensional) surface of genus $\gamma>1$, however, the operator $A$ is \emph{not} strictly linearly stable. $A$ has a nullspace of complex dimension $3\gamma-3$, which is isomorphic to the space of holomorphic quadratic differentials, hence with the cotangent space to the Teichm\"{u}ller space.
\end{remark}

\subsection{Strict linear stability in complex dimensions two and higher}\label{subsect3.2}
Let $m\in\NN$. Let $U$ be the single geodesic normal coordinate chart covering $\mathbb{CH}^m$ (or one of a finite atlas of coordinate charts covering $M^n$). We fix an orthonormal frame field $\{e_i,e_{i+1}: i=2k-1, k=1,2,\ldots,m\}$ over $U$ such that the complex structure $J$ acts on this frame field by
\begin{align*}
 J: \{e_i,e_{i+1}\}\mapsto \{e_{i+1},-e_i\},\text{ for each } i=2k-1,\; k=1,2,\ldots,m.
\end{align*}
We abuse the notation and define the action $J$ on the indices by
\begin{align*}
 J(s)=t, \text{ if } J(e_s) = e_t \text{ or }J(e_s) = -e_t.
\end{align*}
For example, $J(1)=2$, $J(2)=1$.

We now define a canonical frame field $\gamma$ for $\mathcal S_2$ of real dimension $m(2m+1)$ in three groups, denoted by $\gamma_I$, $\gamma_{\II}$, and $\gamma_{\III}$, respectively. We define 
\begin{align*}
 \gamma_I^i & : =  \frac{1}{2} e_ie_i,\quad i=1,2,\ldots,2m;\\
 \gamma_{\II}^j & : = \frac{1}{\sqrt{2}} e_{2j-1} e_{2j},\quad j=1,2,\ldots,m.\\
\end{align*}
We define $2m(m-1)$ basis elements of $\gamma_{\III}$ by
\begin{align*}
\left\{\begin{array}{ccc}
 \gamma^p_{\III} &:=&\frac{1}{\sqrt 2} e_s e_t,\\
 \gamma^{p+1}_{\III} &:=&\frac{1}{\sqrt 2} e_{J(s)} e_{J(t)},
\end{array}\right.
 \text{ if }  1\leq s<t\leq m,\text{ and } J(s)\neq t,
\end{align*}
which for convenience we index by $p=1,3,5,\ldots,2m(m-1)-1$. For example, $\gamma_{\III}^1=\frac{1}{\sqrt2}e_1e_3,\gamma_{\III}^2=\frac{1}{\sqrt2}e_2e_4$; $\gamma_{\III}^3=\frac{1}{\sqrt2}e_1e_4,\gamma_{\III}^4=\frac{1}{\sqrt2}e_2e_3$; $\gamma_{\III}^5=\frac{1}{\sqrt2}e_1e_5,\gamma_{\III}^6=\frac{1}{\sqrt2}e_2e_6$, etc.

We define three matrices $A_m$, $B_m$, and $C_m$. $A_m$ is a $2m\times 2m$ matrix given by
\begin{align*}
A_m = \left(
\begin{array}{ccccccccc}
D & E & \cdots & \cdots &  E   \\
E & D & E & \cdots & \vdots   \\
\vdots & E & \ddots &\cdots &\vdots \\
\vdots & \vdots & \vdots & \ddots&E\\
E  &\cdots& \cdots & E & D
\end{array}
\right), 
\end{align*}
where
\begin{align*}
 D = \left(\begin{array}{cc}
            0 & 4 \\
	    4 & 0 \\
           \end{array}
 \right), \quad 
E = \left(\begin{array}{cc}
            1 & 1 \\
	    1 & 1 \\
           \end{array}
 \right).
\end{align*}
$B_m$ is the $m\times m$ diagonal matrix whose diagonal entries are $-4$. $C_m$ is a $2m(m-1)\times 2m(m-1)$ matrix and it is block diagonal given by
\begin{align*}
C_m = \left(
\begin{array}{ccccc}
  F  &     &   & \\
     &  F  &   &  \\
     &     & \ddots &\\
     &     &       & F
\end{array}
\right), 
\end{align*}
where
\begin{align*}
 F = \left(\begin{array}{cccc}
            -1 & 3  & &\\
	     3 & -1 & &\\
	       &    & -1 & -3\\
	       &    & -3 & -1
           \end{array}
 \right).
\end{align*}
Then we have the following lemma.
\begin{lemma}\label{R-gamma}
For each $m\in\NN$, $R_S$ is represented in the canonical $\gamma$-basis by the block diagonal matrix $R_\gamma$ with
\begin{align*}
 R_{\gamma}=-\frac{c}{4}\left(
\begin{array}{ccc}
A_m &      &    \\
    & B_m  &     \\
    &      & C_m
\end{array}
\right).
\end{align*}
\end{lemma}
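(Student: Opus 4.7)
The plan is to substitute the explicit form of the Bergman curvature tensor into the definition of $R_S$ and to read off the matrix entries directly in the canonical frame. Since $(\mathbb{CH}^m,g_B)$ has constant holomorphic sectional curvature $-c$, one has the closed-form expression
$$R_{ijkl}=-\frac{c}{4}\bigl[\delta_{il}\delta_{jk}-\delta_{ik}\delta_{jl}+J_{jk}J_{il}-J_{ik}J_{jl}-2J_{ij}J_{kl}\bigr],$$
where $J_{ij}:=g_B(Je_i,e_j)$ is skew-symmetric and, in our $J$-adapted frame, equals $\pm 1$ precisely on the $J$-pairs $\{2k-1,2k\}$ and vanishes elsewhere. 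Inserting this into the definition of $R_S$ yields the master formula
$$\langle R_S(e_ie_j),e_pe_q\rangle=R_{ipqj}+R_{jpqi}+R_{iqpj}+R_{jqpi},$$
which becomes a polynomial in the $\delta$'s and $J$'s to which all subsequent computations reduce.

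The first task is to verify that $R_S$ preserves the orthogonal decomposition of $\mathcal S_2$ into the spans of $\gamma_I$, $\gamma_{\II}$, and $\gamma_{\III}$. This is a purely combinatorial check: the three groups are distinguished by whether the index pair $(s,t)$ with $s\leq t$ in a basis element $\sim e_se_t$ has $s=t$, or $s<t$ with $J(s)=t$, or $s<t$ with $J(s)\neq t$. A case analysis on which of the $\delta\delta$, $JJ$, and $-2JJ$ contributions in the master formula can be nonzero shows that the output always lies in the same subspace as the input, which gives the claimed block-diagonal structure.

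Next, one computes the entries in each block. Setting $i=j$ and $p=q$ in the master formula gives
$$\langle R_S(\gamma_I^i),\gamma_I^j\rangle=-\frac{c}{4}\bigl[1-\delta_{ij}+3J_{ij}^2\bigr],$$
which equals $0$ on the diagonal, $-c$ when $j=J(i)$, and $-\tfrac{c}{4}$ otherwise; this reproduces $-\tfrac{c}{4}A_m$. For $\gamma_{\II}$, the master formula (after exploiting the skew-symmetry of $J$ on each $J$-pair) simplifies to $\langle R_S(\gamma_{\II}^j),\gamma_{\II}^k\rangle=-c\,\delta_{jk}$, reproducing $-\tfrac{c}{4}B_m$. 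For $\gamma_{\III}$, fix an unordered ``complex pair'' $(a,b)$ with $a<b$ in $\{1,\dots,m\}$; this gives four real index pairs $(2a-1,2b-1)$, $(2a,2b)$, $(2a-1,2b)$, $(2a,2b-1)$ and four associated basis elements. The vanishing of $J_{ij}$ off of $J$-pairs decouples the first two basis elements from the last two, and expanding the master formula yields a $4\times 4$ block equal to $F$.

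The technical heart of the argument is the $\gamma_{\III}$ block, specifically showing that the off-diagonal entries within each $2\times 2$ sub-block of $F$ have magnitude $3$ and that the sign flips between the two sub-blocks. The magnitude $3$ arises from combining the $-2J_{ij}J_{kl}$ term, which contributes $\pm 2$ from the holomorphic sectional curvature, with the $J_{ik}J_{jl}$ and $J_{jk}J_{il}$ terms, which together contribute $\pm 1$ with matching sign. The sign flip between the two sub-blocks of $F$ reflects that swapping the roles of $e_t$ and $e_{J(t)}$ in the basis ordering inverts a single factor of $J$ in the $-2J_{ij}J_{kl}$ term. Careful bookkeeping of the sign conventions $J_{2k-1,2k}=+1$ versus $J_{2k,2k-1}=-1$ is the only nontrivial part of the computation.
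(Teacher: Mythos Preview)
Your approach is correct and is essentially the same as the paper's: both compute the entries of $R_\gamma$ by direct substitution of the Bergman curvature tensor into the definition of $R_S$. The only real difference is packaging---you work from the closed-form expression $R_{ijkl}=-\tfrac{c}{4}[\delta_{il}\delta_{jk}-\delta_{ik}\delta_{jl}+J_{jk}J_{il}-J_{ik}J_{jl}-2J_{ij}J_{kl}]$ for constant holomorphic sectional curvature, whereas the paper first tabulates the nonzero components of $\Rm$ via O'Neill's formula and the K\"ahler identity $R(JX,JY,Z,W)=R(X,Y,Z,W)$, then plugs those into $R_S$ entry by entry. Your route is a bit more systematic and makes the block-diagonal structure transparent from the combinatorics of the $\delta$'s and $J$'s; the paper's route is more hands-on but identical in substance.

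One slip to fix: in the $\gamma_{\II}$ block you state $\langle R_S(\gamma_{\II}^j),\gamma_{\II}^k\rangle=-c\,\delta_{jk}$, but the correct value is $+c\,\delta_{jk}$ (as the paper's sample computation $\langle R_\gamma(e_1e_2/\sqrt{2}),e_1e_2/\sqrt{2}\rangle=c$ confirms), consistent with $-\tfrac{c}{4}B_m$ having diagonal entries $-\tfrac{c}{4}\cdot(-4)=c$. This is only a sign typo and does not affect the method or the final block form.
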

\begin{proof}
 See Appendix \ref{appxA}.
\end{proof}

\begin{lemma}\label{eigenvalue}
For each $m\in\NN$, the largest eigenvalue of $R_\gamma$ is $c$.
\end{lemma}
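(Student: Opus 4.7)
Since $R_\gamma$ is block diagonal, its spectrum is the union of the spectra of the three blocks $-\tfrac{c}{4}A_m$, $-\tfrac{c}{4}B_m$, and $-\tfrac{c}{4}C_m$. I will compute each spectrum separately and then take the maximum.

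The blocks $B_m$ and $C_m$ are the easy ones. Since $B_m=-4I_m$, the block $-\tfrac{c}{4}B_m$ has a single eigenvalue $c$ of multiplicity $m$. Since $C_m$ is block diagonal with $\tfrac{m(m-1)}{2}$ copies of the $4\times 4$ matrix $F$, and $F$ is itself block diagonal with the $2\times 2$ blocks $\left(\begin{smallmatrix}-1 & 3\\ 3 & -1\end{smallmatrix}\right)$ and $\left(\begin{smallmatrix}-1 & -3\\ -3 & -1\end{smallmatrix}\right)$, each $2\times 2$ block has eigenvalues $2$ and $-4$. Hence $-\tfrac{c}{4}C_m$ has eigenvalues $-\tfrac{c}{2}$ and $c$, each with multiplicity $m(m-1)$.

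The main step is handling $A_m$. The key observation is the tensor product decomposition
\[
A_m = I_m \otimes (D-E) + J_m \otimes E,
\]
where $J_m$ is the all-ones $m\times m$ matrix. A direct computation shows that $D-E$ and $E$ are simultaneously diagonalized by the orthogonal basis $\{(1,1)/\sqrt 2,\,(1,-1)/\sqrt 2\}$: one finds that $D-E$ has eigenvalues $2,-4$ and $E$ has eigenvalues $2,0$ on these respective eigenvectors. After conjugating $A_m$ by $I_m\otimes P$ (where $P$ is the $2\times 2$ orthogonal matrix with these columns) and reordering the basis, $A_m$ becomes similar to the block diagonal matrix
\[
(2I_m + 2J_m)\;\oplus\;(-4I_m).
\]
Since $J_m$ has eigenvalues $m$ (simple) and $0$ (multiplicity $m-1$), the spectrum of $A_m$ is $\{2+2m,\,2,\,\ldots,\,2,\,-4,\,\ldots,\,-4\}$ with multiplicities $1$, $m-1$, and $m$. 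Consequently $-\tfrac{c}{4}A_m$ has eigenvalues $-\tfrac{c(m+1)}{2}$, $-\tfrac{c}{2}$ (multiplicity $m-1$), and $c$ (multiplicity $m$).

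Combining the three blocks, the full spectrum of $R_\gamma$ consists of $c$, $-\tfrac{c}{2}$, and $-\tfrac{c(m+1)}{2}$ with various multiplicities. Since $c>0$, the largest eigenvalue is $c$. The only real obstacle in this argument is recognizing the tensor product structure of $A_m$ and the simultaneous diagonalizability of $D-E$ and $E$; once that is in place, the rest is routine linear algebra.
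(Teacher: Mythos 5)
Your argument is correct. The treatment of the blocks $B_m$ and $C_m$ is identical to the paper's (read off the eigenvalues of $-4I_m$ and of $F$). The only place you diverge is in the analysis of $A_m$: the paper simply exhibits the explicit eigenvectors $X = \mathbf{1}_m$, $Y_i$, $Z_i$ in $\RR^{2m}$ and verifies $A_mX = 2(m+1)X$, $A_mY_i = 2Y_i$, $A_mZ_i = -4Z_i$ by direct computation, whereas you recognize the Kronecker structure $A_m = I_m\otimes(D-E) + J_m\otimes E$, simultaneously diagonalize $D-E$ and $E$ via the $\pm$ basis of $\RR^2$, and reduce the problem to the spectrum of the all-ones matrix $J_m$. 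The two are really the same calculation under the hood (the paper's $X$ is $\mathbf{1}_m\otimes(1,1)^T$, the $Y_i$ span the kernel of $J_m$ tensored with $(1,1)^T$, and the $Z_i$ are $e_i\otimes(-1,1)^T$), but your version is more structural and explains where the paper's eigenvectors come from; the paper's is more elementary to verify but gives no hint of why those vectors work. Either way, the resulting spectrum $\{2(m+1),\,2\ (m{-}1\text{ times}),\,-4\ (m\text{ times})\}$ for $A_m$ matches, and after scaling by $-c/4$ the maximum over all three blocks is $c$, as required.
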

\begin{proof}
$R_\gamma$ is a block diagonal matrix, so its eigenvalues are $-\frac{c}{4}$ times the eigenvalues of $A_m$, $B_m$, and $C_m$ \cite{HJ85}. The eigenvalues of $B_m$ are $-4$. The matrix $C_m$ is block diagonal, so its eigenvalues are given by the eigenvalues of $F$, which are $\{2,2,-4,-4\}$. It remains to understand the eigenvalues of $A_m$.

We write vectors in $\RR^{2m}$ as column vectors. Define $X\in\RR^{2m}$ by
\begin{align*}
X^T = \; (\underbrace{1,1,\cdots,1,1}_{2m\text{ of them}}).
\end{align*}
Define $Y_i\in\RR^{2m}$, $i=1,2,\ldots,m-1$, by
\begin{align*}
Y^T_{i} =\; (-1,-1,\underbrace{0,\cdots,0}_{\text{all }0},\underbrace{1}_{(2i+1)\text{-th entry}},1,\underbrace{0,\cdots,0}_{\text{all }0}).
\end{align*}
Define $Z_i\in\RR^{2m}$, $i=1,2,\ldots,m$, by
\begin{align*}
Z^T_{i} =\; (\underbrace{0,\cdots,0}_{\text{all }0},\underbrace{-1}_{(2i-1)\text{-th entry}},1,\underbrace{0,\cdots,0}_{\text{all }0}).
\end{align*}
Then by a direct computation, we have
\begin{align*}
A_m X & =  2(m+1)X;\\
A_m Y_i & =  2 Y_i,\quad i=1,2,\ldots, m-1;\\
A_m Z_i & =  -4 Z_i,\quad i=1,2,\ldots, m.
\end{align*}
Thus, the eigenvalues of $A_m$ are
\begin{align*}
 \{2(m+1),\underbrace{2,\cdots,2}_{(m-1)\text{ of them }},\underbrace{-4,\cdots,-4}_{m\text{ of them }} \}.
\end{align*}

The lemma follows after multiplying the eigenvalues of $A_m$, $B_m$, and $C_m$ by $-\frac{c}{4}$.\end{proof}

Therefore, we have the following estimate.
\begin{prop}\label{spec}
Let $m\in\NN$. For all $h\in\mathcal S^2_c \setminus\{0\}$,
\begin{align}\label{eq:ineq}
(Ah,h)\leq -\frac{m-1}{2}c\Vert h\Vert^2, \quad c>0.
\end{align}
In particular, the curvature-normalized Ricci-DeTurck flow \eqref{eq:mRDF} is strictly linearly stable at $(\mathbb{CH}^m,g_B)$ or $(M^n,g_0)$ when $m\geq 2$.
\end{prop}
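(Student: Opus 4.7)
The plan is to combine Koiso's Bochner identity \eqref{eq:AR} with the pointwise spectral bound on $R_S$ coming from Lemmas \ref{R-gamma} and \ref{eigenvalue}, and then substitute the explicit Einstein constant of the Bergman metric.

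First I would drop the two manifestly nonpositive terms $-\tfrac12\Vert T\Vert^2$ and $-\Vert\delta h\Vert^2$ from \eqref{eq:AR}, yielding
$$(Ah,h)\;\le\;-\lambda\Vert h\Vert^2+\int_M\langle R_S(h),h\rangle\,d\mu.$$
Since the canonical frame $\gamma$ is orthonormal for $\mathcal S_2$, pointwise expansion $h=\sum_\alpha h^\alpha\gamma^\alpha$ gives $|h|^2=\sum_\alpha(h^\alpha)^2$ and $\langle R_S(h),h\rangle=h^T R_\gamma h$. Lemma \ref{eigenvalue} then yields $\langle R_S(h),h\rangle\le c\,|h|^2$ at every point, so $\int_M\langle R_S(h),h\rangle\,d\mu\le c\Vert h\Vert^2$.

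Next I would substitute the Einstein constant of $g_B$. Because $g_B$ has constant holomorphic sectional curvature $-c$, a standard computation---summing the sectional curvatures $K(e_1,Je_1)=-c$ and $K(e_1,e_i)=-c/4$ over the remaining basis vectors of a $J$-adapted orthonormal frame as in Section \ref{subsect3.2}---gives $\lambda=(m+1)c/2$. Substituting back yields
$$(Ah,h)\;\le\;\Bigl(-\tfrac{(m+1)c}{2}+c\Bigr)\Vert h\Vert^2\;=\;-\tfrac{m-1}{2}c\,\Vert h\Vert^2,$$
which is \eqref{eq:ineq}. For $m\ge 2$ the constant on the right is strictly negative, and since $A$ is a self-adjoint, strictly elliptic operator on $L^2(\mathcal S^2_c)$, this spectral gap delivers the strict linear stability of \eqref{eq:mRDF} at $g_B$. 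The argument is pointwise in the curvature, so it applies verbatim on any smooth closed quotient $(M^n,g_0)$ of $(\mathbb{CH}^m,g_B)$.

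The substantive work has already been absorbed into Lemmas \ref{R-gamma} and \ref{eigenvalue}; the remainder is bookkeeping. The main point requiring genuine care is normalization---the pointwise eigenvalue bound on $R_\gamma$ transfers to an $L^2$ estimate precisely because $\gamma$ is orthonormal in the convention fixed earlier, and the sign conventions in \eqref{eq:AR} and in the computation of $\lambda$ must be kept consistent. Once those are aligned the assembly is a one-line calculation.
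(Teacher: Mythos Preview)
Your proof is correct and follows essentially the same approach as the paper: drop the nonpositive terms $-\tfrac12\Vert T\Vert^2-\Vert\delta h\Vert^2$ from Koiso's identity \eqref{eq:AR}, bound the curvature term using the largest eigenvalue $c$ of $R_\gamma$ from Lemma \ref{eigenvalue}, and substitute $\lambda=(m+1)c/2$. The only cosmetic difference is that the paper computes $\lambda$ by reading off the scalar curvature $R=-m(m+1)c$ as the sum of the entries of $-\tfrac{c}{4}A_m$, whereas you sum sectional curvatures directly; the content is the same.
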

\begin{proof}
The sum of the entries of $-\frac{c}{4}A_m$ is the scalar curvature $R$ of $g_B$, so $R=-m(m+1)c$, and hence
\begin{align*}
 \lambda = -\frac{R}{2m} =\frac{m+1}{2}c.
\end{align*}

Let $\lambda_S$ denote the largest eigenvalue of $R_\gamma$, then $\lambda_S=c$ by Lemma \ref{eigenvalue}. Recall the variational characterization
\begin{align*}
 \lambda_S = \sup\limits_{h\in\mathcal S^2_c\setminus\{0\}} \frac{\langle R_S h,h\rangle}{\langle h,h\rangle}.
\end{align*}
Then equation \eqref{eq:AR} implies
\begin{align*}
(Ah,h) & \leq -\lambda\Vert h\Vert^2 + \int \langle R_S(h), h\rangle \\
& \leq -\frac{m+1}{2}c\Vert h\Vert^2+c\Vert h\Vert^2 \\
& \leq -\frac{m-1}{2}c\Vert h\Vert^2,\quad c>0.
\end{align*}
The asserted strict linear stability follows if $m\geq2$. \end{proof}

\begin{remark}
On a closed manifold $(M^n, g_0)$, we can assume $h\in\mathcal S^2$. On the complete noncompact $(\mathbb{CH}^m,g_B)$, we will relax the assumption $h\in\mathcal S^2_c$, cf. Corollary \ref{eqholds2}.
\end{remark}

Inequality \eqref{eq:ineq} is sharp since when $m=1$, the operator $A$ has a nontrivial nullspace, cf. Remark \ref{teich}. On the other hand, if on $M^n$ there is a smooth one-parameter family of complex hyperbolic metrics with some fixed complex structure, then $A$ would have a nontrivial null eigenspace. Proposition \ref{spec} shows that the operator $A$ has trivial null eigenspace when $m\geq 2$. Hence, we recover the following well-known result: the moduli space of complex hyperbolic metrics on a closed 4-manifold is locally rigid in the sense that if a complex hyperbolic metric $g$ is near $g_0$ in some norm, then $g$ differs from $g_0$ by a homothetic scaling and a diffeomorphism on $M^4$ \cite{IK05, Kim02, LeB95}. Moreover, we have extended this local rigidity to any closed $2m$-manifold when $m\geq 2$.

\section{(Weighted) little H\"{o}lder spaces}
On a closed Riemannian manifold $M^n$ admitting a complex hyperbolic metric, we fix a background metric and a finite atlas $\{ U_\upsilon \}_{1\leq\upsilon\leq\Upsilon}$ of coordinate charts covering $M^n$. Given a smooth $h\in\mathcal S^2$ over $M^n$, for each integer $k\geq 0$ and real number $\alpha\in(0,1)$, we denote 
\begin{align*}
 [h_{ij}]_{k+\alpha;U_\upsilon} := \sup\limits_{|\ell|=k} \sup\limits_{x\neq y \in U_\upsilon} \frac{\left|\nabla^\ell h_{ij}(x)- \nabla^\ell h_{ij}(y)\right|}{d(x,y)^\alpha},
\end{align*}
where $\ell$ is a multi-index and $\nabla^\ell h_{ij}= \nabla^{\ell_1}_1\nabla^{\ell_2}_2\cdots\nabla^{\ell_n}_n h_{ij}$.

We define the $(k+\alpha)$-H\"{o}lder norm of $h$ by
\begin{align*}
\Vert h \Vert_{k+\alpha} := 
\sup_{\substack{1\leq i,j\leq n,\\ 1\leq\upsilon\leq\Upsilon}}\left(\sum\limits_{m=0}^{k}\sup\limits_{|\ell|=m} \sup\limits_{x\in U_\upsilon}\left|\nabla^\ell h_{ij}(x)\right|+[h_{ij}]_{k+\alpha;U_\upsilon}\right).
\end{align*}
The components $\{h_{ij}\}$ are with respect to the fixed atlas $\{ U_\upsilon \}_{1\leq\upsilon\leq\Upsilon}$, whereas $\nabla$, $|\cdot |$, and the distance function $d$ are computed with respect to a fixed background metric. Different finite atlases or background metrics yield equivalent H\"{o}lder norms.

The little H\"{o}lder space $\fh^{k+\alpha}$ on the closed manifold $M^n$ is defined to be the completion of $C^\infty$ symmetric covariant two-tensor fields in $\Vert \cdot \Vert_{k+\alpha}$. The space $\fh^{k+\alpha}$ is separable \cite{GIK02}. In particular, $h\in \fh^{k+\alpha}$ has the following property \cite[Proposition 0.2.1]{Lun95}:
\begin{align*}
 \limt{t}{0+} \sup\limits_{|\ell|=k} \sup_{\substack{x\neq y\in U_\upsilon,\\ d(x,y)\leq t}}\frac{\left|\nabla^\ell h_{ij}(x) - \nabla^\ell h_{ij}(y)\right|}{d(x,y)^\alpha} = 0
\end{align*}
for all $1\leq \upsilon\leq \Upsilon$ and $1\leq i,j\leq n$. This property also defines $\fh^{k+\alpha}$ \cite{Lun95}.

On a complete noncompact manifold the definition of the H\"{o}lder norm depends on the choice of the atlas and the background metric. On $\mathbb{CH}^m$, $m\in\NN$, we will use the single geodesic normal coordinate chart given by the exponential map at the origin and choose the background metric to be $g_B$. In this case, $|\cdot|$, $\Vert\cdot\Vert$, and $d$ are computed with respect to $g_B$.

Let $h\in\mathcal S^2$, $h$ is not necessarily compactly supported over $\mathbb{CH}^m$. To apply equation \eqref{eq:AR} and Proposition \ref{spec}, $h$ should decay at spatial infinity with the decay rate dictated by the geometry under consideration. To apply Simonett's Stability Theorem to deduce dynamical stability from linear stability, $h$ should also belong to a Banach space that satisfies certain interpolation properties. These considerations lead us to define the following suitably weighted little-H\"{o}lder spaces.

We first set up some notations. Let $B_R$ be a geodesic ball of radius $R$ centered at the origin of our single chart. Fix $m\in\NN$, consider the open covering of $\mathbb{CH}^m$ by the family $\{A_N\}_{N\in\NN}$ of overlapping annuli and a disk defined by $A_1:=B_4$, and $A_N:=B_{N+3}\setminus B_{N-1}$ for $N\geq 2$. If $x,y\in A_N$, denote $d^N_x:=d(x,\p A_N)$, $d^N_{x,y}:=\min\{d_x,d_y\}$, and we write $d_x, d_{x,y}$ whenever there is no ambiguity. Given a smooth $h\in\mathcal S^2$ over $\mathbb{CH}^m$, for integers $k,q\geq 0$, multi-index $\ell$, and $\alpha\in(0,1)$, we let
\begin{align*}
 |h_{ij}|'_{q; A_N} & := \sup\limits_{|\ell|=q} \sup_{x\in A_N} d_x^q |\p^\ell h_{ij}(x)|,\\ 
 [h_{ij}]'_{k+\alpha; A_N} & := \sup\limits_{|\ell|=k} \sup_{x\neq y\in A_N} d_{x,y}^{k+\alpha} \frac{|\p^\ell h_{ij}(x)- \p^\ell h_{ij}(y)|}{d(x,y)^\alpha}.
\end{align*}

For each $m\in\NN$, we fix $\tau=(m+\xi)/2$ where $\xi>0$ and define the $\tau$-weighted $(k+\alpha)$-H\"{o}lder norm of $h$ by
\begin{align*}
\Vert h \Vert_{k+\alpha;\tau} := 
\sup_{1\leq i,j\leq n} \sup_{N\in\NN}e^{N\tau}\left(\sum\limits_{q=0}^{k}|h_{ij}|'_{q; A_N}
+[h_{ij}]'_{k+\alpha; A_N}\right).
\end{align*} 
The $\tau$-weighted little H\"{o}lder space $\mathfrak h^{k+\alpha}_\tau$ on the complete noncompact manifold $\mathbb{CH}^m$ is defined to be the closure of $C_c^\infty$ symmetric covariant two-tensor fields in $\Vert\cdot \Vert_{k+\alpha;\tau}$.

Similar to the little H\"{o}lder space $\mathfrak h^{k+\alpha}$, we have the following infinitesimal property for $h\in \fh^{k+\alpha}_\tau$.
\begin{lemma}
If $h\in\fh^{k+\alpha}_\tau$, define
\begin{align*}
 F_{h}(t) := \sup\limits_{1\leq i,j\leq n}\sup\limits_{N\in\NN}e^{N\tau}\left(\sup\limits_{|\ell|=k} \sup_{\substack{x\neq y\in A_N,\\ d(x,y)\leq t}} d_{x,y}^{k+\alpha}\frac{|\p^\ell h_{ij}(x)- \p^\ell h_{ij}(y)|}{d(x,y)^\alpha}\right),
\end{align*}
then
\begin{align}\label{eq:lholder}
 \limt{t}{0+} F_h(t) = 0.
\end{align}
\end{lemma}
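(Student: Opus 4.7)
The plan is to reduce the claim to the case of smooth, compactly supported tensors by the density built into the definition of $\mathfrak h^{k+\alpha}_\tau$, and then apply a standard $\epsilon/2$ argument. First I record three elementary properties of $F_h$: (i) it is subadditive and positively homogeneous in $h$, by linearity of $\partial^\ell$ and the triangle inequality; (ii) $F_h(t)\leq \Vert h\Vert_{k+\alpha;\tau}$ for every $t>0$, since $F_h(t)$ is exactly the H\"older-seminorm contribution to $\Vert h\Vert_{k+\alpha;\tau}$ with the inner supremum restricted to pairs at distance at most $t$; (iii) $F_h$ is monotone nondecreasing in $t$, so the limit as $t\to 0^+$ exists.

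Next I verify the conclusion for an arbitrary test tensor $\phi\in C_c^\infty$. Because $\mathrm{supp}\,\phi$ meets only finitely many annuli $A_N$, the outer supremum $\sup_{N\in\NN} e^{N\tau}$ in $F_\phi(t)$ reduces to a finite maximum, say over $N\leq N_0(\phi)$. On each such $A_N$, the component $\partial^\ell \phi_{ij}$ is smooth with bounded gradient on the (bounded) intersection of $\mathrm{supp}\,\phi$ with $A_N$, hence Lipschitz with some constant $L_\phi$, and $d_{x,y}$ is bounded by the width of $A_N$. For $x\neq y\in A_N$ with $d(x,y)\leq t$ this gives
$$d_{x,y}^{k+\alpha}\,\frac{|\partial^\ell\phi_{ij}(x)-\partial^\ell\phi_{ij}(y)|}{d(x,y)^\alpha}\leq C_\phi\,d(x,y)^{1-\alpha}\leq C_\phi\,t^{1-\alpha},$$
so $F_\phi(t)\to 0$ as $t\to 0^+$.

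For general $h\in\mathfrak h^{k+\alpha}_\tau$ there exists $\{\phi_n\}\subset C_c^\infty$ with $\Vert h-\phi_n\Vert_{k+\alpha;\tau}\to 0$. Combining (i) and (ii) yields
$$F_h(t)\leq F_{h-\phi_n}(t)+F_{\phi_n}(t)\leq \Vert h-\phi_n\Vert_{k+\alpha;\tau}+F_{\phi_n}(t).$$
Given $\epsilon>0$, I pick $n$ with $\Vert h-\phi_n\Vert_{k+\alpha;\tau}<\epsilon/2$ and then $\delta>0$ with $F_{\phi_n}(t)<\epsilon/2$ for $0<t<\delta$, concluding $F_h(t)<\epsilon$ on the same interval.

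There is no serious obstacle to this plan: the weighted structure is harmless because $F_h(t)$ only compares values of $h$ within a single annulus, and only finitely many annuli touch the support of a test tensor, so the factor $e^{N\tau}$ never blows up during the first step. The argument is a routine adaptation of the classical proof that little H\"older spaces on compact manifolds admit the analogous infinitesimal characterization; the only point requiring care is the explicit verification that the seminorm bound $F_g(t)\leq \Vert g\Vert_{k+\alpha;\tau}$ survives the weighted definition, which it does by inspection.
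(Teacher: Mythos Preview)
Your proof is correct and follows essentially the same approach as the paper: density of $C_c^\infty$ tensors in $\mathfrak h^{k+\alpha}_\tau$, subadditivity of $F_h$, the bound $F_{h-\phi_n}(t)\leq\Vert h-\phi_n\Vert_{k+\alpha;\tau}$, and an $\epsilon/2$ argument. The paper's version is terser and simply asserts that each $C_c^\infty$ approximant satisfies \eqref{eq:lholder}; you supply the missing detail by noting that only finitely many annuli meet a compact support and that the Lipschitz bound yields the $t^{1-\alpha}$ decay, which is a welcome clarification but not a different route.
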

\begin{proof}
Given $h\in\fh^{k+\alpha}_\tau$, there exists a sequence of $C_c^\infty$ symmetric two-tensor fields $\{h_n\}_{n\in\NN}$ that converge to $h$ in $\fh^{k+\alpha}_\tau$. Note that each $h_n$ satisfies property \ref{eq:lholder}. Then we estimate
\begin{align*}
 F_h(t)\leq F_{(h-h_n)}(t) + F_{h_n}(t).
\end{align*}
In particular, 
\begin{align*}
 F_{(h-h_n)}(t) \leq \Vert h-h_n \Vert_{k+\alpha;\tau}.
\end{align*}
For $\varepsilon>0$, choose $n$ large such that $\Vert h-h_n \Vert_{k+\alpha;\tau}<\varepsilon/2$, and choose $\delta >0$ such that $F_{h_n}(t)<\varepsilon/2$ if $t<\delta$, then $F_h(t)<\varepsilon$ if $t<\delta$.
\end{proof}
\begin{remark}
From now on, we shorten some expressions. For integer $k\geq0$, $\nabla^k$ (or $\p^k$) means first applying $\nabla^\ell$ (or $\p^\ell$) and then taking the supremum over $|\ell|=k$. We omit the indices of $h$, and the result will follow after either summing or taking the supremum over $1\leq i,j\leq n=2m$.
\end{remark}

We denote by $W_1^2(\mathbb{CH}^m)$ the Sobolev space of symmetric two-tensor fields $h$ over $\mathbb{CH}^m$ such that
\begin{align*}
 \sums{i,j=1}{2m} \int_{\mathbb{CH}^m} \left( |h_{ij}|^2+|\nabla h_{ij}|^2 \right) d\mu_{g_B} < \infty,
\end{align*}
where $\nabla$ is computed with respect to the background metric $g_B$. We denote by $\hookrightarrow$ a continuous embedding.

\begin{lemma}\label{dom}
Let $k,m\in\NN$, $\alpha\in(0,1)$, and fix $\tau>m/2$. Then $\mathfrak h^{k+\alpha}_\tau \hookrightarrow W_1^2(\mathbb{CH}^m)$.
\end{lemma}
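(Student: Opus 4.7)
The plan is to cover $\mathbb{CH}^m$ by the overlapping annuli $\{A_N\}_{N\in\NN}$, extract pointwise exponential decay for $h_{ij}$ and its first partial derivatives from the $\fh^{k+\alpha}_\tau$-norm, and then couple those bounds to the exponential volume growth of the Bergman metric so that the $L^2$-integrals converge exactly when $2\tau>m$.

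First, I would observe that because each annulus $A_N$ has width $4$ and each $x\in\mathbb{CH}^m$ lies in at most four of them, one can always select $N=N(x)$ with $x\in A_{N}$ and $d_x^{N}\geq c_0>0$, the bound $c_0$ being uniform in $x$. The definition of $\Vert\cdot\Vert_{k+\alpha;\tau}$ then produces, for each chart index pair $i,j$,
\[
|h_{ij}(x)|\leq e^{-N\tau}\Vert h\Vert_{k+\alpha;\tau},\qquad |\p_k h_{ij}(x)|\leq c_0^{-1}e^{-N\tau}\Vert h\Vert_{k+\alpha;\tau}.
\]
Since $x\in A_{N}$ forces $N\geq r(x)-3$, where $r(x)$ is the $g_B$-distance to the origin, both quantities are dominated by $C e^{-\tau r(x)}\Vert h\Vert_{k+\alpha;\tau}$.

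Next, I would compare coordinate partials with the $g_B$-gradient appearing in $W_1^2$. Each $h_{ij}$ is a scalar on the chart, so $|\nabla h_{ij}|_{g_B}^{2}=g_B^{k\ell}\p_k h_{ij}\,\p_\ell h_{ij}$. Because $\mathbb{CH}^m$ is Cartan--Hadamard, Rauch comparison applied to the exponential chart gives $g_B\succeq g_{\mathrm{Eucl}}$ as matrices, whence $\|g_B^{-1}\|_{\mathrm{op}}\leq 1$ at every point. Therefore $|\nabla h_{ij}|_{g_B}\leq |\p h_{ij}|$ pointwise, and combining with Step 1 yields $|\nabla h_{ij}|_{g_B}\leq C e^{-\tau r(x)}\Vert h\Vert_{k+\alpha;\tau}$.

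Finally, I would invoke the volume growth of $g_B$: in geodesic polar coordinates the density is a constant multiple of $\sinh^{2m-1}(\sqrt{c}\, r/2)\cosh(\sqrt{c}\, r/2)$, which after normalizing the curvature constant is bounded by $Ce^{mr}$ at infinity. Putting the three inputs together,
\[
\int_{\mathbb{CH}^m}\bigl(|h|^2+|\nabla h|^2\bigr)\,d\mu_{g_B}\leq C\,\Vert h\Vert_{k+\alpha;\tau}^{2}\int_0^\infty e^{-2\tau r}e^{mr}\,dr,
\]
which is finite precisely when $2\tau>m$. This gives the continuous embedding $\fh^{k+\alpha}_\tau\hookrightarrow W_1^2(\mathbb{CH}^m)$ and explains the threshold $\tau>m/2$.

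The main obstacle is the balancing of the two exponential rates (decay rate $\tau$ from the norm against the volume entropy $m$ of $\mathbb{CH}^m$); the rest is bookkeeping on the overlapping cover. A secondary minor point is passing from coordinate partials to covariant derivatives, which is handled cleanly for scalar component functions $h_{ij}$ via the Hadamard bound on $g_B^{-1}$.
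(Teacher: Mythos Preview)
Your proposal is correct and follows essentially the same strategy as the paper: extract pointwise exponential decay $\sim e^{-\tau r(x)}$ for $h$ and $\partial h$ from the annular definition of the weighted norm, then balance this against the exponential volume growth $\sim e^{mr}$ of $g_B$ so that the $L^2$-integral converges precisely when $2\tau>m$. The only cosmetic differences are that the paper sums over the dyadic shells $B_{2N}\setminus B_{2N-2}$ rather than integrating directly in polar coordinates, and handles the passage from $\partial h$ to $\nabla h$ by invoking bounded Christoffel symbols instead of your Rauch-comparison bound $g_B^{-1}\preceq I$.
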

\begin{proof}
Recall that $B_R$ denotes a geodesic ball of radius $R$ centered at the origin in $\mathbb{CH}^m$. The volume of $B_R$, denoted by $V(B_R)$, has exponential growth rate $m$, i.e.,  there is a positive constant $\tilde C=\tilde C(m)$ such that $V(B_R)\leq \tilde C e^{mR}$, see for example \cite{Gold99}. We know $\nabla h = \p h + \Gamma\ast h$, where $\ast$ denotes contraction using $g_B$, and $\Gamma$ is uniformly bounded by some constant $C$ in geodesic normal coordinates on the homogeneous space $(\mathbb{CH}^m, g_B)$. In what follows and the rest of the paper, ``$A\lesssim B$'' means $A\leq C B$, where $C>0$ is a constant that may change from line to line.

Let $h\in \mathfrak h^{k+\alpha}_\tau$. Recall that $\tau=(m+\xi)/2$ where $\xi>0$. If $x\in B_2$, then $x\in A_1$ with $d_x\geq 1$, so we have
\begin{align*}
\int_{B_2} \left(|h(x)|^2 + |\nabla h(x)|^2 \right)d\mu_{g_B}(x) & \lesssim \int_{B_2} \left( |h(x)|^2+|\p h(x)|^2 \right) d\mu_{g_B}(x)\\
& \leq \int_{B_2} \left[ (|h(x)|'_{0;A_1})^2+d_x^{-2}(|h(x)|'_{1;A_1})^2 \right] d\mu_{g_B}(x)\\
& \leq \int_{B_2} \left[ (|h(x)|'_{0;A_1})^2+(|h(x)|'_{1;A_1})^2 \right] d\mu_{g_B}(x)\\
& \leq \Vert h\Vert^2_{k+\alpha;\tau}e^{-2\tau}\int_{B_2} d\mu_{g_B}\\
& \lesssim \Vert h\Vert^2_{k+\alpha;\tau}e^{-2\tau} e^{2m}\\
& = \Vert h\Vert^2_{k+\alpha;\tau}e^{-\xi}e^{-m}e^{2m}\\
& = e^{m-\xi} \Vert h\Vert^2_{k+\alpha;\tau}.
\end{align*}

Similarly, if $x\in B_{2N}\setminus B_{2N-2}$, $N\geq 2$, then $x\in A_{2N-2}=B_{2N+1}\setminus B_{2N-3}$ with $d_x\geq 1$. So $|h(x)|\leq |h|'_{0;A_{2N-2}}$, $|\p h(x)|\leq d_x^{-1}|h|'_{1;A_{2N-2}}\leq |h|'_{1;A_{2N-2}}$, and we estimate
\begin{align*}
\int_{B_{2N}\setminus B_{2N-2}} \left(|h(x)|^2 + |\nabla h(x)|^2 \right) d\mu_{g_B}(x) & \lesssim e^{-2(2N-2)\tau} \Vert h\Vert^2_{k+\alpha;\tau}\int_{B_{2N}\setminus B_{2N-2}}d\mu_{g_B}(x)\\
& \lesssim e^{-(2N-2)\xi}\Vert h\Vert^2_{k+\alpha;\tau} e^{-(2N-2)m}e^{2Nm}\\
& = e^{2m-(2N-2)\xi}\Vert h\Vert^2_{k+\alpha;\tau}.
\end{align*} 

Then, with $B_0=\emptyset$, we have
\begin{align*}
\int_{B_{2N}}\left( |h(x)|^2+|\nabla h(x)|^2 \right) d\mu_{g_B}(x) &= \sums{K=1}{N}\int_{B_{2K}\setminus B_{2K-2}} \left( |h(x)|^2+|\nabla h(x)|^2 \right) d\mu_{g_B}(x)\\
& \lesssim \Vert h\Vert^2_{k+\alpha;\tau} \left( e^{m-\xi} + \sums{K=2}{N} e^{2m-(2K-2)\xi}\right)\\
& \leq \Vert h\Vert^2_{k+\alpha;\tau} \left(e^{m-\xi} + e^{2m} \frac{e^{-2\xi}}{1-e^{-2\xi}}\right).
\end{align*}
Let $N\to\infty$, we see $h\in W_1^2(\mathbb{CH}^m)$.
\end{proof}

\begin{lemma}\label{eqholds}
Let $k, m\in\NN$, $\alpha\in(0,1)$, and fix $\tau>m/2$. Then for all $h\in \mathfrak h^{k+\alpha}_\tau$, equation \eqref{eq:AR} holds.
\end{lemma}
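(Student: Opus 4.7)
The plan is to establish \eqref{eq:AR} for $h\in\mathfrak h^{k+\alpha}_\tau$ via a density argument: approximate $h$ by compactly supported smooth tensors in a topology strong enough that every term of \eqref{eq:AR} passes to the limit. By the very definition of $\mathfrak h^{k+\alpha}_\tau$ as the closure of $C_c^\infty$ symmetric two-tensor fields in $\Vert\cdot\Vert_{k+\alpha;\tau}$, there exists a sequence $\{h_n\}\subset C_c^\infty$ with $h_n\to h$ in $\Vert\cdot\Vert_{k+\alpha;\tau}$. Lemma \ref{dom} then yields $h_n\to h$ in $W_1^2(\mathbb{CH}^m)$, and this $W_1^2$-convergence is precisely the regularity I will exploit.

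Next I would check that every term appearing in \eqref{eq:AR} is a continuous quadratic form on $W_1^2$. The norms $\Vert h\Vert^2$, $\Vert\nabla h\Vert^2$, $\Vert\delta h\Vert^2$, and $\Vert T\Vert^2$ are manifestly continuous in $W_1^2$, since $\delta h$ and $T$ are pointwise linear expressions in $\nabla h$ with coefficients depending only on $g_B$. The curvature integral $\int\langle R_S(h),h\rangle\,d\mu$ is continuous already in $L^2$: on the homogeneous space $(\mathbb{CH}^m,g_B)$ the Riemann tensor has uniformly bounded components in the orthonormal frame, so $|R_S(h)|\lesssim |h|$ pointwise, giving $\bigl|\int\langle R_S(h),h\rangle\,d\mu\bigr|\lesssim\Vert h\Vert^2$.

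For each $h_n\in C_c^\infty$, equation \eqref{eq:AR} holds by the derivation already carried out in the excerpt, namely by combining \eqref{eq:AR0} with Koiso's Bochner identity $\Vert\nabla h\Vert^2=\tfrac12\Vert T\Vert^2+\Vert\delta h\Vert^2+\lambda\Vert h\Vert^2+\int\langle R_S(h),h\rangle\,d\mu$, both of which were proved by integration by parts under compact support so that no boundary terms arise. Letting $n\to\infty$ and invoking $W_1^2$-continuity of each of the five quadratic forms above then extends \eqref{eq:AR} to any $h\in W_1^2$, and in particular to every $h\in\mathfrak h^{k+\alpha}_\tau$ by Lemma \ref{dom}.

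The one conceptual subtlety (which I expect to be the main obstacle, in that it forces a choice rather than a calculation) is how to read $(Ah,h)$ when $h$ has only $W_1^2$-regularity, since then $Ah$ lives only in a distributional sense. The natural resolution is to adopt \eqref{eq:AR0}, i.e., the closed quadratic form $-\Vert\nabla h\Vert^2+2\int\langle R_S(h),h\rangle\,d\mu$, as the definition of $(Ah,h)$ on $W_1^2$; this agrees with the pointwise integral for $h\in C_c^\infty$ by integration by parts and is the natural form extension of the symmetric operator $A$. With this reading fixed, the density argument goes through and the lemma reduces to the $W_1^2$-continuity verified above.
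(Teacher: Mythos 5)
Your proposal is correct and takes essentially the same route as the paper: reduce to $C_c^\infty$ by density, then pass to the limit using the continuous embedding $\mathfrak h^{k+\alpha}_\tau\hookrightarrow W_1^2(\mathbb{CH}^m)$ from Lemma \ref{dom} and the $W_1^2$-continuity of the quadratic forms in \eqref{eq:AR}. The one small variation is that you approximate directly in the $\Vert\cdot\Vert_{k+\alpha;\tau}$-norm (using the definitional density of $C_c^\infty$ in $\mathfrak h^{k+\alpha}_\tau$) and then push forward by the embedding, whereas the paper instead invokes the density of $C_c^\infty$ in $W_1^2(\mathbb{CH}^m)$ via the Aubin--Hebey theory for complete manifolds with bounded geometry; both yield the needed approximating sequence, and your explicit verification of $W_1^2$-continuity of each term, together with the remark on reading $(Ah,h)$ as a closed form, makes precise steps the paper leaves implicit.
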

\begin{proof} $(\mathbb{CH}^m, g_B)$ is a smooth complete Riemannian manifold with infinite injectivity radius. The Ricci curvature of $g_B$ and all its covariant derivatives are bounded. So $C_c^\infty$ tensor fields are dense in $W_1^2(\mathbb{CH}^m)$ \cite{Aub82, Heb99}. By Lemma \ref{dom}, $\mathfrak h^{k+\alpha}_\tau\hookrightarrow W_1^2(\mathbb{CH}^m)$ for any $k, m\in\NN$ and $\alpha\in(0,1)$. So equation \eqref{eq:AR} holds for all $h\in \mathfrak h^{k+\alpha}_\tau$ by strong convergence in $W^2_1$-norm. \end{proof}

Thus, we extend Proposition \ref{spec} to the following.
\begin{cor}\label{eqholds2}
Let $k,m\in\NN$, $m\geq 2$, $\alpha\in(0,1)$, and fix $\tau>m/2$. For all $h\in \mathfrak h^{k+\alpha}_\tau\setminus\{0\}$, the curvature-normalized Ricci-DeTurck flow \eqref{eq:mRDF} is strictly linearly stable at $(\mathbb{CH}^m,g_B)$ in the sense of Proposition \ref{spec}.
\end{cor}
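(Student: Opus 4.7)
The plan is to combine Lemma \ref{eqholds}, which lifts the Bochner-type identity \eqref{eq:AR} from compactly supported symmetric two-tensors to the weighted little Hölder space, with the pointwise spectral information on $R_S$ already extracted in Lemma \ref{eigenvalue}. Once \eqref{eq:AR} is available on $\mathfrak{h}^{k+\alpha}_\tau$, the proof of Proposition \ref{spec} goes through essentially verbatim, so the argument is short.

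First, I would invoke Lemma \ref{eqholds} to conclude that for every $h\in\mathfrak{h}^{k+\alpha}_\tau$,
\[
(Ah,h) \;=\; -\tfrac{1}{2}\Vert T\Vert^2-\Vert\delta h\Vert^2-\lambda\Vert h\Vert^2+\int_{\mathbb{CH}^m}\langle R_S(h),h\rangle\,d\mu_{g_B},
\]
with every term on the right finite, since Lemma \ref{dom} gives $\mathfrak{h}^{k+\alpha}_\tau\hookrightarrow W^2_1(\mathbb{CH}^m)$ under the hypothesis $\tau>m/2$. Next, because $(\mathbb{CH}^m,g_B)$ is homogeneous, $R_S$ is a symmetric endomorphism of $\mathcal S_2$ whose fiberwise spectrum is independent of the base point; Lemma \ref{eigenvalue} identifies its largest eigenvalue as $c$. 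Consequently $\langle R_S(h)(x),h(x)\rangle\leq c\,|h(x)|^2$ pointwise on $\mathbb{CH}^m$, and integration gives $\int\langle R_S(h),h\rangle\,d\mu_{g_B}\leq c\Vert h\Vert^2$.

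Dropping the nonpositive terms $-\tfrac{1}{2}\Vert T\Vert^2-\Vert\delta h\Vert^2$ and using $\lambda=\tfrac{m+1}{2}c$ (which follows from $R=-m(m+1)c$, as computed in the proof of Proposition \ref{spec}), I conclude
\[
(Ah,h) \;\leq\; -\tfrac{m+1}{2}c\Vert h\Vert^2 + c\Vert h\Vert^2 \;=\; -\tfrac{m-1}{2}c\Vert h\Vert^2,
\]
which for $m\geq 2$ and $h\not\equiv 0$ is the asserted strict linear stability.

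There is no genuinely new obstacle: the corollary is a density/limiting statement, and the only analytically delicate issue --- justifying the integration by parts in deriving \eqref{eq:AR} for tensors that are merely in $\mathfrak{h}^{k+\alpha}_\tau$ rather than compactly supported --- has already been handled in Lemma \ref{eqholds} by approximating in $W^2_1$-norm using density of $C_c^\infty$. The remaining steps are purely algebraic and reuse the eigenvalue bound and the expression for $\lambda$ from Section \ref{subsect3.2}.
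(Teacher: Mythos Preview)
Your proposal is correct and follows exactly the route the paper intends: the paper states the corollary immediately after Lemma \ref{eqholds} with the phrase ``Thus, we extend Proposition \ref{spec} to the following,'' giving no separate proof because the corollary is simply the combination of Lemma \ref{eqholds} (which makes \eqref{eq:AR} available on $\mathfrak{h}^{k+\alpha}_\tau$) with the algebraic estimate from Proposition \ref{spec}. Your write-up just makes this implicit step explicit.
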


Consider two Banach spaces $X, Y$ with $Y\overset{d}\hookrightarrow X$, where $\overset{d}\hookrightarrow$ denotes a continuous and dense embedding. Given a real number $\theta\in(0,1)$, one can define the \emph{continuous} interpolation space $(X, Y)_\theta$. The interpolation methods are well known in the literature, see for example \cite{Am95, Lun09, Sim95}, and we will recall their precise definitions in Section \ref{sect7}. We now state two interpolation results for the weighted little H\"{o}lder spaces $\mathfrak h^{k+\alpha}_\tau$. We postpone their proofs to Section \ref{sect7}. Analogous results hold for the little H\"{o}lder spaces $\mathfrak h^{k+\alpha}$ \cite{Trie78} and have been used in \cite{GIK02, KY09, Kno09}.
\begin{lemma}\label{a7}
Let $m\in\NN$ and fix $\tau>m/2$. Let $0\leq k \leq \ell$ be integers, $0<\alpha\leq\beta<1$, and $0<\theta<1$, then there exists a constant $C(\theta)>0$ depending on $\theta$ such that for all $h\in \mathfrak h^{\ell+\beta}_\tau$,
\begin{align}\label{eq:a7-1}
 \Vert h \Vert_{(\mathfrak h^{k+\alpha}_\tau,\; \mathfrak h^{\ell+\beta}_\tau)_\theta} \leq C(\theta) \Vert h \Vert^{1-\theta}_{\mathfrak h^{k+\alpha}_\tau}\Vert h \Vert^{\theta}_{\mathfrak h^{\ell+\beta}_\tau}.
\end{align}
\end{lemma}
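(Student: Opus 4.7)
The plan is to treat this as an abstract Banach-couple interpolation statement for $(\mathfrak h^{k+\alpha}_\tau, \mathfrak h^{\ell+\beta}_\tau)$, and deduce the convexity estimate directly from the K-functional characterization of the continuous interpolation space, once the embedding between the two spaces is verified.

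\textbf{Step 1: Verify the Banach pair.} First I would check that $\mathfrak h^{\ell+\beta}_\tau \hookrightarrow \mathfrak h^{k+\alpha}_\tau$ is a continuous, dense embedding. Continuity follows term-by-term from the definition of $\Vert\cdot\Vert_{k+\alpha;\tau}$: for $0 \leq q \leq k \leq \ell$ the weighted seminorms $|h|'_{q;A_N}$ are dominated by $|h|'_{q;A_N}$ appearing already in the larger norm, and the weighted H\"older seminorm $[h]'_{k+\alpha;A_N}$ can be bounded by a combination of $|h|'_{k;A_N}$, $|h|'_{k+1;A_N}$ (if $k < \ell$), or $[h]'_{\ell+\beta;A_N}$ (if $k = \ell$) using the elementary inequality $|f(x)-f(y)|/d(x,y)^\alpha \leq 2 \Vert f\Vert_{L^\infty}^{1-\alpha/\beta}[f]_\beta^{\alpha/\beta}$ on each annulus and the fact that $d_{x,y} \leq 4$ on $A_N$. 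The uniform-in-$N$ weight $e^{N\tau}$ is the same for both spaces, so it passes through. Density is automatic, since each space is the closure of $C_c^\infty$ tensors in its own norm.

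\textbf{Step 2: Abstract K-functional estimate.} Writing $X = \mathfrak h^{k+\alpha}_\tau$ and $Y = \mathfrak h^{\ell+\beta}_\tau$, the continuous interpolation space $(X,Y)_\theta$ in the sense of Da Prato--Grisvard / Lunardi is defined via the K-functional
\[
K(t,h) = \inf\bigl\{\Vert a\Vert_X + t\Vert b\Vert_Y : h = a+b,\ a \in X,\ b \in Y\bigr\},\qquad t>0,
\]
with a norm equivalent to $\sup_{t>0} t^{-\theta}K(t,h)$ (with a suitable ``little-$o$ at $t=0$'' side condition). The two trivial decompositions $h = h+0$ and $h = 0 + h$ yield $K(t,h) \leq \min(\Vert h\Vert_X,\ t\Vert h\Vert_Y)$. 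Splitting $t$ at $t^\ast := \Vert h\Vert_X/\Vert h\Vert_Y$ gives $t^{-\theta}K(t,h) \leq \Vert h\Vert_X^{1-\theta}\Vert h\Vert_Y^\theta$ for every $t>0$, and taking the supremum produces \eqref{eq:a7-1}. The constant $C(\theta)$ absorbs the equivalence between the K-functional norm and whatever concrete norm is placed on $(X,Y)_\theta$ in Section \ref{sect7}.

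\textbf{Step 3: Verify that $h$ genuinely lies in the continuous interpolation space.} For any $h \in Y = \mathfrak h^{\ell+\beta}_\tau$, the decomposition $h = h + 0$ gives $K(t,h) \leq t\Vert h\Vert_Y$, hence $t^{-\theta}K(t,h) \leq t^{1-\theta}\Vert h\Vert_Y \to 0$ as $t\to 0^+$, so $h \in (X,Y)_\theta$ with the continuous (rather than merely real) interpolation condition satisfied. Density of $Y$ in $(X,Y)_\theta$ is not needed for \eqref{eq:a7-1} itself, only the pointwise estimate.

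\textbf{Main obstacle.} The abstract K-functional argument is clean, and the real work is in Step 1: verifying that the Banach-couple structure is the ``right'' one, i.e., that the weighted little-H\"older spaces defined by closure of $C_c^\infty$ in $\Vert\cdot\Vert_{k+\alpha;\tau}$ genuinely embed as needed, and that the annular-weight structure is compatible with the interpolation functor. In particular, showing the interior estimate dominating $[h]'_{k+\alpha;A_N}$ by the $\ell+\beta$ norm \emph{uniformly} in $N$ with the same exponential weight $e^{N\tau}$, without losing a factor that depends on $N$, requires that the geometry on each $A_N$ is quantitatively the same (which it is, since $(\mathbb{CH}^m, g_B)$ is homogeneous and the annuli $A_N$ have uniformly bounded diameter in the background metric). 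This uniformity, rather than the abstract interpolation inequality, is the technical heart of the lemma.
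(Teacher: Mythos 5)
Your argument is correct and follows essentially the same route as the paper's: the paper simply cites Lunardi's abstract interpolation inequality (\cite[Corollary 1.7]{Lun09}) for the dense embedding $\mathfrak h^{\ell+\beta}_\tau\overset{d}\hookrightarrow\mathfrak h^{k+\alpha}_\tau$, while your Steps 2--3 reproduce the short proof of that abstract fact via the trivial $K$-functional decompositions $h=h+0$, $h=0+h$ and the split at $t^\ast=\Vert h\Vert_X/\Vert h\Vert_Y$. One small correction of emphasis: the inequality \eqref{eq:a7-1} is a purely abstract consequence of the Banach-couple structure once the dense embedding is granted, so the annular-weight compatibility you single out in Step 1 as the ``technical heart'' is not actually where the difficulty of this lemma lies---the paper treats that embedding as immediate, and the genuinely delicate interpolation work on the weighted spaces is deferred to Lemmas \ref{interpo1}--\ref{interpo2} and Theorem \ref{interp}.
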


\begin{thm}\label{interp}
Under the assumptions of Lemma \ref{a7}, if $(1-\theta)(k+\alpha)+\theta(\ell+\beta)\notin\NN$, then there is a Banach space isomorphism
\begin{align*}
(\mathfrak h^{k+\alpha}_{\tau},\mathfrak h^{\ell+\beta}_{\tau})_{\theta}\cong \mathfrak h^{(1-\theta)(k+\alpha)+\theta(\ell+\beta)}_{\tau},
\end{align*}
with equivalence of the respective norms.
\end{thm}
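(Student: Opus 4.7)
The plan is to prove the theorem by reducing the interpolation claim to local statements on the overlapping annuli $\{A_N\}_{N\in\NN}$, where the exponential weight $e^{N\tau}$ factors out uniformly and the interior weights $d_x, d_{x,y}$ are bounded (since each $A_N$ has radial width $4$). Throughout I write $\gamma:=(1-\theta)(k+\alpha)+\theta(\ell+\beta)$ and use the K-functional characterization of the continuous interpolation space,
\begin{equation*}
\|u\|_{(X,Y)_\theta}\sim\sup_{t>0}t^{-\theta}K(t,u;X,Y),\qquad K(t,u):=\inf_{u=u_0+u_1}\bigl(\|u_0\|_X+t\|u_1\|_Y\bigr),
\end{equation*}
restricted to those $u$ for which $t^{-\theta}K(t,u)\to 0$ as $t\to 0^+$, which is the Da Prato--Grisvard continuous interpolation used throughout this paper.

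For the embedding $(\mathfrak h^{k+\alpha}_\tau,\mathfrak h^{\ell+\beta}_\tau)_\theta\hookrightarrow\mathfrak h^{\gamma}_\tau$, I would apply Lemma \ref{a7} to an arbitrary decomposition $u=u_0+u_1$ to obtain
\begin{equation*}
\|u\|_{\mathfrak h^\gamma_\tau}\lesssim\|u_0\|_{\mathfrak h^{k+\alpha}_\tau}^{1-\theta}\|u_1\|_{\mathfrak h^{\ell+\beta}_\tau}^{\theta}\lesssim t^{-\theta}\bigl(\|u_0\|_{\mathfrak h^{k+\alpha}_\tau}+t\|u_1\|_{\mathfrak h^{\ell+\beta}_\tau}\bigr),
\end{equation*}
where the second inequality uses the elementary bound $a^{1-\theta}b^\theta\lesssim a+b$ after rescaling. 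Taking the infimum over decompositions and the supremum over $t>0$ yields the continuous embedding. The little Hölder condition \eqref{eq:lholder} for $u$ passes to the limit from the analogous property for the $C_c^\infty$ approximants that define the continuous interpolation space.

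For the reverse embedding $\mathfrak h^{\gamma}_\tau\hookrightarrow(\mathfrak h^{k+\alpha}_\tau,\mathfrak h^{\ell+\beta}_\tau)_\theta$, I would construct an explicit mollifier $J_t$ by assembling Euclidean convolutions in geodesic normal coordinates on each annulus, summed via a partition of unity subordinate to a slight thickening of $\{A_N\}$. Because $(\mathbb{CH}^m,g_B)$ is homogeneous and the annuli have bounded geometry uniformly in $N$, the mollifier estimates are uniform in $N$:
\begin{equation*}
\|u-J_tu\|_{\mathfrak h^{k+\alpha}_\tau}\lesssim t^{\gamma-(k+\alpha)}\|u\|_{\mathfrak h^\gamma_\tau},\qquad\|J_tu\|_{\mathfrak h^{\ell+\beta}_\tau}\lesssim t^{-((\ell+\beta)-\gamma)}\|u\|_{\mathfrak h^\gamma_\tau}.
\end{equation*}
Setting $s=t^{(\ell+\beta)-(k+\alpha)}$ gives $K(s,u)\lesssim s^\theta\|u\|_{\mathfrak h^\gamma_\tau}$, and the defining density of $C_c^\infty$ in $\mathfrak h^\gamma_\tau$ produces the vanishing $s^{-\theta}K(s,u)\to 0$ as $s\to 0^+$ required for membership in the continuous interpolation space.

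The hard part is engineering $J_t$ so that it respects both the exponential factor $e^{N\tau}$ and the interior distance weights $d_x,d_{x,y}$ on each $A_N$. A naive convolution at fixed scale $t$ would violate the interior-weight bookkeeping near $\partial A_N$, so I would either use a Stein-type extension from $A_N$ to a slightly larger annulus before convolving and restricting back, or use a variable-scale mollifier of width comparable to $d_x$ at the point $x$. The overlap structure of the annuli is crucial: each point lies in several $A_N$'s, so contributions from neighbouring annuli are controlled by the fact that $e^{(N\pm j)\tau}/e^{N\tau}=e^{\pm j\tau}$ is bounded for bounded $j$. The non-integrality hypothesis $\gamma\notin\NN$ enters exactly as in the classical Triebel--Lunardi interpolation for unweighted Hölder spaces: at integer smoothness the real interpolation method produces Zygmund spaces distinct from $C^k$, and the identification with $\mathfrak h^\gamma_\tau$ would fail.
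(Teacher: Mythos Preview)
Your forward embedding argument contains a genuine error. You write that Lemma \ref{a7} applied to a decomposition $u=u_0+u_1$ gives
\[
\|u\|_{\mathfrak h^\gamma_\tau}\lesssim\|u_0\|_{\mathfrak h^{k+\alpha}_\tau}^{1-\theta}\|u_1\|_{\mathfrak h^{\ell+\beta}_\tau}^{\theta},
\]
but Lemma \ref{a7} says nothing of the kind: it bounds the \emph{interpolation} norm of a single element $h$ by $\|h\|_{k+\alpha}^{1-\theta}\|h\|_{\ell+\beta}^{\theta}$, which is the inequality in the opposite direction. Worse, the displayed inequality is simply false for arbitrary decompositions: take $u_0=0$, $u_1=u$, and the right side vanishes. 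Even the single-element Gagliardo--Nirenberg-type estimate $\|h\|_{\gamma}\lesssim\|h\|_{k+\alpha}^{1-\theta}\|h\|_{\ell+\beta}^{\theta}$ (which is not Lemma \ref{a7}) would only hold for $h\in\mathfrak h^{\ell+\beta}_\tau$ and does not extend to a bound on $(\mathfrak h^{k+\alpha}_\tau,\mathfrak h^{\ell+\beta}_\tau)_\theta$ by density, since $\|\cdot\|_{\ell+\beta}$ is not continuous in the interpolation norm. The forward embedding genuinely requires estimating the $\mathfrak h^\gamma_\tau$-seminorms pointwise in terms of $K(d(x,y),h)$, as the paper does in Lemma \ref{interpo1}.

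Your reverse embedding via a mollifier $J_t$ is workable in spirit, and is essentially what the paper does---but only in the base case $(\mathfrak h^0_\tau,\mathfrak h^1_\tau)_\theta\cong\mathfrak h^\theta_\tau$. The paper avoids carrying out your general construction (variable-scale mollifiers, Stein extensions across $\partial A_N$, partition-of-unity patching) by a much shorter route: after the base case, it checks a resolvent bound for the coordinate derivations $\partial_i$ on $\mathfrak h^0_\tau$ (Lemma \ref{C2}), invokes the abstract theory in \cite{Lun09} to get $(\mathfrak h^0_\tau,\mathfrak h^p_\tau)_\theta\cong\mathfrak h^{p\theta}_\tau$ for all $p\in\NN$ (Lemma \ref{interpo2}), and then the Reiteration Theorem immediately yields the full statement. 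Your direct approach, if completed, would work but is substantially more laborious; the paper's reiteration strategy is the standard shortcut and is what you are missing.
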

Consequently, $\mathfrak h^{\ell+\beta}_\tau\overset{d}\hookrightarrow (\mathfrak h^{k+\alpha}_\tau,\mathfrak h^{\ell+\beta}_\tau)_\theta \overset{d}\hookrightarrow \mathfrak h^{k+\alpha}_\tau$ for $\ell\geq k$, $\beta\geq\alpha$.

\section{Dynamical stability of $(M^n, g_0)$}
For fixed $0<\sigma<\rho<1$, consider the following densely and continuously embedded spaces:
\begin{align*}\begin{array}{ccccccc}
\mathbb X_1 & \overset{d}\hookrightarrow & \mathbb E_1 & \overset{d}\hookrightarrow &\mathbb X_0 & \overset{d}\hookrightarrow & \mathbb 
E_0\\
\verteq	& & \verteq & & \verteq & & \verteq\\
\fh^{2+\rho} & & \fh^{2+\sigma} & &\fh^{0+\rho}& & \fh^{0+\sigma}\\
\end{array}.\end{align*}
For fixed $1/2\leq\beta<\alpha<1$, define
\begin{align*}\mathbb X_\beta:=(\mathbb X_0,\mathbb X_1)_\beta,\quad\mathbb X_\alpha:=(\mathbb X_0,\mathbb X_1)_\alpha.\end{align*}
For fixed $0<\epsilon\ll 1$, define
\begin{align*}\mathbb \GG_\beta:=\{g\in\mathbb X_\beta: g>\epsilon g_0\},\quad \mathbb \GG_\alpha:=\mathbb \GG_\beta\cap\mathbb X_\alpha,\end{align*}
where ``$g>\epsilon g_0$'' means $g(X,X)>\epsilon g_0(X,X)$ for any tangent vector $X$.

We abbreviate the right hand side of equation \eqref{eq:mRDF} by $Q_{g_0}(g)g$. $Q_{g_0}(g)$ is a quasilinear elliptic operator. By a straightforward computation as in \cite[Lemma 3.1]{GIK02}, we have the following lemma.
\begin{lemma}\label{coef}
If we express $Q_{g_0}(g)g$ in terms of the first and second derivatives of $g$ in local coordinates, then
\begin{align}\label{eq:coef}
(Q_{g_0}(g)g)_{ij}& =a(x,g_0,g_0^{-1},g,g^{-1})_{ij}^{k\ell pq}\p_p\p_qg_{k\ell}\\
&\quad + b(x,g_0,g_0^{-1},\p g_0,g,g^{-1},\p g)_{ij}^{k\ell p}\p_p g_{k\ell} \notag \\
&\quad\quad +c(x,g_0^{-1},\p g_0,\p^2 g_0,g)_{ij}^{k\ell}g_{k\ell}. \notag
\end{align}
The coefficients $a, b, c$ depend smoothly on $x\in M^n$, and they are analytic functions of their remaining arguments.
\end{lemma}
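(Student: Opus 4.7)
The plan is to unpack the right-hand side of \eqref{eq:mRDF} term by term in a local coordinate patch and read off the coefficients. First I would expand $-2\rc(g)$ using the standard Christoffel-symbol formula, which gives
\[-2R_{ij}(g) = g^{k\ell}\bigl(\p_k\p_\ell g_{ij} + \p_i\p_j g_{k\ell} - \p_k\p_i g_{j\ell} - \p_k\p_j g_{i\ell}\bigr) + Q_{ij}(g^{-1},\p g),\]
where $Q_{ij}$ is quadratic in $\p g$ with coefficients polynomial in $g^{-1}$. Next I would expand the DeTurck term $P_{g_0}(g) = -2\delta^{\ast}_g(\tilde{g_0}\,\delta_g(G(g,g_0)))$ directly from the definition or, equivalently, by identifying it with $\mathcal L_{W}g$ for the DeTurck vector field $W^{k} = g^{pq}\bigl(\Gamma^{k}_{pq}(g)-\Gamma^{k}_{pq}(g_0)\bigr)$. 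Expanding the Lie derivative yields three types of contributions to $P_{g_0}(g)_{ij}$: second derivatives of $g$ contracted against polynomials in $(g,g^{-1},g_0,g_0^{-1})$; products of $\p g$ with $\p g$ or with $\p g_0$, having coefficients of the same type; and zero-order-in-$g$ terms of the form $\p^2 g_0$ multiplied by $g$ with coefficients rational in $g_0^{-1}$.

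The key step is then to combine the second-derivative contributions of $-2\rc(g)$ and $P_{g_0}(g)$. The DeTurck trick, which is the only nontrivial cancellation in the computation, asserts that the ``bad'' second-derivative combinations in the Ricci tensor cancel exactly the second-derivative terms produced by $\mathcal L_W g$ that are not of the form $g^{k\ell}\p_k\p_\ell g_{ij}$. Crucially, no derivatives of $g_0$ survive at the principal-symbol level, so the resulting coefficient $a^{k\ell pq}_{ij}$ depends only on $(x,g_0,g_0^{-1},g,g^{-1})$, as claimed. I would verify this by carefully matching indices, as in the standard DeTurck computation.

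Once the principal part is identified, the remaining first-derivative terms collect into $b$ with the stated dependence on $(x,g_0,g_0^{-1},\p g_0,g,g^{-1},\p g)$, and the remaining zeroth-order terms, including the $-2\lambda g_{ij}$ contribution from the curvature normalization and the $\p^2 g_0$ contributions from $P_{g_0}$, collect into $c$ with the stated dependence. Analyticity in the non-$x$ arguments is automatic because every coefficient is obtained by polynomial operations on $g, g^{-1}, g_0, g_0^{-1}$ together with finitely many derivatives of $g_0$, with the only denominators being positive powers of $\det g$ and $\det g_0$; smoothness in $x$ comes exclusively from the $x$-dependence of $g_0, \p g_0, \p^2 g_0$ on $M^n$. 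The main obstacle is the bookkeeping of the DeTurck cancellation; the rest is routine substitution.
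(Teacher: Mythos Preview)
Your proposal is correct and follows exactly the route the paper indicates: the paper does not give a detailed argument but simply refers to the ``straightforward computation as in \cite[Lemma 3.1]{GIK02},'' which is precisely the Christoffel-symbol expansion of $-2\rc(g)$ combined with the DeTurck cancellation you outline. Your sketch is a faithful unpacking of that citation, and your identification of the dependencies of $a$, $b$, $c$ (in particular that no derivatives of $g_0$ enter the principal part, while $\p^2 g_0$ appears only in $c$) matches the statement.
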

\begin{remark}
This lemma holds true for the operator $Q_{g_B}(g)$.
\end{remark}

We point out two typos in \cite[Lemma 3.1]{GIK02}: the coefficient $b$ missed the dependence on $\p g$, and the coefficient $c$ missed the dependence on $\p^2 g_0$. 

We are now ready for the proof of Theorem \ref{thm1}.
\begin{proof}[Proof of Theorem \ref{thm1}] Proposition \ref{spec} applied to $h\in\XX_1$ (or $h\in\EE_1$) implies the spectrum of the operator $A$, denoted by $\spec(A)$, satisfies $\spec(A)\subset (-\infty,-c/2]$ for $c>0$ provided $m\geq 2$. We then apply the Stability Theorem (Theorem \ref{stab}) to conclude Theorem \ref{thm1}. We omit the details since they are similar to the proof in \cite{GIK02}.
\end{proof}

\section{Dynamical stability of $(\mathbb{CH}^m, g_B)$}
In this section, $|\cdot|$, $\Vert\cdot\Vert$, $\nabla$, $\Gamma_{ij}^k$, and the distance function $d$ are computed with respect to the fixed background metric $g_B$, and recall $\tau>m/2$ is fixed.

For fixed $0<\sigma<\rho<1$, consider
\begin{align*}\begin{array}{ccccccc}
\mathbb X_1 & \overset{d}\hookrightarrow & \mathbb E_1 & \overset{d}\hookrightarrow &\mathbb X_0 & \overset{d}\hookrightarrow & \mathbb 
E_0\\
\verteq	& & \verteq & & \verteq & & \verteq\\
\mathfrak h^{2+\rho}_\tau & & \mathfrak h^{2+\sigma}_\tau & &\mathfrak h^{0+\rho}_\tau& & \mathfrak h^{0+\sigma}_\tau\\
\end{array}.\end{align*}
For fixed $\frac{1}{2}\leq\beta<\alpha<1-\frac{\rho}{2}$, define
\begin{align*}\mathbb X_\alpha:=(\mathbb X_0,\mathbb X_1)_\alpha,\quad \mathbb X_\beta:=(\mathbb X_0,\mathbb X_1)_\beta.
\end{align*}
By Theorem \ref{interp}, $\XX_\alpha\cong\mathfrak h^{2\alpha+\rho}_\tau$ and $\XX_\beta\cong \mathfrak h^{2\beta+\rho}_\tau$ with equivalence of the respective norms. Note that $\XX_\alpha \overset{d}\hookrightarrow \XX_\beta \overset{d}\hookrightarrow \mathfrak h^{1+\rho}_\tau$.

For a fixed $\hat g\in \XX_\beta$, \eqref{eq:coef} allows us to view $Q_{g_B}(\hat g)$ as a linear operator on $\XX_1$: if $h\in\XX_1$, then
\begin{align}\label{eq:linop}
(Q_{g_B}(\hat g) h)_{ij} & =a(x,g_B,g_B^{-1},\hat g,\hat g^{-1})_{ij}^{k\ell pq}\p_p\p_q h_{k\ell}\\
&\quad + b(x,g_B,g_B^{-1},\p g_B,\hat g,\hat g^{-1},\p\hat g)_{ij}^{k\ell p}\p_p h_{k\ell} \notag \\
&\quad\quad +c(x,g_B^{-1},\p g_B, \p^2 g_B, \hat g)_{ij}^{k\ell} h_{k\ell}. \notag
\end{align}
In fact, $Q_{g_B}(g_B)$ is $\Delta_{g_B}$ plus lower order terms with bounded coefficients, and in particular, $Q_{g_B}(g_B)g_B=0$. In the $\gamma$-basis defined in Section \ref{subsect3.2} and for fixed $i,j,k,\ell$, we can represent the coefficient $a_{ij}^{k\ell pq}$ of the second order term in $Q_{g_B}(g_B)$ as a matrix $a_\gamma$. If we let $\lambda$ and $\Lambda$ denote the smallet and the largest eigenvalues of the matrix $a_\gamma$ respectively, then $\Lambda/\lambda =1$ on any $B_R\subset\mathbb{CH}^m$. Since the coefficient $a$ depends analytically on $\hat g$, if $\hat g$ is sufficiently close to $g_B$ in $\XX_\beta$, then $Q_{g_B}(\hat g)$ satisfies $0< \hat c < \Lambda/\lambda < \hat C$ for some constants $\hat c,\hat C$ and for all $1\leq i,j,k,\ell\leq n=2m$. We call such $\hat g$ an \emph{admissible perturbation} of $g_B$.

For $0<\epsilon\ll 1$ to be chosen, cf. Lemma \ref{bddop2}, define an open set in $\XX_\beta$ by
\begin{align*}
\mathbb \GG_\beta := \{g\in\mathbb X_\beta \text{ is an admissible perturbation}: g>\epsilon g_B\},
\end{align*}
and define
\begin{align*}
\mathbb \GG_\alpha := \mathbb \GG_\beta\cap\mathbb X_\alpha.
\end{align*}
 
We denote by $ L_{\hat g}:=Q_{g_B}(\hat g)$ the unbounded linear operator on $\mathbb X_0$ with dense domain $D(L_{\hat g})=\mathbb X_1$. We extend $L_{\hat g}$ to $\hat L_{\hat g}$, which is now defined on $\mathbb E_0$ with dense domain $D(\hat L_{\hat g})=\EE_1$. If $X,Y$ are two Banach spaces, we denote by $\mathcal L(X,Y)$ the space of bounded linear operators from $X$ to $Y$.

\begin{lemma}\label{bddop1}${}$
\bn
\item $\hat g\mapsto L_{\hat g}$ is an analytic map $\GG_\beta \to \mathcal L(\XX_1,\XX_0)$.
\item $\hat g\mapsto \hat L_{\hat g}$ is an analytic map $\GG_\alpha \to \mathcal L(\EE_1,\EE_0)$.
\en 
\end{lemma}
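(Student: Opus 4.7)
The plan is to deduce both statements from two ingredients: (i) the coefficients $a, b, c$ appearing in \eqref{eq:linop} depend real-analytically on $\hat g$ when viewed in a suitable multiplier class, and (ii) pointwise multiplication by such a multiplier, composed with up to two spatial derivatives, yields a bounded linear map $\XX_1\to\XX_0$ (respectively $\EE_1\to\EE_0$). Since (1) and (2) are formally identical once $(\rho,\beta)$ is replaced by $(\sigma,\alpha)$, I will concentrate on (1).

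\emph{Step 1. Analyticity of the coefficient maps.} By Lemma \ref{coef}, the coefficients $a, b, c$ are real-analytic functions of their arguments $(\hat g, \hat g^{-1}, \p \hat g)$; the remaining dependence on $g_B$ and its derivatives is smooth and uniformly bounded with all derivatives, because $(\mathbb{CH}^m, g_B)$ is homogeneous. The admissibility condition $\hat g > \epsilon g_B$ keeps $\det\hat g$ bounded away from zero, so the fiberwise inversion $\hat g \mapsto \hat g^{-1}$ is rational in the entries of $\hat g$ and hence analytic on $\GG_\beta$. By Theorem \ref{interp} we have $\GG_\beta \subset \XX_\beta \cong \mathfrak h^{2\beta+\rho}_\tau \hookrightarrow \mathfrak h^{1+\rho}_\tau$, so $\p\hat g$ depends continuously linearly on $\hat g$ with values in $\mathfrak h^{0+\rho}_\tau$. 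Let $\mathfrak m^{0+\rho}$ denote the Banach algebra of uniformly $\rho$-H\"older bounded tensor fields on $\mathbb{CH}^m$ under pointwise multiplication; then $\mathfrak h^{k+\rho}_\tau \hookrightarrow \mathfrak m^{0+\rho}$ continuously for each $k \geq 0$, and the standard substitution theorem for analytic functions of elements of a Banach algebra (cf.~\cite{Am95}) shows that $\hat g \mapsto a(\hat g)$, $\hat g \mapsto b(\hat g,\p\hat g)$, $\hat g \mapsto c(\hat g)$ are analytic maps $\GG_\beta \to \mathfrak m^{0+\rho}$.

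\emph{Step 2. Boundedness and assembly.} Pointwise multiplication $\mathfrak m^{0+\rho} \times \mathfrak h^{0+\rho}_\tau \to \mathfrak h^{0+\rho}_\tau$ is a bounded bilinear map, as one checks annulus by annulus: the exponential weight $e^{N\tau}$ and the internal $d_x$-factors pass through an unweighted multiplier. The differentiation maps $\p^j : \XX_1 \to \mathfrak h^{0+\rho}_\tau$ for $j = 0, 1, 2$ are bounded; the cases $j=0,1$ are immediate from the norm definition with room to spare, while for $j = 2$ one exploits the overlap of the atlas $\{A_N\}$: every $x \in \mathbb{CH}^m$ lies in the interior of some annulus $A_N$ where $d_x$ is bounded below by a universal constant, transferring the $d_x^2$-weighted control of $\p^2 h$ on each annulus to a uniform estimate with only a fixed multiplicative loss. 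Composing the analytic coefficient maps of Step 1 with these bounded multilinear operations produces $\hat g \mapsto L_{\hat g}$ as an analytic map $\GG_\beta \to \mathcal L(\XX_1,\XX_0)$, proving (1). Part (2) follows by the same argument with $\rho$ replaced by $\sigma$ and $\GG_\beta$ by $\GG_\alpha$, using the embedding $\XX_\alpha \cong \mathfrak h^{2\alpha+\rho}_\tau \hookrightarrow \mathfrak h^{1+\sigma}_\tau$ (valid because $2\alpha + \rho > 1 + \sigma$).

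The principal technical obstacle lies in the weight bookkeeping for Step 2: one must verify term by term that, despite the mismatch between the $d_x^q$-weighted control available in $\XX_1$ and the absence of such weights in $\XX_0$, each piece of \eqref{eq:linop} indeed yields a bounded multiplier on $\XX_1 \to \XX_0$. The overlapping-annulus structure handles the supremum norms as indicated, while the analogous bounds on the H\"older seminorms $[\cdot]'_{0+\rho;A_N}$ follow from the product rule for difference quotients together with the Banach-algebra property of $\mathfrak m^{0+\rho}$.
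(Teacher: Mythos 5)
Your proposal is correct and, at its technical core, follows the same route as the paper: both arguments reduce to (i) analyticity of the coefficient maps $\hat g\mapsto a,b,c$, and (ii) a term-by-term boundedness estimate for $a\ast\p^2 h$, $b\ast\p h$, $c\ast h$ in the weighted H\"older norms, with the weight mismatch near $\p A_N$ resolved by passing to an overlapping annulus on which $d_x$ is bounded below. The difference is in the packaging. The paper carries out the pointwise estimates explicitly: a three-case analysis on the locations of $x,y$ relative to $\p A_N$, followed by a decomposition of the H\"older quotient of $a\ast\p^2 h$ into pieces $F_1$ (variation of the coefficient) and $F_2$ (variation of $\p^2 h$), yielding an operator-norm bound that is polynomial in $\Vert\hat g\Vert_{\XX_\beta}$ and hence analytic. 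You instead factor the same estimates through a multiplier Banach algebra $\mathfrak m^{0+\rho}$ and a substitution theorem, subsuming the $F_1+F_2$ split under the product rule for difference quotients and the case analysis under overlap of the atlas $\{A_N\}$. Nothing essential is gained or lost; the abstract framing is cleaner to state but defers the weight bookkeeping --- which you rightly flag as the principal technical obstacle --- to a verification that is precisely the explicit computation the paper performs. One small point worth making explicit in your write-up: the claimed boundedness of $\p^j:\XX_1\to\mathfrak h^{0+\rho}_\tau$ needs the annulus-overlap device for the H\"older seminorm $[\p^j h]'_{0+\rho;A_N}$ as well, not only for the sup-norm $|\p^j h|'_{0;A_N}$; when $d_{x,y}^N<1/2$ one must move $x,y$ (or an intermediate point $z$) to an adjacent annulus before comparing against $[h]'_{j+\rho;A_{N'}}$, which is exactly the paper's Cases 2 and 3.
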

\begin{proof} Let $\hat g\in \GG_\beta$ be given, we abbreviate equation \eqref{eq:linop} as
\begin{align*}
Q_{g_B}(\hat g)h = a(x,\hat g)\ast \p^2 h+b(x,\hat g,\p\hat g)\ast \p h+c(x,\hat g)\ast h,
\end{align*}
where $a$, $b$, $c$ are all analytic in $\hat g$, and are polynomials in $\hat g$, $\hat g^{-1}$, $g_B$, $g_B^{-1}$, and possibly in $\p g_B$, $\p^2 g_B$. $\hat g^{-1}\in \GG_\beta$ implies $\hat g^{-1}$ is controlled by $g_B^{-1}$. In what follows, $\pi$ denotes a polynomial that may change from line to line.

For $x,y\in A_N$ and $x\neq y$, we always have $d^N_{x,y}\leq 2$. There are three possibilities:
\begin{align*}
\emph{Case 1: }& d^N_{x,y}\geq 1/2,\\
\emph{Case 2: }& d^N_x,d^N_y < 1/2, \\
\emph{Case 3: }& \text{without loss of generality}, d^N_x < 1/2 \leq d^N_y.
\end{align*}

If we are in Case 2, either $x,y\in A_{N-1}$ with $d^{N-1}_{x,y}\geq 1$, or $x,y\in A_{N+1}$ with $d^{N+1}_{x,y}\geq 1$, then the estimates for Case 1 will apply to Case 2 up to a different constant. If we are in Case $3$, then there exists $z\in A_N$ with $d^N_z = 1/2$ such that either $x,z\in A_{N-1}, d^{N-1}_{x,z}\geq 1$ and $z,y\in A_{N}, d^N_{z,y}\geq 1/2$, or $x,z\in A_{N+1}, d^{N+1}_{x,z}\geq 1$ and $z,y\in A_{N}, d^N_{z,y}\geq 1/2$. So using the triangle inequality, the estimates for Case 1 will apply to Case 3 up to a different constant. Thus, it suffices to prove the lemma for Case 1.

We now check for Case 1. Consider $x\neq y\in A_N$ with $1/2\leq d^N_{x,y}\leq 2$. Writing $d^N_{x,y}$ as $d_{x,y}$ for short, we estimate
\begin{align*}
 d_{x,y}^\rho\frac{|a(x,\hat g)\ast \p^2 h(x) - a(y,\hat g)\ast \p^2 h(y)|}{d(x,y)^\rho}&\leq  d_{x,y}^\rho(F_1+F_2),
\end{align*}
where
\begin{align*}
 F_1 & = \frac{|a(x,\hat g)\ast \p^2 h(x) -a(y,\hat g)\ast \p^2 h(x)|}{d(x,y)^\rho},
\end{align*}
\begin{align*}
 F_2 & = \frac{|a(y,\hat g)\ast \p^2 h(x) -a(y,\hat g)\ast \p^2 h(y)|}{d(x,y)^\rho}.
\end{align*}
Since $1/2\leq d_{x,y}\leq 2$ and $\XX_\beta \overset{d}\hookrightarrow \mathfrak h^{1+\rho}_\tau \overset{d}\hookrightarrow \mathfrak h^{0+\rho}_\tau $, we have
\begin{align*}
 d_{x,y}^\rho F_1 & \leq d_{x,y}^\rho\left( d_{x,y}^{-\rho}[a(\hat g)]'_{0+\rho; A_N}d_x^{-2}|h|'_{2;A_N}\right)\\
& \lesssim e^{-2N\tau}\Vert \pi(\hat g)\Vert_{0+\rho;\tau} \Vert h \Vert_{2+\rho;\tau}\\
& \lesssim e^{-2N\tau} \pi(\Vert\hat g\Vert_{\XX_\beta}) \Vert h \Vert_{2+\rho;\tau},
\end{align*}
and
\begin{align*}
d_{x,y}^\rho F_2 & \leq d_{x,y}^\rho\left(|a(\hat g)|'_{0;A_N} d_{x,y}^{-2-\rho} [h]'_{2+\rho;A_N}\right)\\
& \lesssim  e^{-2N\tau}\Vert \pi(\hat g)\Vert_{0+\rho;\tau} \Vert h \Vert_{2+\rho;\tau}\\
& \lesssim  e^{-2N\tau} \pi(\Vert\hat g\Vert_{\XX_\beta}) \Vert h \Vert_{2+\rho;\tau}.
\end{align*}
So we have
\begin{align*}
\sup_{N\in\NN} e^{N\tau }[h]'_{0+\rho;A_N} & \lesssim \pi(\Vert \hat g\Vert_{\XX_\beta})\Vert h\Vert_{2+\rho;\tau}.
\end{align*}
Likewise, we have
\begin{align*}
\sup_{N\in\NN} e^{N\tau }|h|'_{0;A_N} & \lesssim \pi(\Vert \hat g\Vert_{\XX_\beta})\Vert h\Vert_{2+\rho;\tau}.
\end{align*}
The above estimates imply
\begin{align*}
\Vert a(x,\hat g)\ast \p^2 h \Vert_{0+\rho;\tau}&\lesssim \pi(\Vert \hat g\Vert_{\XX_\beta}) \Vert h\Vert_{2+\rho;\tau}.
\end{align*} 
Similarly, we have
\begin{align*}
\Vert b(x,\hat g,\p\hat g)\ast \p h \Vert_{0+\rho;\tau}&\lesssim \Vert b(x,\hat g,\p\hat g)\Vert_{0+\rho;\tau}\Vert h\Vert_{1+\rho;\tau}\\
&\lesssim \pi(\Vert \hat g \Vert_{0+\rho;\tau}+\Vert \hat g \Vert_{1+\rho;\tau})\Vert h\Vert_{2+\rho;\tau}\\
&\lesssim \pi(\Vert \hat g\Vert_{\XX_\beta})\Vert h\Vert_{2+\rho;\tau},\\
\Vert c(x,\hat g)\ast h \Vert_{0+\rho;\tau}& \lesssim \pi(\Vert \hat g\Vert_{\XX_\beta})\Vert h\Vert_{2+\rho;\tau}.
\end{align*} 
Putting everything together, we see
\begin{align*}
\Vert L_{\hat g} h\Vert_{\XX_0}=\Vert Q_{g_B}(\hat g) h\Vert_{\XX_0}&\lesssim \Vert h\Vert_{\XX_1}.
\end{align*}
Thus, $\hat g\mapsto L_{\hat g}$ is an analytic map from $\GG_\beta$ to $\mathcal L(\XX_1, \XX_0)$, which proves part (1). Part (2) is proved analogously.
\end{proof}

Given a Banach space $X$, a linear operator $A:D(A)\subset X\to X$ is called \emph{sectorial} if there are constants $\omega\in\RR$, $\beta\in(\pi/2, \pi)$, and $M>0$ such that 
\bn
\item The resolvent set $\rho(A)$ contains 
\begin{align*}
S_{\beta,\omega}:=\{\lambda\in\CC:\lambda\neq\omega,|\arg(\lambda-\omega)|<\beta\}, 
\end{align*}
\item $\Vert (\lambda I-A)^{-1}\Vert_{\mathcal L(X,X)}\leq \frac{M}{|\lambda-\omega|}, \text{ for all }\lambda\in S_{\beta,\omega}$.
\en

\begin{lemma}\label{bddop2}
$\hat L_{g_B}$ is sectorial, and there exists $\epsilon >0$ in the definition of $\GG_\beta$ such that for each $\hat g \in \GG_\alpha$, $\hat L_{\hat g}$ generates a strongly continuous analytic semigroup on $\mathcal L(\EE_0,\EE_0)$.
\end{lemma}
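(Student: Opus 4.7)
The plan is to first establish sectoriality of $\hat L_{g_B}$ directly on $\EE_0$ with dense domain $\EE_1$, and then extend to admissible perturbations via the classical perturbation theorem for sectorial operators.

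For the first step, the key point is that $\hat L_{g_B} = Q_{g_B}(g_B)$ is the rough Laplacian on symmetric two-tensors plus zero-order terms, as noted just before Lemma \ref{bddop1}. Because $(\mathbb{CH}^m, g_B)$ is a symmetric space of noncompact type --- equivalently, a homogeneous space with infinite injectivity radius, bounded curvature, and parallel curvature tensor --- the coefficients of $\hat L_{g_B}$ together with all of their covariant derivatives are uniformly bounded. I would prove the sectorial resolvent estimate
\begin{align*}
\| (\lambda I - \hat L_{g_B})^{-1} \|_{\mathcal L(\EE_0, \EE_0)} \leq \frac{M}{|\lambda - \omega|}, \qquad \lambda \in S_{\beta, \omega},
\end{align*}
by following the strategy of Lunardi \cite{Lun95}: derive interior Schauder estimates on each annulus $A_N$, uniformly in $N$ by homogeneity of $\mathbb{CH}^m$ and uniform boundedness of the Christoffel symbols $\Gamma_{ij}^k$ of $g_B$ in geodesic normal coordinates, and then glue these local estimates to a global one in $\mathfrak h^{0+\sigma}_\tau$. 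The $e^{N\tau}$-weighting causes no loss during the gluing because the weights associated with adjacent annuli $A_{N-1}$, $A_N$, $A_{N+1}$ differ only by the bounded factor $e^\tau$, a point already used implicitly in the estimates for Lemma \ref{bddop1}. Density of $\EE_1$ in $\EE_0$ is immediate, since both spaces are defined as completions of $C_c^\infty$ symmetric two-tensors in their respective norms.

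For the second step, I write $\hat L_{\hat g} = \hat L_{g_B} + (\hat L_{\hat g} - \hat L_{g_B})$ and invoke Lemma \ref{bddop1}(2), which gives analyticity of $\hat g \mapsto \hat L_{\hat g}$ from $\GG_\alpha$ into $\mathcal L(\EE_1, \EE_0)$ and vanishing of the difference at $\hat g = g_B$. Choosing $\epsilon > 0$ small enough in the definition of $\GG_\beta$ forces every $\hat g \in \GG_\alpha \subset \GG_\beta$ to be both admissible (uniform ellipticity constants $\hat c$, $\hat C$ independent of $\hat g$) and close to $g_B$ in the $\XX_\alpha$-norm; hence $\| \hat L_{\hat g} - \hat L_{g_B} \|_{\mathcal L(\EE_1, \EE_0)}$ can be made arbitrarily small, uniformly in $\hat g$. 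The classical perturbation theorem for generators of analytic semigroups (e.g., \cite[Proposition 2.4.2]{Lun95}) then yields sectoriality of $\hat L_{\hat g}$ with sector parameters that can be chosen uniformly on $\GG_\alpha$, and hence $\hat L_{\hat g}$ generates a strongly continuous analytic semigroup on $\EE_0$.

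The principal obstacle is the first step --- establishing the sectorial resolvent estimate on the complete noncompact $\mathbb{CH}^m$ in the weighted little H\"older spaces $\mathfrak h^{k+\sigma}_\tau$. Standard references treat $\RR^n$ or compact manifolds with ordinary H\"older norms. The noncompact weighted case must marry interior Schauder estimates on geodesic balls of fixed radius (available thanks to bounded geometry) with the explicit annular covering $\{A_N\}$ and the distance-to-boundary weights $d^N_x$ built into the definition of $\mathfrak h^{k+\alpha}_\tau$. Verifying that the exponential weight $e^{N\tau}$ propagates through the resolvent without loss is the delicate part, and should proceed in the same spirit as the annular bounds used in the proof of Lemma \ref{bddop1} and the interpolation statement Theorem \ref{interp}.
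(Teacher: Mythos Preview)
Your proposal is correct and follows essentially the same route as the paper: establish sectoriality of $\hat L_{g_B}$ via uniform interior Schauder estimates on the annuli $A_N$ (with constants independent of $N$, thanks to bounded geometry), pass to the weighted norm by multiplying through by $e^{N\tau}$, and then transfer sectoriality to $\hat L_{\hat g}$ by the perturbation result \cite[Proposition~2.4.2]{Lun95} combined with Lemma~\ref{bddop1}(2). The paper additionally remarks that $\spec(\Delta_{g_B})\subset(-\infty,0]$ so that $\spec(\hat L_{g_B})$ lies in a left half-plane, but this is subsumed by the resolvent estimate you aim for.
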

\begin{proof}
Since $\hat L_{g_B}$ is $\Delta_{g_B}$ plus lower order terms, it is a strongly elliptic operator. The spectrum of $\Delta_{g_B}$ on $\EE_1$ is $(\infty,0]$, so that $\spec(L_{g_B})\subset (-\infty,\lambda_0)$ for some $\lambda_0\in\RR$, since the lower order terms may affect the spectrum by a bounded amount. Then by an argument similar to the proof of \cite[Lemma 3.4]{GIK02}, $\hat L_{g_B}$ is sectorial by the Schauder estimates for $\hat L_{g_B}$ with respect to the $\mathfrak h^{k+\rho}_\tau$-norms. To see such estimates hold, one first uses the standard Schauder estimates for $\hat L_{g_B}$ on $A_N$ that
\begin{align*}
 \sum\limits_{\ell=0}^{2}|h|'_{\ell; A_N}+[h]'_{2+\rho; A_N} \leq C\left ( |\hat L_{g_B} h|'_{0; A_N}+[\hat L_{g_B} h]'_{\rho; A_N} \right),
\end{align*}
where $C$ is a constant depending on the complex dimension $m$, the H\"{o}lder exponent $\rho$, and the ratio $\Lambda/\lambda$, but not on $N$ \cite{GT01}. Multiplying both sides of this inequality by $e^{N\tau}$ and taking the supremum over $N\in\NN$, then
\begin{align*}
 \Vert h \Vert_{2+\rho;\tau}\lesssim \Vert \hat L_{g_B} h\Vert_{0+\rho;\tau}.
\end{align*}
So $\hat L_{g_B}$ is sectorial, and since $\hat L_{g_B}$ is densely defined by construction, it generates a strongly continuous analytic semigroup by a standard characterization \cite[pp. 34]{Lun95}. By part (2) of Lemma \ref{bddop1}, we can choose $\epsilon >0$ in the definition of $\GG_\beta$ so small that for $\hat g\in \GG_\alpha$, we have
\begin{align*}
 \Vert \hat L_{\hat g}-\hat L_{g_B} \Vert_{\mathcal L(\EE_1,\EE_0)} < \frac{1}{M+1},
\end{align*}
for the constant $M>0$ in the definition of sectorial operator corresponding to $\hat L_{g_B}$. So the perturbation $\hat L_{\hat g}$ is sectorial by \cite[Proposition 2.4.2]{Lun95}, and hence generates a strongly continuous analytic semigroup on $\mathcal L(\EE_0,\EE_0)$. \end{proof}

We are now ready to prove Theorem \ref{thm2}.
\begin{proof}[Proof of Theorem \ref{thm2}] We verify conditions (B1)--(B7) in Theorem \ref{stab}.

(B1): By construction, $\XX_1\overset{d}\hookrightarrow \XX_0$ and $\EE_1\overset{d}\hookrightarrow \EE_0$ are continuous dense inclusions of Banach spaces. For fixed $\frac{1}{2}\leq\beta<\alpha<1-\frac{\rho}{2}$, $\XX_\alpha, \XX_\beta$ are continuous interpolation spaces corresponding to the inclusion $\XX_1\overset{d}\hookrightarrow \XX_0$.

(B2): Equation \eqref{eq:mRDF} is an autonomous quasilinear parabolic equation. There exists a positive integer $K$ such that for all $\hat g$ in the open set $\GG_\beta\subset \XX_\beta$, the domain $D(L_{\hat g})$ of $L_{\hat g}$ is equal to $\XX_1$, and the map $\hat g\to L_{\hat g}|_{\XX_1}$ belongs to $C^K(\GG_\beta,\mathcal L(\XX_1,\XX_0))$ by part (1) of Lemma \ref{bddop1}. In fact, we can let $K=\infty$.

(B4): By Lemma \ref{bddop2}, for each $\hat g\in \GG_\alpha$, $\hat L_{\hat g}|_{\EE_1}\in\mathcal L(\EE_1,\EE_0)$ generates a strongly continuous analytic semigroup on $\mathcal L(\EE_0,\EE_0)$.

(B3), (B5): By construction, for each $\hat g\in \GG_\beta$, $\hat L_{\hat g}$ is an extension of $L_{\hat g}$ to a domain $D(\hat L_{\hat g})$ that is equal to $\EE_0$. For each $\hat g\in \GG_\alpha, L_{\hat g}$ is the part of $\hat L_{\hat g}$ in $\XX_0$.

(B6): Recall that $0<\sigma<\rho<1$ were fixed. For each $\hat g\in \GG_\alpha$, there exists $\theta=\frac{\rho-\sigma}{2}\in(0,1)$ such that, by Theorem \ref{interp},
\begin{align*}
 (\EE_0, D(\hat L_{\hat g}))_\theta=(\mathfrak h^{0+\sigma}_\tau,\mathfrak h^{2+\sigma}_\tau)_\theta\cong \mathfrak h^{0+\rho}_\tau= X_0
\end{align*}
with equivalence of the respective norms.

Define the set $(\EE_0, D(\hat L_{\hat g}))_{1+\theta}:=\{g\in D(\hat L_{\hat g}):\hat L_{\hat g}(g)\in (\EE_0, D(\hat L_{\hat g}))_\theta\}$, and endow it with the graph norm of $\hat L_{\hat g}$ with respect to the space $(\EE_0,D(\hat L_{\hat g}))_\theta$, which we just showed to be equivalent to the space $\XX_0$. This graph norm is $\Vert \cdot\Vert_{\XX_0}+\Vert \hat L_{\hat g}(\cdot)\Vert_{\XX_0}$.

By definition, $\XX_1 \overset{d}\hookrightarrow (\EE_0, D(\hat L_{\hat g}))_{1+\theta}$. We claim the respective norms are equivalent. Indeed, $\Vert \cdot\Vert_{\XX_0}+\Vert \hat L_{\hat g}(\cdot)\Vert_{\XX_0}\lesssim \Vert \cdot\Vert_{\XX_1}$ since $\XX_1 \overset{d}\hookrightarrow \XX_0$. The opposite inequality, $\Vert \cdot\Vert_{\XX_1}\lesssim \Vert \cdot\Vert_{\XX_0}+\Vert \hat L_{\hat g}(\cdot)\Vert_{\XX_0}$, follows from Schauder estimates for $\hat L_{\hat g}$ with respect to the weighted H\"{o}lder norms $\Vert \cdot\Vert_{k+\rho;\tau}$, similar to those used in the proof of Lemma \ref{bddop2}. So the claim is true. Therefore, there is a Banach space isomorphism $(\EE_0, D(\hat L_{\hat g}))_{1+\theta}\cong\XX_1$ with equivalence of the respective norms.

(B7): This just follows from Lemma \ref{a7}.

To finish the proof, for fixed $\rho\in(0,1)$ and $\alpha\in (\frac{1}{2},1-\frac{\rho}{2})$, there exists $\eta\in(\rho,1)$ such that, by Theorem \ref{interp},
\begin{align*}
 \XX_\alpha \cong \fh^{2\alpha+\rho}_{\tau} =: h^{1+\eta}_\tau
\end{align*}
with equivalence of the respective norms. By Corollary \ref{eqholds2}, $\spec(A)\subset (-\infty,-c/2]$ for $c>0$ in complex dimension $m\geq 2$. Therefore, we can apply the Stability Theorem (Theorem \ref{stab}) to conclude Theorem \ref{thm2}.
\end{proof}

\section{Interpolation properties}\label{sect7}
Let $X, Y$ be two Banach spaces with $Y\overset{d}\hookrightarrow X$. We review the $K$-method in interpolation theory, cf. \cite{Am95, Lun09, Trie78}. 

For every $h\in X+Y$ and $t>0$, set 
\begin{align*}
 K(t,h)=K(t,h,X,Y) := \inf_{\substack{h=a+b,\\ a\in X,\,b\in Y}}\left(\Vert a\Vert_X+t\Vert b\Vert_Y \right).
\end{align*}
Let $\theta\in(0,1)$, $p\in[1,\infty]$. We define the \emph{real} interpolation space between $X$ and $Y$ by
\begin{align*}
 (X,Y)_{\theta,p} := \{h\in X+Y:t\mapsto t^{-\theta} K(t,h)\in L^p_\ast(0,\infty)\},
\end{align*}
where $L^p_\ast(0,\infty)$ is the $L^p$ space with respect to the measure $dt/t$. We abbreviate $L^p_\ast(0,\infty)$ as $L^p_\ast$. In particular, $\Vert \cdot\Vert_{L^\infty_\ast} = \Vert\cdot\Vert_{L^\infty}$. The norm of $h\in (X,Y)_{\theta,p}$ is given by
\begin{align*}
 \Vert h\Vert_{\theta,p}=\Vert h\Vert_{(X,Y)_{\theta,p}}:=\Vert t^{-\theta} K(t,h)\Vert_{L^p_\ast}.
\end{align*}
We define the \emph{continuous} interpolation space between $X$ and $Y$ by
\begin{align*}
 (X,Y)_{\theta}:=\{h\in X+Y:\limt{t}{0+} t^{-\theta} K(t,h)=0\}.
\end{align*}
Since for every $h\in X+Y$, $t\mapsto K(t, h)$ is concave and hence continuous in $(0,\infty)$, $(X,Y)_{\theta}$ is a closed subspace of $(X,Y)_{\theta,\infty}$, and it is endowed with the $(X,Y)_{\theta,\infty}$-norm, i.e., $\Vert\cdot\Vert_\theta=\Vert\cdot\Vert_{\theta,\infty}$.
\begin{remark}\label{b1}
Since $Y\overset{d}\hookrightarrow X$, we can equivalently define $(X,Y)_\theta$ to be the closure of $Y$ in the $(X,Y)_{\theta,\infty}$-norm \cite{Sim95}.
\end{remark}

We now prove Lemma \ref{a7}.
\begin{proof}[Proof of Lemma \ref{a7}] We use the general fact \cite[Corollary 1.7]{Lun09} that given two Banach spaces $X,Y$ with $Y\overset{d}\hookrightarrow X$, if $\theta\in(0,1)$ and $p\in[1,\infty]$, then there exists a constant $C(\theta, p)>0$ such that for all $y\in Y$,
\begin{align*}
 \Vert y\Vert_{\theta,p} \leq C(\theta,p) \Vert y \Vert^\theta_X \Vert y\Vert^{1-\theta}_Y.
\end{align*}
Taking $Y:=\mathfrak h^{\ell+\beta}\overset{d}\hookrightarrow\mathfrak h^{k+\alpha} =:X$, and noting when $p=\infty$, $\Vert\cdot\Vert_\theta = \Vert\cdot\Vert_{\theta,\infty}$, we obtain \eqref{eq:a7-1}.
\end{proof}

In the rest of this section, we prove Theorem \ref{interp}.
\begin{lemma}\label{interpo1}
Let $m\in\NN$ and fix $\tau>m/2$, then for each $\theta\in(0,1)$, we have the Banach space isomorphism
\begin{align*}
 (\mathfrak h^{0}_{\tau}, \mathfrak h^{1}_{\tau})_{\theta}\cong\mathfrak h^{\theta}_{\tau}
\end{align*}
with equivalence of the respective norms.
\end{lemma}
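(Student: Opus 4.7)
I will establish the isomorphism by proving the two continuous inclusions $\fh^\theta_\tau \hookrightarrow (\fh^0_\tau, \fh^1_\tau)_\theta$ and $(\fh^0_\tau, \fh^1_\tau)_\theta \hookrightarrow \fh^\theta_\tau$ separately, adapting the classical Triebel-type argument for the unweighted interpolation $(\fh^0, \fh^1)_\theta \cong \fh^\theta$ on bounded Euclidean domains. Two new features appear: the exponential annular weight $e^{N\tau}$ and the interior weights $d_x = d(x, \partial A_N)$ in the seminorms $|h|'_{q;A_N}$. The key structural observation that makes the argument go through is that the cover $\{A_N\}$ is chosen with width $4$, so adjacent annuli overlap in a band of width at least $3$; consequently every $x \in \mathbb{CH}^m$ has $d_x \geq 1$ in at least one of the (at most four) annuli containing it, which lets me reduce the weighted bounds to uniform-in-$N$ local estimates on a fixed-geometry annulus.

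For the easy direction, $(\fh^0_\tau, \fh^1_\tau)_\theta \hookrightarrow \fh^\theta_\tau$, fix $x \neq y \in A_N$ and take any admissible decomposition $h = a + b$. A short manipulation gives
\[
|h(x) - h(y)| \;\leq\; 2 e^{-N\tau} \|a\|_{0;\tau} + d(x,y)\, e^{-N\tau} d_{x,y}^{-1} \|b\|_{1;\tau},
\]
where the $d_{x,y}^{-1}$ arises from converting $|b|'_{1;A_N}$ into a pointwise bound on $|\partial b|$ along a path from $x$ to $y$ staying in $\{z \in A_N : d_z \gtrsim d_{x,y}\}$. Multiplying by $e^{N\tau}\, d_{x,y}^\theta / d(x,y)^\theta$ and choosing $t = d(x,y)$ bounds the weighted Hölder quotient by $d(x,y)^{-\theta} K(d(x,y), h)$; taking the infimum over decompositions and the supremum over $x, y, N$ yields $\|h\|_{0+\theta;\tau} \lesssim \|h\|_{(\fh^0_\tau, \fh^1_\tau)_\theta}$. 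The little-Hölder vanishing $\lim_{t \to 0^+} F_h(t) = 0$ then follows immediately from the defining property $\lim_{t \to 0^+} t^{-\theta} K(t,h) = 0$ of the continuous interpolation space.

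For the converse, $\fh^\theta_\tau \hookrightarrow (\fh^0_\tau, \fh^1_\tau)_\theta$, I smooth $h \in \fh^\theta_\tau$ by interior mollification. Fix $\varphi \in C_c^\infty(B_1 \subset \RR^{2m})$ with $\int \varphi = 1$ and, for $0 < t \ll 1$, set
\[
h_t(x) \;:=\; \int_{|\eta| \leq 1} \varphi(\eta)\, P^{-1}_{x, \exp_x(t\eta)}\, h(\exp_x(t\eta))\, d\eta,
\]
with parallel transport $P$ along short geodesics. On $\{d_x \geq 2t\} \cap A_N$ standard estimates give $|h - h_t|(x) \lesssim t^\theta d_x^{-\theta} [h]'_{0+\theta;A_N}$ and $|\partial h_t|(x) \lesssim t^{\theta-1} d_x^{-\theta} [h]'_{0+\theta;A_N}$; on the boundary layer $\{d_x < 2t\} \cap A_N$ the same $x$ lies deep inside $A_{N\pm 1}$ (where $d_x$ is comparable to $1$), and I obtain analogous bounds with adjacent seminorms $[h]'_{0+\theta;A_{N\pm 1}}$. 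Weighting by $e^{N\tau}$, absorbing the harmless factor $e^{\pm\tau}$ into a constant, and taking $\sup_N$ yields $\|h - h_t\|_{0;\tau} \lesssim t^\theta \|h\|_{0+\theta;\tau}$ and $\|h_t\|_{1;\tau} \lesssim t^{\theta-1} \|h\|_{0+\theta;\tau}$, hence $K(t,h) \lesssim t^\theta \|h\|_{\fh^\theta_\tau}$. That $h_t \in \fh^1_\tau$ (not merely in $W^{1,\infty}$) follows by further truncating $h_t$ with smooth cutoffs supported in balls $B_R$ and using that $\tau > m/2$ kills the tail via Lemma \ref{dom}. The infinitesimal condition $t^{-\theta} K(t,h) \to 0$ as $t \to 0^+$ is then deduced from the property \eqref{eq:lholder} by first approximating $h$ in $\fh^\theta_\tau$ by $C_c^\infty$ tensors, for which the above estimates become quantitative.

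The main obstacle is the boundary layer $\{d_x < 2t\}$ near $\partial A_N$: a fixed-scale mollifier formally requires $t \leq d_x$ to avoid crossing the annulus boundary, which would otherwise force a variable scale $t = t(x) \sim d_x$ and badly complicate the derivative estimates. My resolution exploits the specific geometry of the cover: because consecutive $A_N$ overlap in a band of width at least $3$, any point close to $\partial A_N$ lies deep inside $A_{N-1}$ or $A_{N+1}$, and its bound there (with $d_x \gtrsim 1$) controls what is needed in $A_N$ at the cost of a bounded multiplicative factor, which the exponential annular weight absorbs. The careful bookkeeping of these adjacent-annulus contributions, together with verifying that the mollifier estimates respect the correct powers of $d_x$ and $t$, is where the technical heart of the proof resides.
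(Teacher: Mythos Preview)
Your proposal is correct and takes essentially the same approach as the paper: the ``easy'' direction $(\fh^0_\tau,\fh^1_\tau)_\theta \hookrightarrow \fh^\theta_\tau$ via a path-integral bound on $b$ combined with $K(t,h)\leq t^\theta\|h\|_\theta$, and the ``hard'' direction via mollification at scale $t$, with the boundary-layer obstruction near $\partial A_N$ resolved by passing to an adjacent overlapping annulus where $d_x\gtrsim 1$. The only cosmetic differences are that the paper writes its mollifier as an intrinsic average
\[
b_t(x)=\frac{1}{C_t t^n}\int_{\{d(x,y)\leq t\}} h(y)\,\zeta\!\left(\frac{d(x,y)}{t}\right)d\mu_{g_B}(y)
\]
rather than via $\exp_x$ and parallel transport, and that the paper absorbs your boundary-layer case analysis into the single observation that for fixed $x$ and $d(x,y)\leq t\leq 1$ one can always choose $A_N\supset B_1(x)$ with $d_{x,y}\geq 1/2$.

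One minor correction: your invocation of Lemma~\ref{dom} to conclude $h_t\in\fh^1_\tau$ is off-target. That lemma concerns the embedding $\fh^{k+\alpha}_\tau\hookrightarrow W_1^2(\mathbb{CH}^m)$ and genuinely requires $\tau>m/2$, whereas approximability of $h_t$ by $C_c^\infty$ tensors in $\|\cdot\|_{1;\tau}$ follows directly from the exponential weight built into the norm (for any $\tau>0$) via spatial cutoffs. This does not affect the validity of your overall argument.
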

\begin{proof} We let $X:=\mathfrak h^{0}_{\tau}$, $Y:=\mathfrak h^{1}_{\tau}$, and $Z:=\mathfrak h^{\theta}_{\tau}$. Note that $Y\overset{d}\hookrightarrow Z \overset{d}\hookrightarrow X$.

We first show $(X,Y)_\theta\subset Z$, i.e., $(\mathfrak h^{0}_{\tau}, \mathfrak h^{1}_{\tau})_{\theta}\subset \mathfrak h^{\theta}_{\tau}$.

Let $h\in (X,Y)_\theta\subset X$. If $b\in Y\subset X$, then $a:=h-b\in X$. So we can always decompose $h=a+b$ with $a\in X$ and $b\in Y$, and
\begin{align*}
\Vert h\Vert_X & \leq \Vert a\Vert_X + \Vert b\Vert_X \lesssim \Vert a\Vert_X + \Vert b\Vert_Y,
\end{align*}
since $Y\overset{d}\hookrightarrow X$. Taking the infimum over all such decompositions, then
\begin{align*} 
\Vert h\Vert_X & \leq K(1,h)\leq \Vert h\Vert_\theta,
\end{align*}
since for $h\in (X,Y)_{\theta}$ we have the useful fact that for all $t>0$, $K(t,h)\leq t^\theta\Vert h \Vert_{\theta}$.

If $x\neq y\in A_N$ for some $N$, then we can find a curve $\gamma$ in $A_N$ connecting $x$ and $y$ such that $d_z\geq d_{x,y}$ for all $z\in \gamma$ and $\text{Length}(\gamma)\lesssim d(x,y)$. We have, in the single geodesic coordinate chart covering $\mathbb{CH}^m$,
\begin{align*}
|a(x)-a(y)| & \leq |a(x)|+|a(y)| \lesssim e^{-N\tau}\Vert a\Vert_X,\\
|b(x)-b(y)| &\leq \int_\gamma \left|\langle \text{grad } b,\dot{\gamma} \rangle\right|.
\end{align*}
Given a decomposition $h=a+b$ with $a\in X$, $b\in Y$, we see
\begin{align*}
 d_{x,y}^\theta|h(x)-h(y)|  & \leq d_{x,y}^\theta|a(x)-a(y)| + d_{x,y}^\theta|b(x)-b(y)|\\
& \lesssim d_{x,y}^\theta e^{-N \tau } \Vert a\Vert_X + \int_\gamma \frac{d_{x,y}^\theta}{d_z^\theta} d_z^\theta\left|\text{grad }b(z)\right|\\
&\lesssim e^{-N\tau} \Vert a\Vert_X + e^{-N\tau} \Vert b\Vert_Y d(x,y)\\
& \lesssim e^{-N\tau} K(d(x,y),h)\\
& \lesssim  e^{-N\tau} d(x,y)^\theta\Vert h\Vert_\theta.
\end{align*}
So $[h]'_{0+\theta;A_N}\leq e^{-N\tau} \Vert h\Vert_\theta$, and it follows that
\begin{align*}
\Vert h\Vert_Z&=\Vert h\Vert_X+\sup_{N\in\NN}e^{N\tau} [h]'_{0+\theta;A_N} \\
&\lesssim \Vert h \Vert_\theta + \Vert h\Vert_\theta.
\end{align*}
Therefore, $h\in Z$ with $\Vert h\Vert_Z\lesssim \Vert h\Vert_\theta$.

We now show $(X,Y)_\theta\supset Z$, i.e., $(\mathfrak h^{0}_{\tau}, \mathfrak h^{1}_{\tau})_{\theta}\supset \mathfrak h^{\theta}_{\tau}$.

Let $h\in Z\subset X$. We will suitably decompose $h=a_t+b_t$ for $t\in(0,\infty)$ with $a_t\in X$, $b_t\in Y$. When $t\in[ 1, \infty)$, let $a_t = h$, $b_t=0$, then $K(t,h)\leq \Vert h\Vert_X$, and hence
\begin{align*}
 t^{-\theta}K(t,h)\leq t^{-\theta}\Vert h\Vert_X \leq \Vert h\Vert_X\lesssim \Vert h\Vert_Z
\end{align*}
because $Z\overset{d}\hookrightarrow X$.

We now consider the case $t\in(0,1]$.

For each fixed $x\in\mathbb{CH}^m$, let $\zeta(y)=\zeta\left(d(x,y)\right)$ be a smooth positive bump function compactly supported in $\{y\in\mathbb{CH}^m : d(x,y)\leq 1\}$ with unit mass:
\begin{align*}
 1 = \int_{\mathbb{CH}^m} \zeta\left(d(x,y)\right) d\mu_{g_B}(y).
\end{align*}
Let $B_t$ denote a geodesic ball of radius $t$ in $\mathbb{CH}^m$ and $V(B_t)$ its volume. Recall that $g_B$ has constant scalar curvature $R<0$, and that the asymptotic expansion of $V(B_t)$ with respect to $t$ is
\begin{align*}
 \frac {V(B_t)}{\omega_{2m}t^{2m}} = 1 - \frac{R}{6(2m+2)}t^2 + O(t^3),
\end{align*}
where $\omega_{2m}$ is the volume of the unit ball in Euclidean $\RR^{2m}$. Then under the homothetic scaling $d(x,y)\mapsto d(x,y)/t$, there exist constants $\bar c, \bar C>0$ such that for $t \in (0,1]$,
\begin{align*}
 \bar c \leq C_t:=\frac{1}{t^n}\int_{\{d(x,y)\leq t\}} \zeta\left(\frac{d(x,y)}{t}\right) d\mu_{g_B}(y) \leq \bar C.
\end{align*}

Define 
\begin{align*}
b_t(x)&:= \frac{1}{C_t}\left[\frac{1}{t^n}\int_{\{d(x,y)\leq t\}} h(y) \zeta\left(\frac{d(x,y)}{t}\right)d\mu_{g_B}(y)\right],\\
a_t(x) &:= h(x)-b_t(x).
\end{align*}
For fixed $x\in\mathbb{CH}^m$ and $y\in\{d(x,y)\leq t\leq 1\}$, we can find some $A_N$ such that $B_1(x)\subset A_N$ with $d_{x,y}\geq 1/2$, so then $|h(x)-h(y)|/d(x,y)^\theta \leq e^{-N\tau}d_{x,y}^{-\theta} \Vert h\Vert_Z \lesssim e^{-N\tau} \Vert h\Vert_Z$.

We now estimate
\begin{align*}
|a_t(x)| & = |h(x)-b_t(x)| = \left| \frac{C_t}{C_t}h(x) - b_t(x)\right |\\ 
& \leq  \frac{1}{C_t}\frac{1}{t^n}\int_{\{{d(x,y)\leq t}\}} |h(x)-h(y)| \left|\zeta\left(\frac{d(x,y)}{t}\right)\right| d\mu_{g_B}(y) \\
& =  \frac{1}{C_t}\frac{1}{t^n}\int_{\{0<{d(x,y)\leq t}\}} \frac{|h(x)-h(y)|}{d(x,y)^\theta} d(x,y)^\theta \left|\zeta\left(\frac{d(x,y)}{t}\right)\right| d\mu_{g_B}(y) \\
& \lesssim \frac{1}{t^n}\int_{\{{d(x,y)\leq t}\}} e^{-N\tau} \Vert h \Vert_Z d(x,y)^{\theta} \left|\zeta\left(\frac{d(x,y)}{t}\right)\right| d\mu_{g_B}(y) \\
& = e^{-N\tau}\Vert h \Vert_Z \int_{\{{d(x,y)\leq t}\}}  \frac{1}{t^n} d(x,y)^{\theta} \left|\zeta\left(\frac{d(x,y)}{t}\right)\right| d\mu_{g_B}(y). 
\end{align*}
Recall the definition $\Vert a_t\Vert_X = \sup\limits_{N\in\NN} e^{N\tau}|a_t(x)|'_{0;A_N}$ and using $d(x,y)/t\leq 1$, then
\begin{align}\label{eq:estc1}
 \Vert a_t\Vert_X \lesssim \Vert h \Vert_Z \int_{\{{d(x,y)\leq t}\}} \frac{t^\theta}{t^n} \left|\zeta\left(\frac{d(x,y)}{t}\right)\right| d\mu_{g_B}(y).
\end{align}
Analogously, we have
\begin{align}\label{eq:estc2}
  \sup_{N\in\NN} e^{N\tau} |b_t(x)|'_{1;A_R} \lesssim \Vert h \Vert_Z \int_{\{d(x,y)\leq t\}} \frac{t^{\theta-1}}{t^n} \left|\p\zeta\left(\frac{d(x,y)}{t}\right)\right| d\mu_{g_B}(y).
\end{align}
Since $Z\overset{d}\hookrightarrow X$, $\theta\in(0,1)$ is fixed, and $t\in(0,1]$, we have
\begin{align}\label{eq:estc3}
 t^{1-\theta} \Vert b_t\Vert_X & =  t^{1-\theta} \Vert h-a_t\Vert_X \notag\\
& \leq t^{1-\theta}\left(\Vert h\Vert_X + \Vert a_t\Vert_X\right) \notag\\
& \lesssim \Vert h \Vert_Z \left(1+ t\int_{\{d(x,y)\leq t\}} \left|\zeta\left(\frac{d(x,y)}{t}\right)\right| d\mu_{g_B}(y)\right).
\end{align}
Putting together estimates \eqref{eq:estc1}, \eqref{eq:estc2}, and \eqref{eq:estc3}, for $t\in (0,1]$,
\begin{align*}
t^{-\theta}K(t,h) & \leq  t^{-\theta} \left(\Vert a_t\Vert_X + t\Vert b_t\Vert_Y\right)\\
& = t^{-\theta}\left(\Vert a_t\Vert_X + t\Vert b_t\Vert_X+ t \sup_{N\in\NN} e^{N\tau}| b_t(x)|'_{1;A_N}  \right)\\
& \lesssim \Vert h \Vert_Z \\ 
&\quad +\Vert h\Vert_Z \int_{\{d(x,y)\leq t\}} \left( \left|\zeta\left(\frac{d(x,y)}{t}\right)\right| + \left|\p\zeta\left(\frac{d(x,y)}{t}\right)\right|\right)  d\mu_{g_B}(y)\\
& \lesssim \Vert h\Vert_Z.
\end{align*}
Thus, we obtain for $t\in(0,1]$,
\begin{align*}
 t^{-\theta}K(t,h) & \lesssim \Vert h\Vert_Z.
\end{align*}

To conclude $h\in (X,Y)_\theta$, it remains to show that $\limt{t}{0+} t^{-\theta}K(t,h)=0$.

Fix $x\in\mathbb{CH}^m$, let $t\in(0,1]$ and $y\in\{d(x,y)\leq t\}$, then $x,y\in A_N$ for some $N\in\NN$ with $d_{x,y}\geq{1/2}$. Noting $d(x,y)/t\leq 1$, we estimate
\begin{align*}
t^{-\theta} |a_t(x)| & \leq t^{-\theta} \frac{1}{C_t}\frac{1}{t^n}\int_{\{{d(x,y)\leq t}\}} |h(x)-h(y)| \left|\zeta\left(\frac{d(x,y)}{t}\right)\right| d\mu_{g_B}(y) \\
& = \frac{1}{C_t}\frac{1}{t^n}\int_{\{0<{d(x,y)\leq t}\}} \frac{d_{x,y}^{\theta}}{d_{x,y}^{\theta}} \frac{|h(x)-h(y)|}{d(x,y)^\theta} \left(\frac{d(x,y)}{t}\right)^\theta \left|\zeta\left(\frac{d(x,y)}{t}\right)\right| d\mu_{g_B}(y) \\
& \lesssim \frac{1}{C_t}\frac{1}{t^n}\int_{\{0<{d(x,y)\leq t}\}}  d_{x,y}^{\theta}\frac{|h(x)-h(y)|}{d(x,y)^\theta}\left|\zeta\left(\frac{d(x,y)}{t}\right)\right| d\mu_{g_B}(y)\\
& \leq \sup\limits_{\substack{x\neq y\in A_N, \\ d(x,y)\leq t}}d_{x,y}^{\theta}\frac{|h(x)-h(y)|}{d(x,y)^\theta} \left(\frac{1}{C_t}\int_{\{{d(x,y)\leq t}\}} \frac{1}{t^n}\left|\zeta\left(\frac{d(x,y)}{t}\right)\right|d\mu_{g_B}(y)\right)\\
& = \sup\limits_{\substack{x\neq y\in A_N, \\ d(x,y)\leq t}}d_{x,y}^{\theta}\frac{|h(x)-h(y)|}{d(x,y)^\theta}.
\end{align*}
As $t\to 0+$, by property \eqref{eq:lholder} that for $h\in Z:=\fh^{\theta}_\tau$,
\begin{align*}
 \limt{t}{0+} \sup\limits_{N\in\NN}e^{N\tau}\sup\limits_{\substack{x\neq y\in A_N,\\d(x,y)\leq t}}d^\theta_{x,y}\frac{|h(x)- h(y)|}{d(x,y)^\theta} = 0,
\end{align*}
hence
\begin{align*}
\limt{t}{0+} t^{-\theta}\Vert a_t\Vert_X = 0.
\end{align*}
Similarly, $\limt{t}{0+} t^{1-\theta}\Vert b_t\Vert_Y = 0$. Thus, $\limt{t}{0+} t^{-\theta}K(t,h)=0$.

Therefore, $h\in (X,Y)_\theta$ with $\Vert h\Vert_\theta\lesssim \Vert h\Vert_Z$. 
\end{proof}

Let $X$ be a Banach space. Consider a linear operator $A:D(A)\to X$ such that
\begin{align}\label{eq:resolv}
(0,\infty)&\subset\rho(A),\text{ and there exists a constant } C>0 \text{ such that}\\
&\quad \Vert \lambda R(\lambda, A)\Vert_{\mathcal L(X,X)}\leq C,\text{ for all } \lambda>0, \notag
\end{align}
where $\rho(A)$ is the resolvent set of $A$ and $R(\lambda, A)=(\lambda I - A)^{-1}$ is the resolvent operator of $A$. Note this is condition (3.1) in \cite{Lun09}.
\begin{lemma}\label{C2}
Let $m\in\NN$ and fix $\tau>m/2$. For $1\leq i \leq n=2m$, consider $A_i:=\p_i$ in geodesic normal coordinates with respect to $g_B$. Then for each $i$, $A_i:=D(A_i):\mathfrak h^1_\tau \to \mathfrak h^0_\tau $ satisfies condition \eqref{eq:resolv}.
\end{lemma}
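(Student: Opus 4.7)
The plan is to construct the resolvent explicitly via the one-dimensional ODE perspective. For $\lambda>0$ and $f\in\mathfrak{h}^0_\tau$, the equation $\lambda u-\partial_i u=f$ along each line parallel to the $e_i$-axis in the single geodesic normal coordinate chart (identified with $\RR^{2m}$) is a first-order linear ODE. Variation of parameters, together with the requirement that $u$ remain bounded as $x_i\to+\infty$, yields the unique bounded solution candidate
\begin{align*}
u(x)\;=\;\bigl(R(\lambda,\partial_i)f\bigr)(x)\;=\;\int_0^\infty e^{-\lambda t}\,f(x+te_i)\,dt.
\end{align*}
First I would verify that the integrand is dominated, so that $u$ is well-defined; differentiate under the integral to confirm $\lambda u-\partial_i u=f$ pointwise; and establish uniqueness by noting that the homogeneous solution $e^{\lambda x_i}$ violates any weighted bound compatible with $\tau>m/2$.

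Next I would estimate the weighted sup norm. For $x\in A_N$, using $|f(y)|\leq e^{-N'(y)\tau}\|f\|_{\mathfrak{h}^0_\tau}$, where $N'(y)$ denotes an annulus index for $y$, reduces the resolvent bound to showing that
\begin{align*}
\sup_{N\in\NN,\;\lambda>0}\;\lambda\,e^{N\tau}\!\!\int_0^\infty e^{-\lambda t}\,e^{-N'(x+te_i)\tau}\,dt\;<\;\infty.
\end{align*}
When the $i$-th coordinate of $x$ is nonnegative, the map $t\mapsto|x+te_i|$ is nondecreasing, so $N'(t)\geq N-C$ throughout, and the bound follows at once from $\int_0^\infty\lambda e^{-\lambda t}\,dt=1$. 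When the $i$-th coordinate of $x$ is negative, the coordinate line may pass near the origin; I would split the integration at the closest-approach time, apply the Euclidean triangle inequality $|x+te_i|\geq\bigl||x|-t\bigr|$ to bound $N'(t)\geq N-t-C$ in the near-origin regime, and use $\tau>m/2$ together with the resolvent factor $e^{-\lambda t}$ to absorb the weight loss in a $\lambda$-independent way.

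To place $u$ in $\mathfrak{h}^1_\tau$, I would read off $\partial_i u=\lambda u-f$ directly from the ODE, which inherits $\mathfrak{h}^0_\tau$-control from $u$ and $f$; for $\partial_j u$ with $j\neq i$, I would differentiate under the integral sign and estimate in the same fashion, combining the weighted bounds on $f$ with the interior factor $d_x$ that features in the definition of $\mathfrak{h}^1_\tau$.

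The principal obstacle is the weight bookkeeping in the negative-coordinate case: the coordinate line $\{x+te_i:t\geq0\}$ can traverse annuli $A_{N'}$ with $N'\ll N$, producing potentially large factors $e^{(N-N')\tau}$ in the integrand. Making the resulting estimate uniform in $\lambda>0$ is the crux of the proof and depends on the precise interplay between the weight parameter $\tau>m/2$ and the distance/volume geometry of $(\mathbb{CH}^m,g_B)$ in geodesic normal coordinates; once this is dispatched, the resolvent bound and the inclusion $D(A_i)=\mathfrak{h}^1_\tau$ follow as outlined above.
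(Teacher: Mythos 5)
Your overall strategy---constructing the resolvent via the explicit formula $u(x)=\int_0^\infty e^{-\lambda t}f(x+te_i)\,dt$ and estimating it in the weighted norms---coincides with the paper's. The paper's proof, however, asserts that in the single geodesic normal coordinate chart one has $0\leq x^i<\infty$ for all $i$. Under that hypothesis the ray $\{x+te_i:t\geq0\}$ always moves away from the origin, so the annulus index $N'(x+te_i)$ never drops more than an $O(1)$ amount below $N$, and the bound collapses at once to $\int_0^\infty\lambda e^{-\lambda t}\,dt=1$, uniformly in $\lambda$. You do not make that assumption---reasonably, since geodesic normal coordinates on $\mathbb{CH}^m$ genuinely range over all of $\RR^{2m}$---and so you flag the negative-coordinate case as the crux.

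That is the right place to focus, but the resolution you sketch does not close. Take $x$ with $x^i=-M$ and all other coordinates zero, so that $x\in A_N$ with $N\approx M$. Along the ray, $N'(x+te_i)\approx|t-M|$, so near $t\approx M$ the weight factor $e^{-N'\tau}$ is of order $1$; multiplied by the prefactor $\lambda e^{N\tau}$ and the kernel $e^{-\lambda t}$ at $t=M$, this contributes on the order of $\lambda e^{(\tau-\lambda)M}$, which is unbounded as $M\to\infty$ for any fixed $\lambda<\tau$. No choice of $\tau>m/2$ and no appeal to the kernel decay can absorb this loss: the translation semigroup $e^{tA_i}$ is not uniformly bounded on $\mathfrak h^0_\tau$ because the weight $e^{N\tau}$ grows at spatial infinity, and translation by $te_i$ can carry a far-out point back toward the origin. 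The step you describe as the "crux" and promise to dispatch is therefore a genuine gap, not a technicality; the paper's assertion that all coordinates are nonnegative is precisely what circumvents this in its own proof, and whether that assertion is actually available in geodesic normal coordinates is a question your proposal cannot sidestep.
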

\begin{proof}
Let $h\in \mathfrak h_\tau^0$. Write $x=(x^1,\ldots,x^n) \in \mathbb{CH}^m$ in the single geodesic normal coordinates, so $0\leq x^i<\infty$, $1\leq i\leq n$. Since $(\mathbb{CH}^m,g_B)$ is a complete manifold, there exists for each $1\leq i\leq n$ a unique geodesic $\gamma_i:\RR\to\mathbb{CH}^m$ with $\gamma_i(x^i)=x$ and $\dot\gamma(x^i) = \p/\p x^i|_x$. Then for each $\lambda>0$,
\begin{align*}
(R(\lambda, A_i) h)(x) := \int_{x^i}^\infty e^{-\lambda(t-x^i)} h(\gamma_i(t)) dt
\end{align*}
is the resolvent operator of $A_i$, $1\leq i\leq n$. In particular, $(0,\infty)\subset\rho(A)$.

If $i=1$, then $M-1\leq x^1\leq M$ since $x\in B_M\setminus B_{M-1}$ for some $M\in\NN$, which corresponds to $x\in A_N$ for some $N=N(M)\in\NN$. We then estimate
\begin{align*}
 e^{N\tau}|(\lambda R(\lambda, A_1)h)(x)| & \leq  e^{N\tau}\sums{K=N}{\infty}\int_{K-1}^{K+3} \lambda e^{-\lambda(t-x^1)} |h(\gamma_i (t))| dt\\
& \leq \sums{K=N}{\infty}\int_{K-1}^{K+3} \lambda e^{-\lambda(t-x^1)}e^{N\tau}e^{-K\tau} \Vert h\Vert_{0;\tau} dt\\
& \leq \Vert h\Vert_{0;\tau} \sums{K=N}{\infty}\int_{K-1}^{K+3} \lambda e^{-\lambda(t-x^1)} dt\\
& \leq 4\Vert h\Vert_{0;\tau} \int_{M-1}^{\infty} \lambda e^{-\lambda(t-x^1)} dt\\
& \leq 4\Vert h\Vert_{0;\tau} e^{-\lambda(M-1-x^1)}\\
& \leq C_1\Vert h\Vert_{0;\tau}.
\end{align*}
Similarly, for $2\leq i\leq n$,
\begin{align*}
e^{N\tau}|(\lambda R(\lambda, A_i)h)(x)| \leq C_i \Vert h\Vert_{0;\tau}.
\end{align*}
Taking the supremum over $N\in\NN$, then for each $\lambda>0$,
\begin{align*}
 \Vert \lambda R(\lambda, A_i)h\Vert_{0;\tau} \leq \max\limits_{1\leq i\leq n}C_i \Vert h\Vert_{0;\tau}.
\end{align*}

Therefore, $A_i$ satisfies condition \eqref{eq:resolv} for $1\leq i\leq n$.
\end{proof}

Lemma \ref{C2} ensures the interpolation theory in \cite{Lun09} applies, so from the basic step Lemma \ref{interpo1} we can conclude the following.
\begin{lemma}\label{interpo2}
Let $m\in\NN$ and fix $\tau>m/2$, then for $\theta\in(0,1)$ and $\ell\in\NN$ such that $\ell\theta \notin\NN$, we have the Banach space isomorphism
\begin{align*}
 (\mathfrak h^{0}_{\tau}, \mathfrak h^{\ell}_{\tau})_{\theta}\cong\mathfrak h^{\ell\theta}_{\tau}
\end{align*}
with equivalence of the respective norms.
\end{lemma}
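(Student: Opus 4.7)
The plan is to reduce the general case to the base case Lemma \ref{interpo1} by combining the reiteration theorem for continuous real interpolation with the operator-theoretic framework of Lunardi \cite[Chapter 3]{Lun09}, using the resolvent estimates from Lemma \ref{C2} as the key analytic input.

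For each coordinate direction $i$, Lemma \ref{C2} shows that $A_i = \partial_i$ satisfies condition \eqref{eq:resolv}. Lunardi's theorem on interpolation between $X$ and $D(A^\ell)$ for an operator $A$ satisfying \eqref{eq:resolv} then yields, for $\theta\in(0,1)$ with $\ell\theta\notin\NN$, the identification
\[
\bigl(\mathfrak h^0_\tau,\, D(A_i^\ell)\bigr)_\theta \;\cong\; D_{A_i}(\ell\theta,\infty)_c
\]
with equivalence of norms, where $D_{A_i}(\alpha,\infty)_c$ denotes the continuous (little) version of the trace-type interpolation space at regularity $\alpha$. The base case Lemma \ref{interpo1} identifies $D_{A_i}(\alpha,\infty)_c$ with the directional weighted little H\"older space at regularity $\alpha$ when $\alpha\in(0,1)$, and a reiteration argument promotes this identification to all non-integer $\alpha\in(0,\ell)$.

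To obtain the isotropic statement, I would show that $\mathfrak h^\ell_\tau$ is Banach space isomorphic to the finite intersection $\bigcap_{i=1}^{2m} D(A_i^\ell)$ equipped with the supremum of the directional norms. In the single normal coordinate chart on the homogeneous space $(\mathbb{CH}^m, g_B)$, the covariant derivative $\nabla$ and the coordinate partials $\partial_i$ differ by terms in the Christoffel symbols that are bounded \emph{uniformly} over the annular covering $\{A_N\}$; this is exactly the type of bookkeeping already used in the proof of Lemma \ref{bddop2}. Since continuous real interpolation commutes with finite intersections of domains of operators each satisfying \eqref{eq:resolv} (a standard $K$-functional argument), one concludes
\[
(\mathfrak h^0_\tau,\, \mathfrak h^\ell_\tau)_\theta \;\cong\; \bigcap_{i=1}^{2m} D_{A_i}(\ell\theta,\infty)_c \;\cong\; \mathfrak h^{\ell\theta}_\tau
\]
whenever $\ell\theta\notin\NN$, with equivalence of the respective norms.

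The main obstacle is the weighted bookkeeping when passing between the directional operator picture and the isotropic weighted H\"older scale: one must verify that the constants arising in Lunardi's reiteration theorem and in the intersection-interpolation principle remain uniform across the annuli $A_N$, and that the infinitesimal vanishing property \eqref{eq:lholder} (which distinguishes the little H\"older closure from the full H\"older space) is preserved at every step. These verifications follow a weighted variant of the $K$-functional estimates carried out in the proof of Lemma \ref{interpo1}, and no genuinely new ingredient is needed beyond what is already in place.
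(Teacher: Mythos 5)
Your high-level route matches what the paper (implicitly) does: both rest on the resolvent estimates of Lemma~\ref{C2}, the base case Lemma~\ref{interpo1}, and the machinery in \cite{Lun09}. However, the Banach-space isomorphism you assert, $\mathfrak h^\ell_\tau \cong \bigcap_{i=1}^{2m} D(A_i^\ell)$, is genuinely false once $\ell\geq 3$ (and $2m\geq 3$). With the domain convention $D(A_i)=\mathfrak h^1_\tau$, membership in $D(A_i^\ell)$ controls only mixed partials of the form $\partial_k\partial_i^{\,j}h$ with $j\leq \ell-1$; taking the intersection over $i$ still misses genuinely ``spread-out'' multi-indices. For instance $\partial_1\partial_2\partial_3 h$ is part of the $\mathfrak h^3_\tau$-norm but is not captured by $\bigcap_i D(A_i^3)$. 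So your step equating the two endpoint spaces, and the subsequent ``interpolation commutes with intersection'' step that relies on it, cannot stand as written.

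The standard fix, and what the paper's terse invocation of \cite{Lun09} is actually using, avoids characterizing the integer-order endpoint spaces exactly. One uses the resolvent estimates from Lemma~\ref{C2} (which give a holomorphic semigroup representation) to place $\mathfrak h^{k+1}_\tau$ in the class $J_{1/2}\cap K_{1/2}$ between $\mathfrak h^{k}_\tau$ and $\mathfrak h^{k+2}_\tau$, then runs the reiteration theorem inductively on $\ell$ starting from Lemma~\ref{interpo1}. Only the one-sided continuous embedding $\mathfrak h^\ell_\tau \hookrightarrow \bigcap_i D(A_i^\ell)$ is needed, combined with the directional trace-space characterizations $D_{A_i}(\alpha,\infty)_c$, to pin down the resulting non-integer interpolation space as $\mathfrak h^{\ell\theta}_\tau$. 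Put differently: you correctly identified the ingredients, but the endpoint identification should be replaced by the weaker, and true, sandwich/reiteration argument.
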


We can now prove Theorem \ref{interp}.
\begin{proof}[Proof of Theorem \ref{interp}]
Given $\mathfrak h^{k+\alpha}_{\tau}$, $\mathfrak h^{\ell+\beta}_\tau$, there exist $\theta_0,\theta_1\in(0,1)$ and $p \in\NN$ such that by Lemma \ref{interpo2},
\begin{align*}
(\mathfrak h^{0}_{\tau}, \mathfrak h^{p}_{\tau})_{\theta_0}\cong\mathfrak h^{p\theta_0}_{\tau}=\mathfrak h^{k+\alpha}_{\tau},\\
(\mathfrak h^{0}_{\tau}, \mathfrak h^{p}_{\tau})_{\theta_1}\cong\mathfrak h^{p\theta_1}_{\tau}=\mathfrak h^{\ell+\beta}_{\tau},
\end{align*}
with equivalence of the respective norms.

Suppose $(1-\theta)(k+\alpha)+\theta(\ell+\beta) \notin\NN$, then
\begin{align*}
 (\mathfrak h^{k+\alpha}_{\tau},\mathfrak h^{\ell+\beta}_{\tau})_\theta & \cong 
\left((\mathfrak h^{0}_{\tau}, \mathfrak h^{p}_{\tau})_{\theta_0}, (\mathfrak h^{0}_{\tau}, \mathfrak h^{p}_{\tau})_{\theta_1}\right)_\theta\\
& \cong (\mathfrak h^0_\tau, \mathfrak h^p_\tau)_{(1-\theta)\theta_0+\theta\theta_1} \quad\text{(by the Reiteration Theorem \cite{Trie78, Lun09})}\\
& \cong \mathfrak h^{(1-\theta)p\theta_0+\theta p\theta_1}_\tau \quad\text{(by Lemma \ref{interpo2})}\\
& = \mathfrak h^{(1-\theta)(k+\alpha)+\theta(\ell+\beta)}_\tau.
\end{align*}
with equivalence of the respective norms.
\end{proof}

\appendix
\section{Proof of Lemma \ref{R-gamma}}\label{appxA}
\begin{lemma}\label{entries}
 The non-zero components of the Riemann curvature tensor of $g_B$ are given by the following:
\begin{align*}
 \text{ if }& J(i)=j, \text{ then } R(e_i,e_j,e_j,e_i) =  -c; \\
 \text{ if }& J(i)\neq j, \text{ then } R(e_i,e_j,e_j,e_i) = -c/4 ;\\
\text{ if }& k<\ell, p<q,  J(k)=\ell,J(p)=q, \text{ then } \left\{\begin{array}{ccc}
 R(e_k,e_\ell,e_q,e_p) &=& -c/2, \\
 R(e_k,e_p,e_q,e_\ell) &=& -c/4,\\
 R(e_k,e_q,e_p,e_\ell) &=& c/4. 
\end{array} \right.
\end{align*}
\end{lemma}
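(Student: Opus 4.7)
The plan is to invoke the explicit closed-form expression for the curvature tensor of a K\"ahler manifold of constant holomorphic sectional curvature, and then evaluate the resulting formula term by term on the chosen unitary frame. Since $g_B$ is K\"ahler with holomorphic sectional curvature $-c$, it satisfies the standard identity
\begin{align*}
R(X,Y,Z,W) = -\frac{c}{4}\Big[&g(X,W)g(Y,Z) - g(X,Z)g(Y,W) \\
&+ g(JX,W)g(JY,Z) - g(JX,Z)g(JY,W) - 2g(JX,Y)g(JZ,W)\Big],
\end{align*}
which reduces the lemma to a case-by-case inspection of inner products using the rules $g(e_i,e_j) = \delta_{ij}$ and the prescribed action of $J$.

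For the diagonal cases $R(e_i, e_j, e_j, e_i)$, I would substitute $X=W=e_i$, $Y=Z=e_j$. If $J(i) = j$, say $i=2k-1, j=2k$, then $Je_i=e_j$ and $Je_j=-e_i$; the first term contributes $1$, the fourth $-(-1)=1$, and the fifth $-2(1)(-1)=2$, summing to $4$ in the bracket and producing $-c$. If $J(i)\neq j$, then $Je_i$ and $Je_j$ lie in complex lines disjoint from $\{e_i,e_j\}$, so only the very first term survives with value $1$, giving $-c/4$.

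For the three off-diagonal components with $k<\ell$, $p<q$, $J(k)=\ell$, $J(p)=q$ (and $\{k,\ell\}\cap\{p,q\}=\emptyset$), I would substitute the three quadruples directly: for $(e_k,e_\ell,e_q,e_p)$ only the last term $-2g(Je_k,e_\ell)g(Je_q,e_p) = -2(1)(-1)=2$ survives, yielding $-c/2$; for $(e_k,e_p,e_q,e_\ell)$ only the third term $g(Je_k,e_\ell)g(Je_p,e_q)=1$ survives, yielding $-c/4$; and for $(e_k,e_q,e_p,e_\ell)$ only the third term survives but as $g(Je_k,e_\ell)g(Je_q,e_p)=(1)(-1)=-1$, yielding $+c/4$. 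The remaining index patterns produce only orthogonal pairings after applying $J$, so no other independent components are nonzero once the symmetries of $R$ are accounted for.

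The only real obstacle is bookkeeping. The K\"ahler curvature identity mixes five bilinear expressions with alternating signs, and the action of $J$ on a real index introduces a sign depending on parity (sending $e_{2k}\mapsto -e_{2k-1}$), so one must track the sign of every Kronecker $\delta$ that appears. No conceptual difficulty arises beyond carefully tabulating which of the five terms has both of its $g(\cdot,\cdot)$-factors nonzero in each case, and which pairs of frame vectors carry a minus sign from $J$.
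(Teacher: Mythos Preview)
Your argument is correct. You invoke the full closed-form curvature tensor identity for a K\"ahler manifold of constant holomorphic sectional curvature and then evaluate each case; all of your term-by-term computations check out against the stated values, and the vanishing of the remaining components indeed follows because at least one factor in each of the five bilinear terms is an inner product of orthogonal frame vectors.

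The paper takes an equivalent but slightly different route: it starts from O'Neill's sectional-curvature formula $K(X,Y)=-\tfrac{c}{4}(1+3g(JX,Y)^2)$ together with the K\"ahler symmetry $R(X,Y,W,Z)=R(X,Y,JW,JZ)$, and then recovers the mixed components such as $R(e_1,e_2,e_4,e_3)$ via the first Bianchi identity; the vanishing of the leftover components is argued by a polarization computation (evaluating $R(e_1,e_2+e_3,e_2+e_3,e_1)$ two ways). Your approach avoids both the Bianchi step and the polarization trick, handling all index patterns uniformly from a single formula, at the cost of having to track the five signed terms carefully. The paper's approach is perhaps more geometric in flavor but requires assembling several identities; yours is more algebraic and direct. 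Either is perfectly adequate for this lemma.
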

\begin{proof}
On a K\"{a}hler manifold $(M,g)$ of constant holomorphic sectional curvature $-c$ ($c>0$), e.g., $(\mathbb{CH}^m,g_B)$ or $(M^n,g_0)$, if $X,Y$ are orthonormal vectors in $T_p M$, then the Riemannian sectional curvature $K(X,Y)$ at $p$ is given by O'Neill's formula (\cite{K-N2})
\begin{align*}
K(X,Y) = -\frac{c}{4}\left(1+3g(JX,Y)^2\right).
\end{align*}
For $X,Y,Z,W\in T_p M$, we also have
\begin{align*} R(X,Y,W,Z)&=R(X,Y,JW,JZ)=R(JX,JY,W,Z)\\
&=R(JX,JY,JW,JZ).
\end{align*}

We compute
\begin{align*}
R(e_1,e_2,e_2,e_1)& = K(e_1,e_2) = -\frac{c}{4}\left(1+3g(Je_1,e_2)^2\right)\\
& = -\frac{c}{4}\left(1+3g(e_2,e_2)^2\right) = -c,\\
R(e_1,e_3,e_3,e_1) & = K(e_1,e_3) = -\frac{c}{4}\left(1+3g(Je_1,e_3)^2\right)\\
& = -\frac{c}{4}\left(1+3g(e_2,e_3)^2\right) = -\frac{c}{4},\\
R(e_1,e_3,e_4,e_2) & = R(e_1,e_3,Je_4,Je_2) = R(e_1,e_3,-e_3,-e_1)\\
& = -\frac{c}{4},\\ 
R(e_1,e_4,e_3,e_2) & = R(e_1,e_4,Je_3,Je_2) = R(e_1,e_4,e_4,-e_1)\\
& = \frac{c}{4}.
\end{align*}
Using the first Bianchi identity, we compute
\begin{align*}
 R(e_1,e_2,e_4,e_3) & = -R(e_1,e_4,e_3,e_2)-R(e_1,e_3,e_2,e_4)\\
& = -R(e_1,e_4,Je_3,Je_2)-R(e_1,e_3,Je_2,Je_4)\\
& = -R(e_1,e_4,e_4,-e_1)-R(e_1,e_3,-e_1,-e_3)\\
& = R(e_1,e_4,e_4,e_1)+R(e_1,e_3,e_3,e_1)\\
& = -\frac{c}{2},
\end{align*}
Noting the patterns in the above computation, we then quickly obtain the other non-zero components. For example, $R(e_5,e_6,e_6,e_5)=-c$, $R(e_1,e_5,e_5,e_1)=R(e_1,e_5,e_6,e_2)=-R(e_1,e_6,e_5,e_2)=-c/4$, $R(e_1,e_2,e_6,e_5)=-c/2$, etc.

All the remaining components are zero, for example,
\begin{align*}
R(e_1,e_2+e_3,e_2+e_3,e_1) & = R(e_1,e_2,e_2,e_1)+R(e_1,e_3,e_3,e_1)\\
&\quad+R(e_1,e_2,e_3,e_1)+R(e_1,e_3,e_2,e_1)\\
& = -\frac{5c}{4}+2R(e_1,e_2,e_3,e_1),
\end{align*}
\begin{align*}
R(e_1,e_2+e_3,e_2+e_3,e_1)&=2R\left(e_1,\frac{e_2+e_3}{\sqrt 2},\frac{e_2+e_3}{\sqrt 2},e_1\right)\\
&=-2\frac{c}{4}\left(1+3g\left(J\left(\frac{e_2+e_3}{\sqrt 2}\right),e_1\right)^2\right)\\
&=-2\frac{c}{4}\left(1+3g\left(\frac{-e_1+e_4}{\sqrt 2},e_1\right)^2\right)\\
&=-\frac{c}{2}(1+\frac{3}{2})=-\frac{5c}{4}.\\
\end{align*}
So $R(e_1,e_2,e_3,e_1)=0$.
\end{proof}

Lemma \ref{R-gamma} is a direct consequence of Lemma \ref{entries}.
\begin{proof}[Proof of Lemma \ref{R-gamma}.]
In the $\gamma$-basis defined in Section \ref{subsect3.2} and using Lemma \ref{entries}, for example,
\begin{align*}
\left\langle R_\gamma\left(\frac{e_1e_1}{2}\right), \frac{e_1e_1}{2}\right\rangle & = R(e_1,e_1,e_1,e_1)=0,\\
\left\langle R_\gamma\left(\frac{e_1e_1}{2}\right), \frac{e_2e_2}{2}\right\rangle & = R(e_1,e_2,e_2,e_1)=-c,\\
\left\langle R_\gamma\left(\frac{e_1e_1}{2}\right), \frac{e_3e_3}{2}\right\rangle & = R(e_1,e_3,e_3,e_1)=-\frac{c}{4},\end{align*}\begin{align*}
\left\langle R_\gamma\left(\frac{e_1e_2}{\sqrt2}\right), \frac{e_1e_2}{\sqrt2}\right\rangle & = R(e_1,e_2,e_1,e_2)=c,\\
\left\langle R_\gamma\left(\frac{e_1e_2}{\sqrt 2} \right), \frac{e_3e_4}{\sqrt 2} \right\rangle & = R(e_1,e_3,e_4,e_2)+R(e_1,e_4,e_3,e_2) = 0,\\
\left\langle R_\gamma\left(\frac{e_1e_3}{\sqrt 2} \right), \frac{e_2e_4}{\sqrt 2}\right\rangle & =  R(e_1,e_2,e_4,e_3) + R(e_1,e_4,e_2,e_3) = -\frac{3c}{4},\\
\left\langle R_\gamma\left(\frac{e_1e_4}{\sqrt 2} \right), \frac{e_2e_3}{\sqrt 2} \right\rangle & = R(e_1,e_2,e_3,e_4) + R(e_1,e_3,e_2,e_4) = \frac{3c}{4}.
\end{align*}
The remaining entries of $R_\gamma$ are computed analogously.
\end{proof}

\section{Stability Theorem}\label{appxB}
We use the following version of Simonett's Stability Theorem. Please see \cite{GIK02,Kno09} for a more general version of the theorem, and \cite{Sim95} for the most general statement.
\begin{thm}[Simonett]\label{stab}
Assume the following conditions hold:
\bn
\item[(B1)] $\XX_1\overset{d}\hookrightarrow \XX_0$ and $\EE_1\overset{d}\hookrightarrow \EE_0$ are continuous and dense inclusions of Banach spaces. For fixed $0<\beta<\alpha<1$, $\XX_\alpha$ and $\XX_\beta$ are continuous interpolation spaces corresponding to the inclusion $\XX_1\overset{d}\hookrightarrow \XX_0$.
\item[(B2)] Let
\begin{align}\label{eq:quasiparabeq}
 \frac{\p}{\p t} g = Q(g)g
\end{align}
be an autonomous quasilinar parabolic equation for $t\geq 0$, with $Q(\cdot)\in C^k({\GG_\beta},\mathcal L(\XX_1,\XX_0))$ for some positive integer $k$ and some open set $\GG_\beta \subset \XX_\beta$.
\item[(B3)] For each $\hat g\in\GG_\beta$, the domain $D(Q(\hat g))$ contains $\XX_1$, and there exists an extension $\hat Q(\hat g)$ of $Q(\hat g)$ to a domain $D(\hat Q(\hat g))$ containing $\EE_1$.
\item[(B4)] For each $\hat g\in \GG_\alpha:=\GG_\beta\cap \XX_\alpha$, $\hat Q(\hat g)\in \mathcal L(\EE_1,\EE_0)$ generates a strongly continuous analytic semigroup on $\mathcal L(\EE_0,\EE_0)$.
\item[(B5)] For each $\hat g\in \GG_\alpha$, $Q(\hat g)$ agrees with the restriction of $\hat Q(\hat g)$ to the dense subset $D(Q(\hat g))\subset\XX_0$.
\item[(B6)] Let $(\EE_0,D(\hat Q(\cdot)))_\theta$ be the continuous interpolation space. Define the set $(\EE_0, D(\hat Q(\cdot)))_{1+\theta}:=\{x\in D(\hat Q(\cdot))):D(\hat Q(\cdot))(x)\in (\EE_0, D(\hat Q(\cdot))_\theta\}$ endowed with the graph norm of $\hat Q(\cdot)$ with respect to $(\EE_0,D(\hat Q(\cdot)))_\theta$. Then $\XX_0\cong (\EE_0,D(\hat Q(\cdot)))_\theta$ and $\XX_1\cong (\EE_0,D(\hat Q(\cdot)))_{1+\theta}$ for some $\theta\in(0,1)$.
\item[(B7)] $\EE_1\overset{d}\hookrightarrow \XX_\beta \overset{d}\hookrightarrow \EE_0$ with the property that there are constants $C>0$ and $\theta\in(0,1)$ such that for all $x\in\EE_1$, one has
\begin{align*}
 \Vert x\Vert_{\XX_\beta} \leq C\Vert x \Vert_{\EE_0}^{1-\theta} \Vert x \Vert_{\EE_1}^\theta.
\end{align*}
\en

For each $\alpha\in(0,1)$, let $g_0\in\GG_\alpha$ be a fixed point of equation \eqref{eq:quasiparabeq}. Suppose that the spectrum of the linearized operator $DQ|_{g_0}$ is contained in the set $\{z\in\CC:\Re(z)\leq -\varepsilon\}$ for some constant $\varepsilon >0$. Then there exist constants $\omega\in(0,\varepsilon)$ and $d_0, C_\alpha>0$, $C_\alpha$ independent of $g_0$, such that for each $d\in(0,d_0]$, one has
\begin{align*}
 \Vert \tilde g(t)-g_0 \Vert_{\XX_1} \leq \frac{C_\alpha}{t^{1-\alpha}} e^{-\omega t}\Vert \tilde g(0)-g_0\Vert_{\XX_\alpha}
\end{align*}
for all solutions $\tilde g(t)$ of equation \eqref{eq:quasiparabeq} with $\tilde g(0)\in B(\XX_\alpha,g_0,d)$, the open ball of radius $d$ centered at $g_0$ in the space $\XX_\alpha$, and for all $t\geq 0$.
\end{thm}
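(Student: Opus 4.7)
This is Simonett's theorem, a standard result in the maximal regularity theory of quasilinear parabolic equations, so the plan is to outline the argument leading to the asserted decay estimate. The overall strategy is to linearize around the fixed point $g_0$, solve the nonlinear equation by a contraction mapping in a weighted trajectory space, and read off the decay from the smoothing estimates of the analytic semigroup generated by the linearization.

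First I would write $\tilde g(t)=g_0+h(t)$ and expand
\begin{align*}
Q(g_0+h)(g_0+h) = A h + N(h), \qquad A:=DQ|_{g_0},
\end{align*}
where the nonlinearity $N(h)$ is at least quadratic in $h$ (and in the derivatives appearing in $Q$) thanks to the smoothness hypothesis (B2). The Cauchy problem becomes
\begin{align*}
\tfrac{\p}{\p t}h = Ah+N(h),\qquad h(0)=\tilde g(0)-g_0.
\end{align*}
By (B3)--(B6), the operator $A$ extends to an operator $\hat A$ on $\EE_0$ that generates a strongly continuous analytic semigroup $\{e^{t\hat A}\}_{t\geq 0}$, and the spectral assumption $\spec(A)\subset\{\Re z\leq -\varepsilon\}$ upgrades this to exponential decay: for every $\omega\in(0,\varepsilon)$ and $t>0$, standard analytic-semigroup theory in continuous interpolation spaces yields the smoothing bounds
\begin{align*}
\Vert e^{tA}\Vert_{\mathcal L(\XX_\alpha,\XX_1)}\leq \tfrac{C}{t^{1-\alpha}}e^{-\omega t},\qquad \Vert e^{tA}\Vert_{\mathcal L(\XX_\alpha,\XX_\beta)}\leq \tfrac{C}{t^{\beta-\alpha}}e^{-\omega t},
\end{align*}
where the identifications $\XX_0\cong(\EE_0,D(\hat A))_\theta$ and $\XX_1\cong(\EE_0,D(\hat A))_{1+\theta}$ from (B6) are what allow one to transfer semigroup bounds from $\EE_0$ to the $\XX$-scale.

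Next I would pass to the integral (Duhamel) formulation
\begin{align*}
h(t) = e^{tA}h(0) + \int_0^t e^{(t-s)A}N(h(s))\,ds
\end{align*}
and solve it by contraction in a weighted space of trajectories
\begin{align*}
\Sigma(d) := \bigl\{h\in C([0,\infty);\XX_\alpha)\cap C((0,\infty);\XX_1): \Vert h\Vert_\Sigma\leq d\bigr\},
\end{align*}
normed by
\begin{align*}
\Vert h\Vert_\Sigma := \sup_{t>0}e^{\omega t}\Vert h(t)\Vert_{\XX_\alpha} + \sup_{t>0}e^{\omega t}t^{1-\alpha}\Vert h(t)\Vert_{\XX_1}.
\end{align*}
The map $\Psi(h)(t):=e^{tA}h(0)+\int_0^t e^{(t-s)A}N(h(s))\,ds$ takes a small ball of $\Sigma(d)$ into itself and is a strict contraction once $d$ and $\Vert h(0)\Vert_{\XX_\alpha}$ are small enough; its unique fixed point is the sought solution $h$, and the claimed estimate $\Vert\tilde g(t)-g_0\Vert_{\XX_1}\leq C_\alpha t^{\alpha-1}e^{-\omega t}\Vert\tilde g(0)-g_0\Vert_{\XX_\alpha}$ is exactly the $\XX_1$-component of the $\Sigma$-norm bound for the fixed point.

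The main obstacle, and the technical heart of Simonett's argument, lies in the nonlinear estimate. One must bound $\Vert N(h_1)-N(h_2)\Vert_{\XX_0}$ by a product of $\Vert h_i\Vert_{\XX_\beta}$ (or $\XX_\alpha$) norms times $\Vert h_1-h_2\Vert_{\XX_\beta}$ with a superlinear factor, and then balance this against the Duhamel kernel $(t-s)^{\alpha-1}$ and the time weight $s^{\alpha-1}$ by a beta-function identity of the form $\int_0^t(t-s)^{\alpha-1}s^{\alpha-1}e^{-\omega(t-s)}\,ds\lesssim t^{\alpha-1}e^{-\omega t/2}$. It is precisely the interpolation inequality (B7), $\Vert x\Vert_{\XX_\beta}\leq C\Vert x\Vert_{\EE_0}^{1-\theta}\Vert x\Vert_{\EE_1}^\theta$, that supplies the superlinear gain needed to close the contraction, while (B2) with $k\geq 1$ guarantees the requisite smoothness of $N$ on the relevant open set $\GG_\beta\subset\XX_\beta$.
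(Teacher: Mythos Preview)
The paper does not give a proof of this theorem at all: it is quoted in Appendix~B as ``Simonett's Stability Theorem,'' attributed to Simonett, and the reader is sent to \cite{GIK02, Kno09} and \cite{Sim95} for the argument. So there is nothing in the paper to compare your sketch against. Your outline is a reasonable sketch of the maximal-regularity approach behind such stability results (linearize at the fixed point, use the analytic semigroup smoothing estimates on the interpolation scale, and close a contraction via Duhamel in an exponentially weighted trajectory space), but be aware that Simonett's actual proof in \cite{Sim95} is somewhat more delicate in how it organizes the functional-analytic framework (in particular the role of the pair $(\EE_0,\EE_1)$ versus $(\XX_0,\XX_1)$ and the way (B6), (B7) enter), so if you want a rigorous proof you should consult that reference rather than rely on this heuristic sketch.
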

\bibliography{hwu_bib}
\bibliographystyle{amsplain}

\end{document}